\documentclass{amsart}

\RequirePackage{amspaperstyle}



\newcommand{\classH}{\mathcal{H}}

\newcommand{\minht}{\mathrm{minht}}
\newcommand{\Sob}{\mathcal{S}}

\newcommand{\ugen}{\mathrm{z}}
\newcommand{\dioeps}{\psi} 

\newcommand{\wholenorm}[1]{\mathbf{N}_{#1}}

\newcommand{\wedgespace}[1]{\mathbf{V}_{#1}}
\newcommand{\wedgevec}[1]{\mathbf{v}_{#1}}
\newcommand{\wedgerep}[1]{\rho_{#1}}
\newcommand{\wedgeorbit}[1]{\eta_{#1}}

\newcommand{\compactcont}{C_c}
\newcommand{\compactsmooth}{C_c^\infty}

\newcommand{\minobs}{\vartheta}

\newcommand{\dimemb}{N}

\newcommand{\kappaundistcompl}{{\kappa_1}}
\newcommand{\kappainjradius}{{\kappa_2}}
\newcommand{\kappaeffgen}{{\kappa_3}}
\newcommand{\kappaeffav}{{\kappa_{4}}}

\newcommand{\kappacomm}{\kappa_{6}}

\newcommand{\kappaalminvG}{\kappa_{9}}

\newcommand{\Adiam}{\mathrm{A}_{1}}
\newcommand{\ASobcont}{{\mathrm{A}_2}}
\newcommand{\AeffavT}{{\mathrm{A}_{3}}}
\newcommand{\AeffavcorH}{{\mathrm{A}_{4}}}
\newcommand{\Adiocond}{\AeffavcorH}
\newcommand{\Aeffclos}{{\mathrm{A}_{5}}}
\newcommand{\Aeffclosres}{{\mathrm{A}_{6}}}
\newcommand{\Aeffcloscor}{{\mathrm{A}_{7}}}

\newcommand{\Atransres}{\mathrm{A}_{9}}

\newcommand{\kappaSobineq}{{d_0}}

\newcommand{\Cdiam}{{\mathrm{C}_{1}}}
\newcommand{\Ceffav}{{\mathrm{C}_{2}}}
\newcommand{\Ceffavgeom}{{\mathrm{C}_{3}}}
\newcommand{\CeffavcorH}{\mathrm{C}_{4}}
\newcommand{\Ceffcloscor}{\mathrm{C}_{5}}
\newcommand{\Ctranspts}{{\mathrm{C}_{6}}}




\newcounter{C}


\newcommand{\tempered}{K}

\begin{document}

\title[Effective equidistribution of orbits under semisimple groups]{Effective equidistribution of orbits under semisimple groups on congruence quotients}

\author{Andreas Wieser}
\address{Einstein Institute of Mathematics, Edmond J. Safra Campus, Givat Ram,
The Hebrew University of Jerusalem, Jerusalem 91904, Israel}
\email{andreas.wieser@mail.huji.ac.il}
\thanks{This project was supported by the ERC grant HomDyn, ID 833423 as well as the SNF Doc. Mobility grant 195737.}

\date{\today}

\begin{abstract}
We prove an effective equidistribution result for periodic orbits of semisimple groups on congruence quotients of an ambient semisimple group. 
This extends previous work of Einsiedler, Margulis and Venkatesh. 
The main new feature is that we allow for periodic orbits of semisimple groups with nontrivial centralizer in the ambient group. 
Our proof uses crucially an effective closing lemma from work with Lindenstrauss, Margulis, Mohammadi, and Shah.
\end{abstract}

\maketitle

\section{Introduction}

Let $G$ be a Lie group and let $\Gamma < G$ be a lattice.
In her seminal work, Ratner \cite{ratner91-measure} 
classified probability measures on $X = \lquot{\Gamma}{G}$ invariant and ergodic under a one-parameter unipotent subgroup of $G$.
The classification asserts that any such measure is \emph{homogeneous} (or \emph{algebraic}), i.e.~it is the invariant probability measure $\mu_{xL}$ on a periodic orbit $xL$ of a closed subgroup $L \subset G$.
Subsequent work by Mozes and Shah \cite{MozesShah} established the same type of rigidity for limits of such measures using the linearization technique by Dani and Margulis \cite{DaniMargulis93}.

\begin{theorem}[{\cite{MozesShah}}]
Let $\mu_i$ be a sequence of probability measures on $X$ so that each $\mu_i$ is invariant and ergodic under a one-parameter unipotent subgroup.
Suppose that $\mu_i\to \mu$ for a probability measure $\mu$ in the weak${}^\ast$-topology. Then $\mu$ is homogeneous.
\end{theorem}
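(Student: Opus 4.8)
The plan is to combine Ratner's measure classification with the linearization technique of Dani and Margulis, following Mozes and Shah.

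\emph{Step 1 (reduction via Ratner).} By Ratner's theorem, each $\mu_i$ is the homogeneous measure $\mu_{x_iH_i}$ on a periodic orbit $x_iH_i$ of a closed subgroup $H_i$ that is generated by the unipotent one-parameter subgroups it contains and that contains the one-parameter unipotent subgroup $W_i=\{u_i(t)\}$ under which $\mu_i$ is ergodic; since a nontrivial average of distinct homogeneous measures cannot be ergodic under a connected group, $H_i$ is connected. As $\dim H_i$ takes finitely many values, pass to a subsequence on which $\dim H_i=d$. Since $\mu$ is a probability measure, $\mathrm{supp}\,\mu\neq\emptyset$; fix $x_0\in\mathrm{supp}\,\mu$. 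Weak${}^\ast$-convergence forces every neighbourhood of $x_0$ to meet $\mathrm{supp}\,\mu_i$ for all large $i$, so along a further subsequence we may choose $x_i\in\mathrm{supp}\,\mu_i=x_iH_i$ with $x_i\to x_0$, together with lifts $g_i\to g_0$ in $G$.

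\emph{Step 2 (the limit is unipotent-invariant).} Rescaling time does not change ergodicity, so we may normalize the generator $X_i\in\mathrm{Lie}(H_i)$ of $W_i$ so that $\|X_i\|=1$; after a subsequence $X_i\to X$, still of unit norm and nilpotent, so $W:=\exp(\mathbb{R}X)$ is a nontrivial one-parameter unipotent subgroup. For $f\in C_c(X)$ and $t\in\mathbb{R}$, $\int f(y\,u_i(t))\,d\mu_i(y)=\int f\,d\mu_i$; letting $i\to\infty$ and using $u_i(t)\to\exp(tX)$ together with $\mu_i\to\mu$ shows that $\mu$ is $W$-invariant. (This, together with uniform non-divergence of unipotent-invariant measures, is also what guarantees that no mass escapes, consistent with $\mu$ being a probability measure.)

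\emph{Step 3 (linearization and conclusion).} Put $V=\bigwedge^d\mathfrak{g}$ and $\rho=\bigwedge^d\mathrm{Ad}\colon G\to\mathrm{GL}(V)$. The line $\ell_i:=\bigwedge^d\mathrm{Lie}(H_i)\subseteq V$ is fixed by $\rho(H_i)$, since a group generated by unipotents has no nontrivial character; and because $x_iH_i$ is periodic, $\Gamma\cap g_iH_ig_i^{-1}$ is a lattice in the unipotent-generated group $g_iH_ig_i^{-1}$, so one may pick a representative $p_i$ of $\rho(g_i)\ell_i$ whose $\Gamma$-orbit $\rho(\Gamma)p_i$ is discrete in $V$. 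The key input, due to Dani and Margulis, is that the whole collection of such representation vectors, over all periodic orbits of $d$-dimensional unipotent-generated subgroups, is discrete in $V$; combined with uniform non-divergence, which keeps the vectors $p_i$ in a fixed compact set because the $x_i$ stay near $x_0$, this forces the conjugated Lie algebras $\mathrm{Ad}(g_i)\,\mathrm{Lie}(H_i)$ to be eventually constant. Hence the closed connected subgroups $\widetilde H_i:=g_iH_ig_i^{-1}$ are eventually equal to a single closed connected subgroup $\widetilde H$, which is generated by unipotents and has $\Gamma\cap\widetilde H$ as a lattice, so that $P:=\Gamma\backslash\Gamma\widetilde H$ is a periodic orbit. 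For $i$ large, $\mathrm{supp}\,\mu_i=x_iH_i=Pg_i$ and $\mu_i$ is the push-forward of the Haar probability measure on $P$ under right translation by $g_i$; letting $i\to\infty$ gives $\mu=\mu_{x_0H}$, where $H:=g_0^{-1}\widetilde H g_0\in\classH$. In particular $\mu$ is homogeneous.

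\emph{Main obstacle.} Steps 1 and 2 are soft. The real content sits in Step 3: the Dani--Margulis theorem that the representation vectors of periodic orbits in $\bigwedge^d\mathfrak{g}$ form a discrete set, together with the accompanying control of the conjugating elements $g_i$ (via uniform non-divergence) and of the normalization of the $p_i$, which are what pin down the single limit subgroup $H$.
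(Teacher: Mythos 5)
The paper does not prove this theorem---it is quoted as a result of Mozes and Shah---so there is no internal proof to compare against; I assess your argument on its own terms. Steps 1 and 2 are standard and essentially fine. Step 3, however, contains a genuine error: the conclusion that the conjugated subgroups $\widetilde{H}_i$ eventually stabilize and that $\mu=\mu_{x_0 H}$ with $H=g_0^{-1}\widetilde{H}g_0$ is simply false. Take $G=\SL_2(\R)$, $\Gamma=\SL_2(\Z)$, and let $\mu_i$ be the uniform probability measures on closed horocycle orbits of period $T_i\to\infty$. Here $H_i=U$ for every $i$ (so there is no issue of changing dimension or changing $H_i$), the orbits pass through any fixed compact set, yet $\mu_i\to m_X$, the Haar measure on $X$, which is invariant under all of $G$. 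The limit is homogeneous, but under a group of strictly larger dimension than $\lim H_i$; no choice of $g_0$ makes $\mu$ a translate of the Haar measure on a $U$-periodic orbit.

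The specific place where the argument breaks is the assertion that ``uniform non-divergence \dots keeps the vectors $p_i$ in a fixed compact set because the $x_i$ stay near $x_0$.'' The $p_i$ are the primitive integral vectors along $\bigwedge^{d}\mathrm{Lie}(\widetilde{H}_i)$; their norms are the heights $\height(\widetilde{H}_i)$, an intrinsic arithmetic complexity of the subgroups $\widetilde{H}_i$, and these are \emph{not} controlled by the orbit meeting a compact set. In the horocycle example the relevant integral vectors are $\left(\begin{smallmatrix}-pq & p^2\\ -q^2 & pq\end{smallmatrix}\right)$ for coprime $(p,q)$, whose norms go to infinity along the sequence even though all orbits pass near the same base point. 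Non-divergence controls quantities like $\inf_t\|\rho(g_i u_t)^{-1}p_i\|$ from \emph{below}, not $\|p_i\|$ from above. Relatedly, the ``discreteness of the whole collection of representation vectors over all periodic orbits of a fixed dimension'' is not what Dani and Margulis established; they proved countability of the class of such subgroups and discreteness of $\Gamma p_H$ for each fixed $H$, which is weaker than what you invoke.

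The correct Mozes--Shah strategy is structurally different. After Step 2, apply Ratner's classification to the limit $\mu$ itself (now $W$-invariant) and study its $W$-ergodic decomposition into homogeneous components $\mu_{yL}$. Choose $L$ so that the set of $y$ whose ergodic component is supported on a closed $L$-orbit has positive $\mu$-measure, with $L$ appropriately minimal. The Dani--Margulis linearization applied to the singular set $N(L,W)$ then shows that, because the $\mu_i$ concentrate (in the weak${}^\ast$ limit) near a polynomial image of $N(L,W)$, the measures $\mu_i$ are for large $i$ already supported inside a single translate $\Gamma\backslash\Gamma\,\widetilde{L}\,g_i$; this forces $\mu$ to be invariant under $L$ itself, and then one bootstraps to identify $\mu$ as one homogeneous measure. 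The mechanism pins down the structure of the \emph{limit} measure's ergodic components, not the algebraic data $H_i$ of the approximating measures. Your proof needs to be reorganized along these lines; the assertion that $H_i$ and $g_i$ eventually stabilize should be dropped.
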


Giving quantitative equidistribution results in homogeneous dynamics, in particular in the context of orbits of semisimple groups, has been a major theme in recent years, and appears prominently in Margulis list of open problems in homogeneous dynamics \cite[\S1.3]{Margulis-Problems}. In particular the problem of quantifying the Mozes-Shah Theorem has attracted considerable attention.
The progress is arguably most complete for orbits of horospherical groups. 
Here, Margulis' thickening technique \cite{Margulis-Thesis} paired with effective mixing for diagonalizable flows can give a polynomially effective rate -- see for example \cite{KleinbockMargulis-Thickening}.
Other known instances with effective rates include the equidistribution of Hecke points (see e.g.~\cite{ClozelOhUllmo}) and of translates of a fixed orbit of a symmetric group (see e.g.~\cite{BenoistOh}).

In this article, we consider periodic orbits of semisimple groups in the following arithmetic setup:
\begin{itemize}
\item $G = \G(\R)$ where $\G$ is a connected semisimple $\Q$-group, and $G^+$ the identity component of $G$.
\item $\Gamma \subset \G(\Q)$ is a congruence subgroup.
\item $H < G$ is a connected semisimple subgroup without compact factors.
\end{itemize}
We now turn to phrase the main theorem of the current article; important preceeding results will be discussed below in \S\ref{sec:history}.
We denote by $\classH$ the family of connected $\Q$-subgroups of $\G$ whose radical is unipotent.
By a theorem of Borel and Harish-Chandra \cite{BorelHarishChandra}, for any $\BM\in \mathcal{H}$ the orbit $\Gamma \BM(\R)$ is periodic i.e.~$\Gamma \cap \BM(\R)$ is a lattice in $\BM(\R)$. 
Translates of these orbits with small volume form natural obstacles to equidistribution of periodic $H$-orbits.
Instead of the volume, we shall use its arithmetic counterpart -- the discriminant $\disc(\cdot)$ -- to measure the complexity of orbits; see \S\ref{sec:discriminant}.
When $X$ is noncompact, one can construct explicitly (see \S\ref{sec:sob norms}) a proper map $\height :X \to [0,\infty)$; for a subset $B \subset X$ we write $\minht(B) = \inf\{\height(x): x\in B\}$ (note that this height has nothing to do with any arithmetic height which is closely related to our notion of discriminant).

The following is our main theorem; the main innovation is a different, general treatment of the centralizer of $H$.

\begin{theorem}[Effective equidistribution of semisimple orbits]\label{thm:main}
There exist $\delta >0$ and $d \geq 1$ depending on $G$ and $H$ with the following properties.
Let $xH = \Gamma g H$ be a periodic orbit in $X$ and let $\mu_{xH}$ be the Haar probability measure on $xH$. 
Suppose that $D >0$ is such that
\begin{align}\label{eq:diophantineassump thm}
\disc(\Gamma \BM(\R)g) \geq D
\end{align}
for all orbits $\Gamma \BM(\R) g$ containing $xH$ where $\BM \in \classH$ is a proper $\Q$-subgroup of $\G$.
Then
\begin{align*}
\Big| \int_{xH} f \de \mu_{xH} - \int_{xG^+} f \de \mu_{xG^+}\Big| 
\ll_{G,H,\Gamma} D^{-\delta}\minht(xH) \Sob_d(f)
\end{align*}
where $\Sob_d$ is an $L^2$-Sobolov norm of degree $d$ (cf.~\S\ref{sec:sob norms}).
\end{theorem}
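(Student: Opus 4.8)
The plan is to run an effective variant of the Dani--Margulis linearization scheme, using the effective closing lemma from the joint work with Lindenstrauss, Margulis, Mohammadi, and Shah as the engine that converts "the orbit $xH$ does not equidistribute" into "there is a proper $\Q$-subgroup $\BM \in \classH$ whose translated orbit has small discriminant," contradicting \eqref{eq:diophantineassump thm}. Concretely, I would argue by contrapositive: suppose that for a suitable Sobolev degree $d$ and exponent $\delta$ there is an $f$ with

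First I would set up the dynamical input. Since $H$ is semisimple without compact factors, by Jacobson--Morozov it contains a one-parameter unipotent subgroup $U = \{u_t\}$, indeed an $\mathrm{SL}_2$-triple $(u_t, a_s, v_t)$; the spectral gap for $H$ acting on $L^2_0(xH)$ (equivalently, on $L^2_0(xG^+)$ restricted, using property $(\tau)$ for the congruence quotient) gives an effective mean ergodic theorem for the $U$-average along $xH$. So if the discrepancy above is large, then the $U$-orbit of a positive-measure set of points $y \in xH$ fails to equidistribute in $xG^+$ at the relevant scale and time $T$ (a power of $D$). The key step is then to feed this failure into the effective closing lemma: a nonequidistributing unipotent orbit of length $T$ must, at some intermediate time, come within distance $\eta = T^{-\kappa}$ of a point lying on a periodic orbit $\Gamma\BM(\R)g'$ of an intermediate subgroup $\BM$, with $\disc(\Gamma\BM(\R)g') \ll T^{\kappa'}$ — this is exactly the quantitative closing statement, and it produces $\BM \in \classH$ (radical unipotent) of dimension strictly between $\dim H$ and $\dim G$.

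The heart of the argument — and the main obstacle — is the \emph{centralizer issue}, which is precisely the new feature advertised in the introduction. The closing lemma a priori only tells us that $xH$ is close to \emph{some} periodic orbit of a proper subgroup; to contradict the hypothesis one must ensure that this subgroup can be taken in $\classH$ \emph{and} that its orbit actually contains (a point close to) $xH$ in a way that the discriminant bound \eqref{eq:diophantineassump thm} applies. When $H$ has a large centralizer $Z_G(H)$, the naive candidate subgroup generated by $H$ and the "drift" direction need not be semisimple nor even have unipotent radical, and one can drift inside $Z_G(H)$ without gaining any algebraic information about $H$ itself. I would handle this by decomposing the relevant exponential-type neighborhood according to the $H$-representation structure: split the displacement into a component transverse to $\mathrm{Lie}(Z_G(H) \cdot H)$ — which, by the usual polynomial-divergence / $(h,\eta)$-transversality argument, forces genuine concentration near a $\Q$-subgroup properly containing $H$ with unipotent radical — and a component inside the centralizer, which I would treat by a separate, softer argument: either the $Z_G(H)$-component is itself constrained to a proper $\Q$-subgroup (again giving an element of $\classH$ after taking the group generated with $H$ and its unipotent radical), or it ranges over a full periodic $Z_G(H)$-orbit, in which case $xH \subset x(H \cdot Z_G(H)^\circ)$ is already periodic under a strictly larger group and we recurse. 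Making the bookkeeping of dimensions, discriminants, and heights uniform across this case division — so that each step loses only a fixed power of $D$ and a fixed factor of $\minht(xH)$, and the recursion terminates in $\ll_G 1$ steps — is the delicate part.

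Finally I would assemble the pieces: fix $d$ large enough to dominate all the Sobolev norms appearing in the spectral gap, the thickening/mollification, and the closing lemma; track the exponents through the mean ergodic theorem ($U$-average of length $T$), the closing lemma ($\eta = T^{-\kappa}$, $\disc \ll T^{\kappa'}$), and the centralizer reduction, and choose $\delta$ so that $T = D^{\delta/\kappa'}$ yields $\disc(\Gamma\BM(\R)g') < D$ for a proper $\BM \in \classH$ containing $xH$ — contradicting \eqref{eq:diophantineassump thm}. The $\minht(xH)$ factor enters because the effective ergodic theorem and the closing lemma both degrade near the cusp, with polynomial dependence on the height of the base point, so one carries $\height(x)$ (or its infimum over the orbit) linearly through the estimate. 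Reversing the contrapositive gives the stated bound with implied constant depending only on $G$, $H$, and $\Gamma$.
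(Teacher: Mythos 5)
Your high-level intuition — use the effective closing lemma to convert slow equidistribution into a nearby low-complexity intermediate orbit, and fix a Sobolev degree so that all losses are polynomial in $D$ — is in the right spirit, but the mechanism you propose for handling the centralizer is not how the paper proceeds, and as stated it has a genuine gap. You propose to split the displacement into a component transverse to $\mathrm{Lie}(Z_G(H)\cdot H)$ and a component inside the centralizer, and to dispatch the centralizer part by a ``soft'' dichotomy (either it is confined to a proper $\Q$-subgroup, or it fills out a full periodic $Z_G(H)$-orbit and one recurses). This dichotomy is not justified: there is no a priori reason for the displacement in the centralizer direction to be constrained to a $\Q$-subgroup or to fill out a periodic orbit, the group $H\cdot Z_G(H)^\circ$ need not be defined over $\Q$ nor have a periodic orbit through $x$, and the intermediate Lie algebras that arise in the course of the argument are in general \emph{not} of the form $\mathfrak{h}+\mathfrak{z}$ for some $\mathfrak{z}\subset \mathrm{Lie}(Z_G(H))$. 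Moreover, ``recursing'' in that branch does not come with any dimension increment or discriminant improvement, so there is no reason for the procedure to terminate, let alone with only polynomial loss in $D$.

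The paper's actual route is the opposite of singling out the centralizer: it runs the Einsiedler--Margulis--Venkatesh recursion through \emph{arbitrary} intermediate Lie subalgebras $\mathfrak{h}\subset\mathfrak{s}\subset\mathfrak{g}$ (which may have a center, may fail to be reductive, etc.), picks an undistorted $H$-invariant complement $\mathfrak{r}$ (not $\mathfrak{s}$-invariant), and shows that $\mu$ is $\varepsilon$-almost invariant under successively larger $\mathfrak{s}$, terminating after $\le\dim G$ steps because dimension strictly increases (Propositions~\ref{prop:add inv}, \ref{prop:effective generation}, \ref{thm:trans2}). The centralizer problem is dissolved — not decomposed — because the effective closing lemma of \cite{EffClosing} (Theorem~\ref{thm:effectiveclosing}, used via Corollary~\ref{cor:effective closing}) applies to boxes $y\exp(B^{\mathfrak{s}})\exp(B^{\mathfrak{r}})$ adapted to \emph{any} non-normal subalgebra $\mathfrak{s}$, which is exactly what the older closing lemma of \cite{emv} (tied to semisimple intermediate groups) could not do. That closing lemma is used not to directly exhibit a low-discriminant orbit, but to show that a Diophantine point's $U$-orbit cannot spend a disproportionate time in a single $(\mathfrak{s},\mathfrak{r})$-box, guaranteeing a pair of generic points with a transversal displacement $r\in\mathfrak{r}$ in the correct size window; the Diophantine condition on the base point comes from hypothesis \eqref{eq:diophantineassump thm} through Corollary~\ref{cor:dio on H-orbit}. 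Additional invariance is then extracted by polynomial divergence along $U$ as in Proposition~\ref{prop:add inv}, the part where one does need a replacement for the $\mathfrak{s}$-equivariance of the complement (the estimate \eqref{eq:emv eq 10.1}). If you want to make your contrapositive version work, you would still need the full strength of the general closing lemma and would end up proving essentially the accumulation-of-almost-invariance statement anyway; the explicit split along $Z_G(H)$ should be abandoned.
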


For a given (semisimple) $\Q$-subgroup $\mathbf{H}$ of $\G$ it is often difficult to classify all intermediate $\Q$-subgroups $\mathbf{H} \subset \BM \subset \G$.
However, in view of the rate in Theorem~\ref{thm:main} it is usually sufficient to verify \eqref{eq:diophantineassump thm} for a suitable subcollection of $\classH$. We illustrate this for orthogonal groups in Section~\ref{sec:intro-orth} below and give a more general statement of this kind in Theorem~\ref{thm:exhaustive}.

\subsection{Some context}\label{sec:history}

Known effective equidistribution results for periodic $H$-orbits (in the arithmetic setup of this article) heavily rely on the uniformity of the spectral gap for $H$-representations $L^2_0(xH)$ where $xH$ runs over all \emph{periodic} $H$-orbits in $X$.
This is a deep result (Clozel's property $(\tau)$) due to various authors, among others Selberg \cite{Selberg} and Jacquet-Langlands \cite{JacquetLanglands} (for groups of type $A_1$), Kazhdan \cite{Kazhdan-PropertyT}, Burger-Sarnak \cite{BurgerSarnak}, and Clozel \cite{Clozel-PropertyTau}; we refer to \cite[\S4.1]{emmv} for a more detailed account of the history.

Using uniformity of the spectral gap, breakthrough work of Einsiedler, Margulis, and Venkatesh \cite{emv} established the following effective equidistribution result for periodic $H$-orbits in $X$.

\begin{theorem}[\cite{emv}]\label{thm:emv}
Suppose that the centralizer of $H$ in $G$ is finite.
Then there exists $\delta >0$ and $d\geq 1$ depending on $G$ and $H$ and $V_0>0$ depending on $G$, $H$, and $\Gamma$ with the following properties.

Let $xH\subset X$ be a periodic $H$-orbit.
For any $V \geq V_0$ there exists a connected intermediate group $H \subset S \subset G$ for which $xS$ is periodic with $\vol(xS) \leq V$ and so that for any $f \in \compactsmooth(X)$
\begin{align*}
\Big| \int_{xH} f \de \mu_{xH} - \int_{xS} f \de \mu_{xS}\Big| 
< V^{-\delta} \Sob_d(f).
\end{align*}
\end{theorem}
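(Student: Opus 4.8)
The plan is to prove Theorem~\ref{thm:emv} by combining effective mixing, which rests on the uniform spectral gap of the representations $L^2_0(xS)$ over periodic orbits $xS$ (Clozel's property $(\tau)$), with an effective version of the Dani--Margulis linearisation technique and an effective closing statement, the whole argument being run as an induction on $\dim S$. Uniformity of the gap over periodic orbits is needed precisely because the induction recurses into the sub-orbits that the closing step produces.

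First I would record the consequence of the spectral gap that is actually used: effective decay of matrix coefficients for $L^2_0(X)$, and more generally for $L^2_0(xS)$ with $xS$ periodic. Next comes the core dichotomy. Fix $V \ge V_0$; one may assume $\vol(xH) > V$, since otherwise $S = H$ works trivially as the left-hand side vanishes. Pick a diagonalisable element $a \in H$ expanding a unipotent direction $u(t) \subset H$. Flowing a small ball around $x$ under $a^n$ for $n \asymp \delta\log V$ spreads it into a long transverse family of unipotent pieces, and effective mixing forces the resulting average to equidistribute towards $\mu_{xG^+}$ --- unless the linearisation obstruction intervenes. Quantifying the Dani--Margulis technique, together with an effective non-divergence estimate (keeping most of the orbit, via $(C,\alpha)$-good functions, in a region of bounded height controlled by $\minht(xH)$), I would show that the only alternative is that the orbit spends most of this time inside a $V^{-\delta}$-neighbourhood of a single translate $\wedgeorbit{\BM}\, g_0$ of the subvariety attached to a proper $\BM \in \classH$; here the relevant subvariety is detected through the action of $G$ on $\bigwedge^{\dim \mathfrak h} \mathfrak g$, where periodic $H$-orbits correspond to integral vectors of norm comparable to the volume. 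Conclusion of this step: either $\mu_{xH}$ is $V^{-\delta}$-close to $\mu_{xG^+}$ in the $\Sob_d$-dual norm, or the orbit shadows $\wedgeorbit{\BM}\, g_0$ for time $\asymp \log V$.

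In the second case the closing step promotes shadowing to periodicity: a suitable conjugate of $\BM$ admits a periodic orbit $xS$ with $H \subseteq S$ and $\vol(xS)$ bounded polynomially in $V$, proven by turning the long stretch of approximate invariance into an exact integral relation between the vectors of $\bigwedge^{\dim \mathfrak h}\mathfrak g$ representing the two orbits. If $\vol(xS) \le V$ and $\mu_{xH}$ is $V^{-\delta}$-close to $\mu_{xS}$ we stop; otherwise $H \subsetneq S$, the orbit $xS$ is again periodic under a semisimple group with finite centraliser (its centraliser lies in that of $H$), and we re-run the argument on $xS$. Since $\dim S$ strictly increases the process halts within $\dim G$ rounds; after book-keeping the polynomial loss in $V$ accumulated along the chain and renaming $\delta$ and $d$, one obtains for every $V \ge V_0$ an intermediate $S$ with $\vol(xS) \le V$ and the desired bound, $S = G^+$ being the terminal, fully equidistributed case.

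The hard part will be the closing step: converting ``the orbit approximately lies in, and is approximately invariant along, a translate of an $\classH$-subvariety for time $\asymp \log V$'' into ``$xS$ is genuinely periodic, with $\vol(xS)$ polynomial in $V$''. The delicacy is twofold: one must exclude directions that are almost-invariant but not invariant, lest the limiting object fail to be algebraic, and one must control the arithmetic complexity (discriminant, hence volume) of the subgroup obtained, not merely its real dimension. A secondary burden is making the mixing, non-divergence and linearisation estimates genuinely uniform over \emph{all} periodic $H$-orbits, with the stated dependence on $\minht(xH)$. It is exactly this closing step that the present paper streamlines, replacing EMV's bespoke contraction argument by the effective closing lemma from the joint work with Lindenstrauss, Margulis, Mohammadi and Shah.
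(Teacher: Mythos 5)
Your proposal does not follow the route of \cite{emv} (which is the route this paper sketches in \S1.3 and reuses): you describe a mixing--linearisation argument, whereas the actual proof is an effectivisation of Ratner's measure-classification scheme based on accumulating almost-invariance via polynomial divergence. Concretely, \cite{emv} shows that the Haar measure $\mu_{xH}$ is $\varepsilon$-almost invariant (Definition~\ref{def:almost invariant}) under successively larger intermediate Lie algebras $\Fh = \Fs_0 \subsetneq \Fs_1 \subsetneq \cdots$, at each stage (i) using the uniform spectral gap on $L^2_0(xH)$ to produce an effective Birkhoff ergodic theorem along the one-parameter unipotent $U \subset H$ (cf.\ Proposition~\ref{prop:eff erg}), (ii) extracting two nearby $U$-generic points whose displacement lies in a complement $\Fr$ of $\Fs$, (iii) running polynomial divergence of nearby $U$-orbits to upgrade the displacement into almost-invariance under a new vector in $\Fr$ (cf.\ Proposition~\ref{prop:add inv}), and (iv) invoking effective generation (Proposition~\ref{prop:effective generation}) to pass to a larger Lie algebra. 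Mixing, in the form of a spectral gap estimate, only enters at the very end, once $\mu$ is almost-invariant under all of $\Fg$, and in the effective ergodic theorem --- never as a direct equidistribution engine.

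The scheme you propose --- expand a small ball around $x$ under $a^n$ with $a \in H$ diagonal, invoke effective decay of matrix coefficients for $a$ in $G$, and bring in Dani--Margulis linearisation as the only obstruction --- is the standard one for \emph{horospherical} orbits, or for \emph{translates} of a fixed symmetric orbit as in \cite{BenoistOh}, and it does not run here. The unstable horosphere of $a$ in $H$ is in general a proper subspace of the unstable horosphere of $a$ in $G$ (unless $H$ is horospherical or of maximal dimension in a suitable sense), so $a^n$-expansion of a ball inside $xH$ does not exhaust the $G$-unstable and mixing for $a$ on $X$ gives no equidistribution conclusion. Controlling the missing transversal directions is precisely the entire content of the almost-invariance iteration you omitted; there is no way to shortcut it with linearisation alone. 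Relatedly, your induction step is underspecified: from ``the $U$-orbit shadows $\Gamma \BM(\R) g_0$'' you jump to ``$\mu_{xH}$ is $V^{-\delta}$-close to $\mu_{xS}$'', but that estimate is exactly what has to be proved, and it is proved in \cite{emv} by continuing the almost-invariance iteration relative to the intermediate $S$ (using, crucially, that $S$ is semisimple with its own uniform spectral gap thanks to the finite-centraliser hypothesis). You correctly identify the closing step (turning long near-returns into a genuine periodic intermediate orbit with controlled discriminant) and the upward induction on $\dim S$ as ingredients, and you correctly observe that the present paper replaces EMV's closing argument by Theorem~\ref{thm:effectiveclosing}; but without the almost-invariance machinery the argument does not close.
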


In practice, one can take $V$ to be a sufficiently small power of the volume of $xH$ to find a well-approximated orbit of a larger intermediate group.
The exponent $\delta>0$ is in principle computable, see \cite{Mohammadi-explicitconst} for the special case $\SO(2,1)< \SL_3(\R)$.
We also refer to the surveys of Einsiedler \cite{Einsiedler-effectiveintro} and Einsiedler, Mohammadi \cite{EinsiedlerMohammadi-effectivesurvey}.

While the centralizer assumption does not seem crucial to the method, it is intricately built into the proof of Theorem~\ref{thm:emv}.
It does assert among other things that periodic $H$-orbits appear `discretely'. 
Indeed, for any periodic orbit $xH$ and any $c\in G$ centralizing $H$ the orbit $xcH$ is also periodic and hence periodic $H$-orbits appear in families parametrized by the centralizer.
Moreover, the centralizer assumption restricts the possibilities for intermediate subgroups (cf.~\cite[App.~A]{emv}): there are finitely many and all are semisimple.

In view of applications, one would like the centralizer assumption to be removed.
Previously, this had been achieved in the following special cases:
\begin{itemize}
\item In \cite{emvforSld}, Aka, Einsiedler, Li, and Mohammadi established the analogue of Theorem~\ref{thm:emv} for $\G= \SL_d$ (or a real split $\Q$-form of $\SL_d$) and
\begin{align*}
H = \Big\{\begin{pmatrix}
g_1 & 0 \\ 0 & g_2
\end{pmatrix}: g_1 \in \SL_{k}(\R),g_2 \in \SL_{d-k}(\R)\Big\}
\end{align*}
for $(k,d)\neq (2,4)$. In this case, the centralizer of $H$ is a one-dimensional (split) torus. In particular, an $H$-orbit can lie far up in the cusp, which is why the statement as in Theorem~\ref{thm:emv} needs to be adapted by a suitable height function (as in Theorem~\ref{thm:main}).
The case $(k,d)= (2,4)$ is ruled out due to the presence of an intermediate symplectic group.
\item In \cite{EinsiedlerWirth}, Einsiedler and Wirth established the analogue of Theorem~\ref{thm:emv} for $\G = \SO_Q$ where $Q$ is a rational quadratic form of signature $(n,1)$ and $H \simeq \SO(2,1)^+$.
In this case, the centralizer is compact (it is isomorphic to $\SO(n-2)$). 
\end{itemize}
The above theorems could also be phrased as equidistribution theorems in the ambient space with a rate polynomial in the minimal intermediate volume (in analogy to Theorem~\ref{thm:main}).
The current article removes the centralizer assumption in this phrasing (see also Remark~\ref{rem:extensions} below).
This heavily relies on an effective closing lemma from work of the author with Lindenstrauss, Margulis, Mohammadi, and Shah \cite{EffClosing} (see Theorem~\ref{thm:effectiveclosing} below).
 
The rate given in any equidistribution result as in Theorems~\ref{thm:main} or~\ref{thm:emv} can only be as fast as the minimal volume or discriminant of an intermediate orbit. 
In that sense, the above theorems are optimal.
The decay exponents are likely far from optimal in all of the above theorems.

\begin{remark}[Extensions]\label{rem:extensions}
In principle, the techniques of this paper ought to give a similarly statement as Theorem~\ref{thm:emv} (the main theorem of \cite{emv}).
Indeed, one might imagine applying Theorem~\ref{thm:main} by 'downward induction': if $D >0$ does not satisfy \eqref{eq:diophantineassump thm} for some $\mathbf{M}$, switch to the equidistribution problem for $xH \subset \Gamma \BM(\R) g$. Difficulties in doing so arise for example from the fact that $\BM$ might not be semisimple or from the (here inexplicit) dependency of the implicit constants on $G$ and $\Gamma$.
\end{remark}

\begin{remark}[Semisimple adelic periods]\label{rem:adelic}
Theorem~\ref{thm:emv} has been extended to adelic periods by Einsiedler, Margulis, Mohammadi, and Venkatesh \cite{emmv} for maximal subgroups and by Einsiedler, R\"uhr, and Wirth \cite{ERW16} in a special case for a  partial resolution of a conjecture of Aka, Einsiedler, and Shapira \cite{AES14}.
The former clarifies the dependency of Theorem~\ref{thm:emv} on $H$.
Lastly, we remark that a full resolution of the effective equidistribution problem for semisimple adelic periods promises an abundance of interesting arithmetic applications. This includes an effective version of the integral Hasse principle for representations of quadratic forms (cf.~\cite{localglobalEV}).
\end{remark}

\begin{remark}
The currently available effective results on unipotent flows do not require orbits to be periodic. 
Effective analogues of Ratner's equidistribution theorem \cite{ratner91-topological} are known in various instances.
As mentioned earlier, when the acting unipotent group is horospherical equidistribution is well understood -- see for instance \cite{Sarnak-horocycle,Burger-horocycle,KleinbockMargulis-Thickening,FlaminioForni,Strombergsson13,SarnakUbis,KleinbockMargulis-effective,McAdam-primetimes,Katz-disjointness}.
In a somewhat different direction, effective equidistribution results have been established for the horospherical subgroup of $\SL_n(\R)$ acting on arithmetic quotients of $\SL_n(\R) \rtimes (\R^n)^k$ \cite{Strombergsson15,BrowningVinogradov,StrombergssonVishe,kim2024effective}.
In a recent breakthrough, Lindenstrauss and Mohammadi \cite{LindenstraussMohammadi} and Lindenstrauss, Mohammadi, and Z.~Wang \cite{LMW-effequi1,LMW-effequi2} proved effective equidistribution for one-parameter unipotent flows on arithmetic quotients of $\SL_2(\R) \times \SL_2(\R)$ or $\SL_2(\C)$; for quotients of rank two real split groups see \cite{Lei-SL3,LMWY}.
We refer to \cite[\S1.4]{LMW-effequi2} for a more detailed account of the history.
\end{remark}

\subsection{A special case: Special orthogonal groups}\label{sec:intro-orth}
To illustrate Theorem~\ref{thm:main}, we give explicit conditions to verify \eqref{eq:diophantineassump thm} in a special case related to \cite{EinsiedlerWirth}.
For the remainder of the section, consider the semisimple $\Q$-group $\G = \SO_Q< \SL_d$ where $Q$ is an indefinite rational quadratic form of signature $(p,q)$ where $p+q=d\geq 4$ and $p,q>0$.
We let $\Gamma < \SO_Q(\Q) \cap \SL_d(\Z)$ be a congruence subgroup and $G = \G(\R)$, $X = \lquot{\Gamma}{G}$ as before.
As acting group, take $H$ to be the identity component of the pointwise stabilizer subgroup of a positive definite real subspace of $\R^d$ of dimension less than $p$.
In particular, $H \simeq \SO(p',q)^+$ for some $p'<p$ and the centralizer of $H$ is a compact group isomorphic to $\SO(p-p')$.

For any collection of vectors $\mathcal{B} \subset \Q^d$ we set
\begin{align*}
\BM_{\mathcal{B}}
= \{g \in \G: g.v = v \text{ for all } v \in \mathcal{B}\}.
\end{align*}
Moreover, we define for any periodic orbit $xH = \Gamma g H$
\begin{align*}
\minobs'(xH) = \inf\{\disc(\Gamma \BM_v(\R) g): v \in \Q^d \text{ with } Q(v) \neq 0 \text{ and } \Gamma \BM_v(\R) g \supset xH\}.
\end{align*}
Note that many periodic intermediate orbits as in Theorem~\ref{thm:main} are not of the above form $\Gamma \BM_v(\R) g$.
Nevertheless, Theorem~\ref{thm:main} together with an analysis of intermediate groups yields the following theorem.

\begin{theorem}\label{thm:effequi orth}
There exist $\delta >0$ and $d \geq 1$ depending on $G$ and $H$ with the following properties.
For any periodic $H$-orbit $xH$ and any $f \in \compactsmooth(X)$
\begin{align*}
\Big| \int_{xH} f \de \mu_{xH} - \int_{xG^+} f \de \mu_{xG^+}\Big| 
\ll_{G,H,\Gamma} \minobs'(xH)^{-\delta} \Sob_d(f).
\end{align*}
\end{theorem}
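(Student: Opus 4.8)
The plan is to deduce Theorem~\ref{thm:effequi orth} from Theorem~\ref{thm:main} by classifying the intermediate $\Q$-subgroups that can occur for this $H$ and relating their discriminants to $\minobs'(xH)$; this is exactly the kind of verification the excerpt anticipates is usually possible for a subcollection of $\classH$. Fix a periodic orbit $xH=\Gamma g H$. Periodicity forces $gHg^{-1}$ to be the identity component of the real points of a $\Q$-subgroup of $\G$, and since $H$ is the identity component of the pointwise stabilizer of a positive definite real subspace, this $\Q$-subgroup is the pointwise stabilizer $\BM_W$ of a positive definite $\Q$-subspace $W\subseteq\Q^d$ with $\dim W=p-p'$, so $\BM_W\simeq\SO(p',q)$. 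Because $\SO(p',q)$ has no nonzero fixed vector in its standard representation (here $p'+q\ge 3$, as $H$ is semisimple without compact factors), for $v\in\Q^d$ with $Q(v)\neq 0$ one has $\Gamma\BM_v(\R)g\supseteq xH$ if and only if $v\in W$; hence $\minobs'(xH)=\inf\{\disc(\Gamma\BM_v(\R)g):v\in W\setminus\{0\}\}$. Using the definition of the discriminant together with reduction theory, this quantity is comparable, up to constants depending on $G,H,\Gamma$, to the shortest-vector length of a natural lattice $\Lambda_W$ in $W$; in particular we may assume $\minobs'(xH)\ge 1$.

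The main step is the following classification: \emph{every connected $\Q$-subgroup $\BM$ with $\BM_W\subseteq\BM\subsetneq\G$ stabilizes a $\Q$-subspace $U$ with $\{0\}\subsetneq U\subseteq W$.} Indeed, as a $\BM_W$-module one has $\Q^d=W\oplus W^\perp$ with $W$ trivial and $W^\perp$ the standard module, which is irreducible and nontrivial (again using $p'+q\ge 3$); consequently every $\BM$-submodule of $\Q^d$ is contained in $W$, contains $W^\perp$, or equals $\{0\}$ or $\Q^d$. If $\BM$ preserves a subspace strictly between $\{0\}$ and $W$ we are done; if $\BM$ preserves $W$ itself we take $U=W$; and if $\BM$ preserves neither, then (considering the $\BM$-submodule generated by $W^\perp$, whose orthogonal complement lies in $W$) the only $\BM$-submodules of $\Q^d$ are $\{0\}$ and $\Q^d$, so $\Q^d$ is an irreducible $\BM$-module — but a direct analysis of the $\Q$-subgroups of $\SO_Q$ containing the standard copy of $\SO(p',q)$ shows no proper one acts irreducibly, forcing $\BM=\G$, a contradiction. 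Given such a $U$ of dimension $m\le p-p'$, we have $\BM\subseteq\mathrm{Stab}_\G(U)$ for the setwise stabilizer, and then, using that the discriminant is nonincreasing along inclusions of periodic orbits (up to polynomial factors), that $\disc(\Gamma\,\mathrm{Stab}_\G(U)(\R)g)$ is comparable to the height of $g.U$, Minkowski's second theorem (the covolume of a rank-$m$ lattice is $\gg$ its shortest-vector length to the $m$-th power), and $\Lambda_U\subseteq\Lambda_W$, we obtain
\[
\disc(\Gamma\BM(\R)g)\ \gg\ \disc(\Gamma\,\mathrm{Stab}_\G(U)(\R)g)\ \gg\ \minobs'(xH)^{m}\ \ge\ \minobs'(xH).
\]
Thus the Diophantine hypothesis \eqref{eq:diophantineassump thm} of Theorem~\ref{thm:main} holds with $D\asymp_{G,H,\Gamma}\minobs'(xH)$.

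It remains to absorb the factor $\minht(xH)$ from Theorem~\ref{thm:main}. Here the compactness of the centralizer $\SO(p-p')$ of $H$ enters: combining a standard non-escape-of-mass estimate for periodic orbits of semisimple groups (bounding $\minht(xH)$ by a power of $\vol(xH)$) with $\vol(xH)\ll\disc(xH)^{O(1)}$ and with the bound $\disc(xH)=\disc(\Gamma\BM_W(\R)g)\ll_{G,H,\Gamma}\minobs'(xH)^{O_{G,H}(1)}$ (a direct computation with the discriminant of the pointwise stabilizer $\BM_W$), one gets $\minht(xH)\ll_{G,H,\Gamma}\minobs'(xH)^{O_{G,H}(1)}$. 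Feeding $D\asymp\minobs'(xH)$ and this height bound into Theorem~\ref{thm:main} and decreasing $\delta$ to absorb the extra power of $\minobs'(xH)$ yields the theorem. The main obstacle is the classification above: one must accommodate genuine intermediate subgroups such as $N\times\SO(W^\perp)$ with $N\subsetneq\SO(W)$ a proper semisimple $\Q$-subgroup of the anisotropic group $\SO(W)$ — these are not of the form $\BM_v$, yet they stabilize $W$ and are covered by the case $U=W$ — and one must rule out proper irreducible overgroups of $\SO(p',q)$; the most delicate routine point is the height bound, since $\disc(xH)$ is a priori controlled by the covolume rather than the shortest vector of $\Lambda_W$.
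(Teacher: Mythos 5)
Your overall strategy — deduce the theorem from Theorem~\ref{thm:main} by classifying the intermediate $\Q$-subgroups and using compactness of the centralizer to control the height — matches the paper's, but two of your intermediate steps do not hold. First, the chain you propose for the height factor, $\minht(xH)\ll\vol(xH)^{O(1)}\ll\disc(xH)^{O(1)}\ll\minobs'(xH)^{O(1)}$, fails at the last link: $\disc(xH)=\disc(\Gamma\BM_W(\R)g)$ is governed by the covolume of the lattice $\Lambda_W=L\cap\Z^d$ (with $g$ bounded), while $\minobs'(xH)$ is comparable to the length of the shortest nonzero vector of $\Lambda_W$ (this is precisely Remark~\ref{rem:exptheta}). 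Minkowski gives $\minobs'(xH)\ll\disc(xH)^{1/m}$, which is the reverse of what you need; a lattice with one very short vector but large covolume falsifies your bound. The paper's observation is simpler and correct: since $H$ has compact centralizer, a theorem of Dani--Margulis (see also \cite[Cor.~6.4]{effectiveavoidance}) gives $\minht(xH)\ll 1$ \emph{uniformly}, with no detour through $\vol$ or $\disc$, and this is what one feeds into Theorem~\ref{thm:main}.

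Second, your passage from the module-theoretic classification to a discriminant bound rests on the assertion that "the discriminant is nonincreasing along inclusions of periodic orbits (up to polynomial factors)," applied to $\BM\subseteq\mathrm{Stab}_\G(U)$. This is not a fact the paper establishes nor an obvious one: $\disc(\Gamma\BM(\R)g)$ and $\disc(\Gamma\BM'(\R)g)$ for $\BM\subset\BM'$ live in different exterior powers and need not be comparable with a uniform polynomial exponent. (Moreover $\mathrm{Stab}_\G(U)$ has a central torus, so it is not of class $\classH$ and the discriminant of its orbit is not even defined in the sense of \S\ref{sec:discriminant}.) The paper sidesteps this by proving height-level exhaustiveness (Definition~\ref{def:exhaustive} and Proposition~\ref{prop:exhaustive orth}): for any intermediate $\BM$ of class $\classH$ one produces a primitive $v\in L$ with $\height(\BM_v)\ll\height(\BM)^\star$ (via Siegel's lemma applied to the fixed space of a controlled torus or central subgroup), and Theorem~\ref{thm:exhaustive} then converts heights of subgroups to discriminants of orbits at the cost of a $\minht(xH)^\star$ factor, which is bounded. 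Finally, you assert without proof that no proper $\Q$-subgroup containing $\SO(p',q)$ acts irreducibly on $\Q^d$; this is the hardest case and the paper invokes the argument of \cite[\S2.6]{localglobalEV} for it.
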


Theorem~\ref{thm:effequi orth} can be seen as an extension of the effective equidistribution result of Einsiedler and Wirth \cite{EinsiedlerWirth} (where $q=1$) mentioned earlier.
Orbits of special orthogonal subgroups are of particular interest in view of number theoretic applications, most notably towards the integral Hasse principle -- see \cite{localglobalEV} and Remark~\ref{rem:adelic}.
As the spin group has strong approximation for indefinite forms, the integral Hasse principle for indefinite forms is readily obtained.
(In particular, the current result in Theorem~\ref{thm:effequi orth} does not yield any progress towards it.)
However, Theorem~\ref{thm:effequi orth} does potentially yield equidistribution results in the spirit of \cite{AES14,AES15,AEW-2in4,AMW-higherdim} for representations of positive-definite integral quadratic forms by indefinite integral quadratic forms.

\begin{remark}\label{rem:exptheta}
One can relate $\minobs'(xH)$ to the length of the shortest integer vector in a certain rational subspace as follows. By the Borel-Wang density Theorem \cite[Ch.~II]{Margulis-book} there exists for any periodic $H$-orbit $xH= \Gamma gH$ a rational subspace $L \subset \Q^d$ (positive-definite over $\R$) such that $gHg^{-1} = \BM_L(\R)^+$.
The subspace $L$ depends not only on $xH$ but also on the representative $g$. 
Since $H$ has a compact centralizer in $G$, classical non-divergence results thus imply that $\minht(\cdot)$ is uniformly bounded on periodic $H$-orbits.
Equivalently, any periodic orbit $xH = \Gamma g H$ has a representative $g$ of absolutely bounded size.
Defining the above subspace $L$ with this choice, we have $\norm{v}^\star \gg\disc(\Gamma \BM_v(\R) g) \gg \norm{v}^\star$ for any primitive integral vector $v \in L \setminus \{0\}$ (if $\Gamma \BM_v(\R) g$ contains $xH$, $v$ must be contained in $L$).
In other words, Theorem~\ref{thm:main} provides a rate polynomial in $\min_{0 \neq v \in L} \norm{v}$.
\end{remark}

\subsection{Some ideas of the proof of Theorem~\ref{thm:main}}
We follow an a priori familiar strategy already used in previous works \cite{emv,emvforSld,EinsiedlerWirth} which in essence effectivizes a proof of Ratner's measure classification theorem for $H$-invariant and ergodic measures on $X$ (cf.~\cite[\S2]{emv}, \cite{Einsiedler-Ratner}).
The outline below recovers this strategy. We omit exact definitions and some (partially important) details as well as polynomial dependencies where convenient in order to focus on the ideas.

Suppose that $\mu$ is the Haar probability measure on a periodic $H$-orbit $xH$.
The proof proceeds by recursion showing that $\mu$ is `almost invariant' under larger and larger intermediate Lie algebras $\mathfrak{h}\subset \Fs\subset \Fg$.
Here, no restriction whatsoever is imposed on $\Fs$ (e.g.~$\Fs$ might have a center).
A measure $\mu$ is $\varepsilon$-almost invariant under $\Fs$ if the $\exp(Z)$-translate of $\mu$ is within $\varepsilon$ of $\mu$ (in a suitable sense -- see Definition~\ref{def:almost invariant}) for all $Z \in \Fs$ with $\norm{Z}\leq 2$.
If the measure $\mu$ is $\varepsilon$-almost invariant under $\Fg$, the spectral gap on $xG^+$ implies the theorem.
Moreover, at each step of the recursion $\varepsilon$ will be polynomial in $D$.

Assume from now on that $\mu$ is $\varepsilon$-almost invariant under some intermediate Lie algebra $\Fs$. As $H$ is semisimple, there exists an $H$-invariant complement $\Fr$ to $\Fs$. We further fix a one-parameter unipotent subgroup $U = \{u_t: t\in \R\}$ of $H$ which acts ergodically on $xH$.

\subsubsection{From transversal generic points to almost invariance}\label{sec:intro-addinv}
Using effective decay of matrix coefficients for the $H$-representation $L_0^2(\mu)$, Einsiedler, Margulis, and Venkatesh \cite{emv} have established the existence of a large set of $U$-generic points where 'generic' is taken to be an effectivization (cf.~Definition~\ref{def:discrepancy}) of the usual Birkhoff genericity notion.
For the purposes of this outline, assume now that there are two points $x_1,x_2 \in X$ with $x_2 = x_1 \exp(r)$ for some nonzero small $r \in \Fr$ and with
\begin{align}\label{eq:gen-pseudo}
\Big|\frac{1}{\sqrt{n}}\int_{n}^{n+\sqrt{n}} f(x_iu_t) \de t - \int f \de \mu \Big| \leq C(f) \frac{1}{\sqrt{n}}
\end{align}
for all $1 \leq n \leq \varepsilon^{-\delta}$ where $C(f)>0$ depends on $f \in \compactsmooth(X)$ and $\delta>0$ is given.
The behaviour of the displacement between $x_1u_t$ and $x_2u_t$ in the time $t$ is governed by the polynomial map $p:t \mapsto \Ad_{u_{-t}}(r)-r $. One distinguishes two situations:
\begin{enumerate}
\item When the supremum of $\norm{p(t)}$ over $t \in [0,\varepsilon^{-\delta}]$ is, say, at least $1$, then polynomial divergence techniques show that the pieces of the $U$-orbits are roughly parallel towards the end. In this case, one obtains that $\mu$ is $\varepsilon^\star$-almost invariant under that displacement.
\item When the supremum is at most $1$, the polynomial $\Ad_{u_{-t}}(r)$ is roughly constant equal to $r$ on a polynomially shorter interval (such as $[0,\varepsilon^{-\delta/2}]$).
In this case, $\mu$ is $\varepsilon^\star$-almost invariant under $r$.
\end{enumerate}
In both of the above cases, the size of $r$ needs to be controlled in terms of $\varepsilon$.
For instance, if the size $\norm{r}$ is a lot smaller than $\varepsilon$, the conclusion of (2) is vacuous by Lipschitz continuity. In short, the displacement $r \in \Fr$ needs to satisfy
\begin{align}\label{eq:pseudoexpdispl}
\varepsilon^{\star}\leq \norm{r} \leq \varepsilon^\star
\end{align}
with suitable exponents from the start.

Lastly, we remark that the effective generation results established in \cite{emv} also apply here to show that if $\mu$ is $\varepsilon$-almost invariant under $\Fs$ and $\varepsilon^\star$-almost invariant under some unit vector in $\Fr$, it is $\varepsilon^\star$-almost invariant under a new intermediate Lie algebra of larger dimension.
Hence, the recursion may be continued.

\subsubsection{Existence of transversal generic points}\label{sec:intro-transv}

Overall, it remains to discuss why two points $x_1,x_2$ with the Birkhoff genericity assumption in \eqref{eq:gen-pseudo} and a controlled transversal displacement as in \eqref{eq:pseudoexpdispl} exist (see Proposition~\ref{thm:trans2} for an exact statement).
To outline the argument we suppose for simplicity that $\G$ is absolutely almost simple and that $X$ is compact. 
Fix $T>0$ which is implicitly polynomial in $D$.
We cover $X$ with small boxes of the form 
$$y \exp({B_{\delta_1}^\Fs})\exp({B_{\delta_2}^\Fr})$$
where $\delta_1$ is much larger than $\delta_2$ and both are polynomial in $T$.
Consider the $U$-orbit for times $[0,T]$ through a point $x_0 \in xH$.
We may assume that $x_0u_t\exp(s)$ is generic for at least $90\%$ of $s \in B_{1}^\Fs$ and $t \in [0,T]$ in a sense similar to \eqref{eq:gen-pseudo};
this follows from an effective ergodic theorem (Proposition~\ref{prop:ptsalongU-orbitswith many close gen pts}) which was up to minor differences already established in \cite{emv}.
By realignment, we need to show that $x_0u_t$ does not spend a disproportionate amount in any one of the above boxes.

This situation is addressed by an effective closing lemma in work with Lindenstrauss, Margulis, Mohammadi, and Shah \cite{EffClosing}:
If $x_0u_t$ spends a disproportionate amount in one of the above boxes, we obtain information about the initial point $x_0$, namely that its orbit under $U$ stays close to a periodic orbit of some proper subgroup of $G$ for the time $T$.
However, effective avoidance results of Lindenstrauss, Margulis, Mohammadi, and Shah \cite{effectiveavoidance} imply that the set of such points have small measure in $xH$ (cf.~Corollary~\ref{cor:dio on H-orbit}).
In other words, we may choose the initial point appropriately and avoid the above problem.

\subsection{Overview of this article}
This article is structured as follows:
\begin{itemize}
\item In \S\ref{sec:prelim}, we set up notation and give some fundamental definitions (Sobolev norms, almost invariance and heights to name a few).
\item In \S\ref{sec:efferg}, we give a variant of the effective ergodic theorem proved in \cite[\S9]{emv}.
\item In \S\ref{sec:diopoints}, we recall the effective closing lemma from work with Lindenstrauss, Margulis, Mohammadi, and Shah \cite{EffClosing} and the effective avoidance result from \cite{effectiveavoidance}.
Simple corollaries for periodic $H$-orbits are derived.
\item In \S\ref{sec:addinv}, we show how pairs of nearby generic points give rise to additional almost invariance (as outlined in \S\ref{sec:intro-addinv}).
\item In \S\ref{sec:existence transverse}, we establish the existence of such generic points using \S\ref{sec:efferg},\S\ref{sec:diopoints}. 
\item In \S\ref{sec:proofmain}, we prove Theorem~\ref{thm:main} by an inductive procedure accumulating almost invariance.
\item In \S\ref{sec:exhaustive}, we give an extension of Theorem~\ref{thm:main} for suitable collections of subgroups of class $\classH$. In particular, we prove Theorem~\ref{thm:effequi orth}. 
\end{itemize}

\textbf{Acknowledgments:}
I would like to thank Elon Lindenstrauss and Amir Mohammadi for many discussions on this and related topics including, but not at all limited to, effective closing lemmas.
I am also grateful towards Manfred Einsiedler and Zhiren Wang for conversations about effective equidistribution results.
\section{Preliminaries}\label{sec:prelim}

\subsection{Basic setup}\label{sec:basicsetup}
Let $\G$ be a connected semisimple $\Q$-group and fix a faithful representation $\G \hookrightarrow \SL_\dimemb$.
We may assume that the adjoint representation occurs as an irreducible subrepresentation.
To simplify notation, we identify $\G$ with its image.

The embedding $\G \hookrightarrow \SL_\dimemb$ fixes an integral structure on $\G$ which is inherited by its subgroups.
More concretely, we denote $\BM(\Z) = \BM(\Q)\cap \SL_N(\Z)$ for any $\Q$-subgroup $\BM < \G$.
Let $\Gamma < \G(\Z)$ be a congruence lattice, $G  = \G(\R)$ and $X = \lquot{\Gamma}{G}$.
The Lie algebra $\Fg$ of $\G$ also inherits an integral structure such that $\Fg(\Z) = \Fg(\Q) \cap \mathfrak{sl}_\dimemb(\Z)$. 
In particular, $\Fg(\Z)$ is $\Gamma$-invariant under the adjoint representation.

We fix a Euclidean norm $\norm{\cdot}$ on $\Mat_n(\R)$ with $\norm{[X,Y]}\leq \norm{X}\norm{Y}$ for all $X,Y$.
The Euclidean norm restricted to $\Fg$ induces a left-invariant metric $\metric(\cdot,\cdot)$ on $G$ which descends to a metric on $X$ also denoted by $\metric(\cdot,\cdot)$.
For $g \in \SL_N(\R)$ we write $\abs{g} = \min\{\norm{g}_\infty,\norm{g^{-1}}_\infty\}$ where $\norm{g}_\infty = \max_{i,j}|g_{ij}|$.
For an algebraic subgroup $\BM<\G$ the identity component is denoted by $\BM^\circ$ whereas for a (Hausdorff-) closed subgroup $M < G$ the identity component in the Hausdorff topology is $M^+$.

We fix a connected semisimple subgroup $H< G$ without compact factors throughout the whole article.
We may assume that $H$ is not contained in (the real points of) any proper normal $\Q$-subgroup of $\G$. 
Indeed, for any $g \in G$ and a proper normal subgroup $\BM \lhd \G$ the discriminant of the orbit $\Gamma \BM(\R)g$ depends only on $\BM$ (by the definition given in \S\ref{sec:discriminant} below).
Thus, Theorem~\ref{thm:main} is trivial for any periodic orbit $\Gamma g H \subset \Gamma \BM(\R)g$ simply by choosing a large enough implicit constant (depending only on $\G$).

Moreover, we fix a homomorphism $\SL_2(\R) \to H$ that projects non-trivially onto any simple factor of~$H$. We write $U< H$ for the image of
\begin{align*}
\Big\{
\begin{pmatrix}
1 & \ast \\ 0 & 1
\end{pmatrix}\Big\} < \SL_2(\R).
\end{align*}
We pick a unit vector $\ugen \in \Fg$ so that $U= \{u_t:t \in \R\}$ with $u_t = \exp(t\ugen)$.
The above assumptions imply that $U$ is not contained in any normal $\Q$-subgroup of $\G$.


Given any $H$-invariant subspace $V \subset \Fg$ there exists a $H$-invariant complement $W$ i.e.~a subspace with $V \oplus W = \Fg$.
One can show\footnote{Indeed, decomposing $\Fg = \bigoplus_{i \in \mathcal{I}}V_i$ into irreducible subrepresentations it is easy to see that for any $H$-invariant subspace $V$ there is a subset $\mathcal{I}'$ such that $W = \bigoplus_{i \in \mathcal{I}'}V_i$ has the required property.} 
that there is a constant $\kappaundistcompl>0$ such that for any $H$-invariant subspace $V$ there exists an $H$-invariant complement $W$ with $\norm{v+w}^2 \geq \kappaundistcompl(\norm{v}^2+\norm{w}^2)$ for any $v \in V$ and $w \in W$.
We shall call such an $H$-invariant complement \emph{undistorted}.

Throughout this article, we shall choose undistorted $H$-invariant complements for Lie subalgebras $\Fs$ containing $\Fh$.
As opposed to \cite{emv}, no information is known on $\Fs$ and in particular, the complement cannot be chosen $\Fs$-invariant.

\subsubsection{Notations regarding implicit constants}
For two positive quantities $A,B$ (such as real valued positive functions) we write $A \ll B$ if there exists a constant $c>0$ such that $A \leq cB$. If we want to emphasize the dependency of the implicit $c$ on another quantity, say $m$, we write $A \ll_m B$.
Throughout this article, most implicit constants are allowed to depend on $\dimemb$ or $\dim(\G)$ and this dependency is usually omitted.
Similarly, we write $A^\star$ to mean $A^k$ for an absolute implicit constant $k>0$ which is allowed to depend on $\dimemb$.

\subsection{Height in the cusp and Sobolev norms}\label{sec:sob norms}
Define for any $x = \Gamma g \in X$ the height in the cusp
\begin{align*}
\height(x) = \sup\{ \norm{v}^{-1}: v \in \Ad(g^{-1})\Fg_\Z \text{ non-zero}\}.
\end{align*}
As mentioned earlier, this height is not to be confused with the notions of arithmetic heights defined e.g.~in \S\ref{sec:wedge rep}.
By Mahler's compactness criterion the height function $\height: X \to \R_{>0}$ is continuous and proper.
Let $X_{\eta}$ for any $\eta>0$ be the set of points of height at most $\eta^{-1}$. 
There is $\kappainjradius>0$ such that a uniform injectivity radius on $X_\eta$ is given by $\eta^{-\kappainjradius}$ for $\eta$ sufficiently small.
Moreover, by \cite[Lemma~2.7]{effectiveavoidance} there exist constants $\Adiam>0$ (depending on $\dimemb$) and $\Cdiam>0$ (depending on the geometry of $X$) such that for any $x \in X$ there exists a representative $g \in G$ with
\begin{align}\label{eq:diameter}
\abs{g}\leq \Cdiam \height(x)^{\Adiam}.
\end{align}
We refer to \cite[Thm.~1.7]{Mohammadi-Diameter} for a sharper understanding of the implicit constant in terms of $G$ and $\Gamma$. 
Lastly, we remark that $\height(xg) \ll \abs{g} \height(x)$ for any $x \in X$, $g \in G$.

Fix an orthonormal basis of $\Fg$ and define for any $d \geq 0$ an $L^2$-Sobolev norm $\Sob_d$ of degree $d$ on $\compactsmooth(X)$ via
\begin{align*}
\Sob_d(f)^2 = \sum_{\mathcal{D}} \norm{(\height(\cdot)+1)^d \mathcal{D}f}^2
\end{align*}
where $\mathcal{D}$ runs over all monomials of degree $\leq d$ in the orthonormal basis of $\Fg$.
We record some properties of these Sobolev norms proven in \cite{emv}:
\begin{enumerate}[label=(S\arabic*)]
\item \label{item:sobineq}
There exists $\kappaSobineq \geq \dim(G)$ so that for all $f \in \compactsmooth(X)$ and $d \geq \kappaSobineq$
\begin{align*}
\norm{f}_\infty \ll \Sob_d(f).
\end{align*}
\item \label{item:soblip}
For any $d > \kappaSobineq$, $g \in G$ and $f \in \compactsmooth(X)$
\begin{align*}
\norm{f-g.f}_\infty \ll \metric(e,g)\Sob_d(f).
\end{align*}
\item \label{item:sobcont}
There exists $\ASobcont>0$ so that for any $d \geq 0$, $g \in G$, and $f \in \compactsmooth(X)$
\begin{align*}
\Sob_d(g.f) \ll_d \abs{g}^{\ASobcont d}\Sob_d(f).
\end{align*}
\item \label{item:sobprod}
For any $f_1,f_2 \in \compactsmooth(X)$
\begin{align*}
\Sob_d(f_1f_2) \ll_d \Sob_{d+\kappaSobineq}(f_1)\Sob_{d+\kappaSobineq}(f_2).
\end{align*}
\item \label{item:sobtrace}
For any $d$ there exists $d' > d$ such that
$\Tr(\Sob_{d'}|\Sob_d)< \infty$.
\end{enumerate}
We shall not use the relative trace estimate in \ref{item:sobtrace} explicitly, but mention here that it plays a crucial role in the proofs of the effective ergodic theorems in Propositions~\ref{prop:eff erg} and \ref{prop:ptsalongU-orbitswith many close gen pts}. 
We refer to \cite[\S5]{emv} for a thorough discussion influenced by work of Bernstein and Reznikov \cite{BernsteinReznikov}.

\subsection{Almost-invariant measures}

Let $\mu$ be a probability measure on $X$. Given $g \in G$ write $\mu^g$ for the measure given by $\mu^g(f) = \int f(xg) \de\mu(x)$ for $f \in \compactcont(X)$.
The following notion was introduced in \cite[\S3.10]{emv}.

\begin{definition}\label{def:almost invariant}
We say that $\mu$ is \emph{$\varepsilon$-almost invariant} (with respect to $\Sob_d$) under $g \in G$ if for all $f \in \compactsmooth(X)$
\begin{align*}
|\mu^g(f)-\mu(f)| \leq \varepsilon \Sob_d(f).
\end{align*}
Moreover, $\mu$ is $\varepsilon$-almost invariant under $Z \in \Fg$ if it is $\varepsilon$-almost invariant under $\exp(tZ)$ for $|t|\leq 2$.
It is $\varepsilon$-almost invariant under a Lie subalgebra $\Fs \subset \Fg$ if it is $\varepsilon$-almost invariant under all $Z \in \Fs$ with $\norm{Z} \leq 1$.
\end{definition}

Note that for $d \geq \kappaSobineq+1$ any measure $\mu$ is trivially $\ll \varepsilon$-almost invariant under any $g \in G$ with $\metric(g,e)\leq \varepsilon$ by \ref{item:soblip}, hence almost invariance is delicate for small elements of the group.
The following elementary property is verified in \cite[\S8.1]{emv} and allows one to pass from almost invariance under an element of $G$ to almost invariance under an element of the Lie algebra.
\begin{enumerate}[label=(AI)]
\item \label{item:alminv1}
If $\mu$ is $\varepsilon$-almost invariant under $\exp(Z)\in \Fg$ and $1 \leq c \leq 2\norm{Z}^{-1}$, then $\mu$ is $\ll (c\varepsilon+\norm{Z})$ almost-invariant under $cZ$.
\end{enumerate}
Note that the analogous statement in an $S$-arithmetic setup (see e.g.~\cite{emmv}) fails completely as small elements of $\G(\Q_p)$ for a prime $p$ cannot be iterated to become of size $\approx 1$.

The effective generation results established in \cite[\S7]{emv} imply the following proposition.

\begin{proposition}[Effective generation]\label{prop:effective generation}
There exists a constant $\kappaeffgen>0$ depending on $H$ with the following property.
Let $\Fs \subset \Fg$ be a Lie subalgebra containing $\Fh$ and let $\Fr$ be an undistorted $H$-invariant complement to $\Fs$.
Let $\mu$ be an $H$-invariant probability measure on $X$.

Suppose that $\mu$ is $\varepsilon$-almost invariant (with respect to $\Sob_d$) under $\Fs$ and under a unit vector $Z \in \Fr$.
Then there exists a constant $c_1(d)$ and a Lie subalgebra $\Fs_\ast \supset \Fh$ with $\dim(\Fs_\ast)>\dim(\Fs)$ so that $\mu$ is $c_1(d)\varepsilon^{\kappaeffgen}$-almost invariant under $\Fs_{\ast}$.
\end{proposition}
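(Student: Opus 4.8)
The plan is to reduce the statement to the effective generation results of \cite[\S7]{emv}, whose input is precisely almost invariance under a Lie algebra together with almost invariance under one extra vector, and whose output is almost invariance under the Lie algebra generated by these. First I would recall the structure-theoretic fact that, since $\Fs$ is a Lie algebra containing $\Fh$ and $Z\notin\Fs$ (which we may assume, the other case being trivial), the Lie algebra $\Fs'$ generated by $\Fs$ and $Z$ strictly contains $\Fs$; moreover, because everything is $H$-invariant and $\Fr$ is an $H$-module, one can arrange that $\Fs'\cap\Fr$ is spanned by boundedly many iterated brackets of $Z$ with elements of $\Fs$ (and of $Z$ with itself), each bracket being taken of bounded length depending only on $\dim\G$. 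This combinatorial bound is what will make the final exponent $\kappaeffgen$ depend only on $H$ (through $\dim\G$ and the bound $\kappaundistcompl$ on distortion of $H$-complements).

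The key steps, in order, are: (i) observe that $\varepsilon$-almost invariance under $\Fs$ gives $\varepsilon$-almost invariance under each $\exp(Z_1)$ with $Z_1\in\Fs$, $\norm{Z_1}\le 1$, and similarly for $Z$; (ii) combine almost invariances multiplicatively — if $\mu$ is $\varepsilon_1$-almost invariant under $g_1$ and $\varepsilon_2$-almost invariant under $g_2$ then $\mu$ is $\ll_d(\varepsilon_1\abs{g_2}^{\ASobcont d}+\varepsilon_2)$-almost invariant under $g_1g_2$, using \ref{item:sobcont} to control $\Sob_d(g_2.f)$ — so that commutators $[\exp(Z_1),\exp(Z)]$ inherit almost invariance with a controlled loss; (iii) iterate this over brackets of bounded length to obtain, for each generating bracket $W\in\Fs'\cap\Fr$, that $\mu$ is $\ll_d\varepsilon^{\star}$-almost invariant under $\exp(W)$ — here the Baker–Campbell–Hausdorff expansion of a group commutator in terms of the Lie bracket, truncated at bounded order with error absorbed by \ref{item:soblip}, is used, and the number of multiplications is bounded in terms of $\dim\G$; (iv) apply \ref{item:alminv1} to rescale these $W$'s to unit vectors, picking up at most a constant loss since $\norm{W}$ is bounded below by a fixed constant once $W\ne 0$ (or handling the case where all such $W$ are tiny separately — but genuine generators have $\norm{W}\gg 1$ after normalizing the input vectors, which one should set up carefully); (v) feed the resulting almost invariance under $\Fs$ and under a spanning set of $\Fs'$ into \cite[\S7]{emv} to conclude that $\mu$ is $c_1(d)\varepsilon^{\kappaeffgen}$-almost invariant under all of $\Fs'$, and set $\Fs_\ast=\Fs'$.

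The main obstacle I anticipate is bookkeeping the accumulation of exponents and height-dependent factors uniformly. Each multiplication of almost-invariant elements costs a factor $\abs{g}^{\ASobcont d}$ by \ref{item:sobcont}, and each truncation of a BCH expansion costs a Lipschitz error by \ref{item:soblip}; chaining $O_{\dim\G}(1)$ such operations turns $\varepsilon$ into $\varepsilon^{\kappaeffgen}$ with $\kappaeffgen$ depending only on $H$, but one must check that the group elements $\exp(W)$ arising stay of bounded size (which they do, since $\Fs'\cap\Fr\subset\Fg$ and we only ever exponentiate vectors of norm $\ll 1$) so that the $\abs{g}^{\ASobcont d}$ factors are absorbed into $c_1(d)$. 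A secondary subtlety is that the complement $\Fr$ is only $H$-invariant, not $\Fs$-invariant, so brackets of elements of $\Fs$ with $Z\in\Fr$ need not lie in $\Fr$; one decomposes them along $\Fs\oplus\Fr$ using the undistorted splitting, discards the $\Fs$-component (under which $\mu$ is already almost invariant), and retains the $\Fr$-component, with the distortion constant $\kappaundistcompl$ ensuring this projection does not blow up norms. Once this is set up, the cited results from \cite[\S7]{emv} do the remaining work.
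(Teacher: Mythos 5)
Your proposal matches the paper's approach: the paper's proof is the one-line remark that the proposition ``follows from the proof of \cite[Prop.~8.1]{emv}, see also footnote~21 therein,'' and you likewise reduce to the effective-generation machinery of \cite[\S7]{emv} after decomposing brackets along $\Fs\oplus\Fr$ using the undistorted splitting (the point of footnote~21 being exactly that $\Fr$ is only $H$-invariant, which you also flag). One caveat in your step (iv): the assertion that ``genuine generators have $\norm{W}\gg 1$ after normalizing the input vectors'' is not true in general --- iterated brackets of unit vectors can be arbitrarily small, and with $\Fs$ varying there is no a priori lower bound. Supplying a quantitative substitute (using the structure of $H$-representations to reach a spanning set of $\Fs_\ast$ with controlled norms and controlled losses under \ref{item:alminv1}) is precisely the nontrivial content of \cite[\S7]{emv}, so your steps (i)--(iv) cannot be completed independently of (v); but since you ultimately defer to that reference, the argument closes in the same way the paper's does.
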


\begin{proof}
Follows from the proof of \cite[Prop.~8.1]{emv}, see also footnote 21 therein.
\end{proof}

\begin{remark}\label{rem:notnormal}
We may and will assume that any proper intermediate Lie algebra $\Fh \subset \Fs \subsetneq \Fg$ is not normal in $\Fg$.
To that end, note that if $H$ has a periodic orbit in $X$ some conjugate $gHg^{-1}$ is defined over $\Q$ (or, more precisely, is equal to $\BH(\R)^+$ for some $\Q$-subgroup $\BH \in \G$).
Hence, the intersection of Lie ideals of $\Fg$ containing $\Fh$ is defined over $\Q$.
But we assumed that no proper Lie ideal defined over $\Q$ contains $\Fh$.
To summarize, if $H$ has a periodic orbit in $X$ there is no proper intermediate Lie ideal $\Fh \subset \Fs \subsetneq \Fg$ under our standing assumption on $H$.
\end{remark}

\subsection{Exterior representations and heights of subgroups}\label{sec:wedge rep}
For any $\Q$-subgroup $\BM < \G$ we define the $\Q$-vector space
\begin{align*}
\wedgespace{\BM} = \bigwedge^{\dim(\BM)} \mathfrak{g}.
\end{align*}
The integral structure on $\Fg$ induces an integral structure on $\wedgespace{\BM}$.
The Lie algebra $\Fh$ of $\BM$ defines the line $\bigwedge^{\dim(\BM)} \Fh$ in $\wedgespace{\BM}$ and we choose a primitive integral vector $\wedgevec{\BM}$ in that line (uniquely determined up to signs).
Lastly, we write $\wedgerep{\BM}$ for the (exterior) representation of $\G$ on $\wedgespace{\BM}$ induced by the adjoint representation and
\begin{align*}
\wedgeorbit{\BM}: \G \to \wedgespace{\BM},\ g \mapsto\wedgerep{\BM}(g^{-1}).v_{\BM}
\end{align*}
for the (right-)orbit map. When it is clear from context, we write simply $g.v = \wedgerep{\BM}(g)v$ for the action.
The \emph{height} of the subgroup $\BM$ is defined to be
\begin{align*}
\height(\BM) :=  \norm{\wedgevec{\BM}}
\end{align*}
where $\norm{\cdot}$ is the Euclidean norm on $\wedgespace{\BM}$ induced by the choice of Euclidean norm on $\Fg< \mathfrak{sl}_N(\R)$.

\subsubsection{Discriminant of an orbit}\label{sec:discriminant}

Recall that a connected $\Q$-subgroup $\BM<\SL_N$ is said to be of class $\classH$ if its radical is unipotent.
Note that a $\Q$-subgroup $\BM < \G$ is of class $\classH$ if and only if it has no nontrivial $\Q$-characters. 
By a theorem of Borel and Harish-Chandra \cite{BorelHarishChandra}, any orbit of the form $\Gamma M g$ for $M = \BM(\R)$ and $g \in G$ thus possesses a unique $g^{-1}Mg$-invariant probability measure. In particular, $\Gamma M g$ is closed.

For $\BM \in \classH$ a subgroup of $\G$ and $g \in G$ the \emph{discriminant} of the orbit $\Gamma \BM(\R) g$ is
\begin{align*}
\disc(\Gamma \BM(\R) g) := \norm{\wedgeorbit{\BM}(g)}.
\end{align*}
Note that the discriminant depends only on the orbit as $\wedgeorbit{\gamma\BM\gamma^{-1}}(\gamma g)= \wedgeorbit{\BM}(g)$ for any $\gamma\in \Gamma$.

\begin{remark}
The discriminant $\disc(\Gamma \BM(\R) g)$ for $\BM$ semisimple is comparable to the volume $\vol(\Gamma \BM(\R) g)$.
We shall not need such a comparison in the current article and hence omit a more precise statement -- see \cite[\S17]{emv} for a discussion.
\end{remark}



%
%
%
%
%
\section{Effective ergodic theorem}\label{sec:efferg}
Let $\mu$ be the $H$-invariant probability measure on a periodic $H$-orbit in $X$.
In this section, we use the uniform spectral gap of the $H$-representation on $L^2_0(\mu)$ to exhibit an abundance of 'generic' points (a version of an effective ergodic theorem).
While spectral gap is also used in the final step of the proof (Proposition~\ref{prop:alminv under G} below), the application of spectral gap is deepest here.
This is in contrast to \cite{emv} where the gap was also used for an effective closing lemma for intermediate groups (over which the current article has no control).

By Clozel's property $(\tau)$, there is a constant $p_G>1$ depending only on $\dim(\G)$ such that the $H$-represen\-tation on $L^2_0(\mu)$ is $\frac{1}{p_G-1}$-tempered\footnote{Here, a unitary representation $\pi$ of $H$ is $\frac{1}{m}$-tempered for $m \in \mathbb{N}$ if $\pi^{\otimes m}$ is tempered or equivalently if there is a dense set of vectors $\mathcal{V}$ such that for all $v,w \in \mathcal{V}$ the matrix coefficient $H \ni h \mapsto \langle \pi(h)v,w\rangle$ is in $L^{2m+\varepsilon}$ for all $\varepsilon>0$.} --- see e.g.~\cite[Lemma 6.6]{emv}. We set $\tempered= 20(p_G+1)$.

We define an effective notion of Birkhoff genericity following \cite{emv}.

\begin{definition}\label{def:discrepancy}
For any $f \in C_c(X)$ and $n>0$ the \emph{discrepancy} $D_n(f)$ of $f$ (with respect to $U$ and $\mu$) is given by
\begin{align*}
D_n(f) = \frac{1}{(n+1)^{\tempered}-n^{\tempered}} \int_{n^{\tempered}}^{(n+1)^{\tempered}} f(xu_t)\de t - \int f \de \mu.
\end{align*}
Given positive numbers $k_1 < k_2$ a point $x \in X$ is said to be \emph{$[k_1,k_2]$-generic} (for $U$ and $\mu$) with respect to $\Sob_d$ if 
\begin{align*}
|D_n(f)(x)| \leq \tfrac{1}{n}\Sob_d(f)
\end{align*}
for all $n \in [k_1,k_2] \cap \Z$ and all $f \in C_c^\infty(X)$.
We say that $x$ is \emph{$k_0$-generic} if it is $[k_0,k_1]$-generic for all $k_1 > k_0$.
\end{definition}

The following ergodic theorem is established in {\cite[Prop.~9.1]{emv}}.

\begin{proposition}\label{prop:eff erg}
There is $d_1\geq 1$ (depending only on $G,H,U$ and not on $\mu$) so that for any $d \geq d_1$ the $\mu$-measure of the set of points which are not $k_0$-generic with respect to $\Sob_d$ is $\ll_d k_0^{-1}$.
\end{proposition}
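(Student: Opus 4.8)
The plan is to run the classical second moment (variance) argument for the averages of a test function along $U$, made effective by the quantitative decay of matrix coefficients that uniform temperedness supplies, and then to upgrade the resulting bound for a single test function to a bound for the supremum over all test functions by means of the relative trace estimate~\ref{item:sobtrace}. This is essentially the argument of~\cite[Prop.~9.1]{emv}.

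First I would bound $\|D_n(f)\|_{L^2(\mu)}$ for a fixed $f\in\compactsmooth(X)$. Since constants have vanishing discrepancy, $D_n(f)=D_n(\widehat f)$, where $\widehat f:=f-\int f\de\mu$ is regarded as an element of $L^2_0(\mu)$. Because $\mu$ is the Haar measure on a periodic $H$-orbit, Clozel's property $(\tau)$ yields --- uniformly over all such $\mu$ --- an effective polynomial decay of matrix coefficients along $U$: there are $\alpha=\alpha(p_G)\in(0,1)$ and a degree $d_2=d_2(G,H,U)\geq\kappaSobineq$ with
\begin{align*}
\big|\langle u_t.\widehat f,\widehat f\rangle_{L^2(\mu)}\big|\ \ll\ (1+|t|)^{-\alpha}\,\Sob_{d_2}(f)^2\qquad(t\in\R),
\end{align*}
cf.~\cite[\S6]{emv} (this is the step carrying the deep spectral input: one restricts the tempered $H$-representation to the fixed $\SL_2(\R)\to H$ and applies an effective Howe--Moore estimate). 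Writing $L_n:=(n+1)^{\tempered}-n^{\tempered}\asymp n^{\tempered-1}$ for the length of the averaging window, expanding $\|D_n(f)\|_{L^2(\mu)}^2$ as the double integral of $\langle u_{t-s}.\widehat f,\widehat f\rangle_{L^2(\mu)}$ over $[n^{\tempered},(n+1)^{\tempered}]^2$ (using $U$-invariance of $\mu$), and estimating with the decay bound and $\int_0^{L}(1+\tau)^{-\alpha}\de\tau\ll L^{1-\alpha}$, one obtains
\begin{align*}
\|D_n(f)\|_{L^2(\mu)}^2\ \ll\ L_n^{-\alpha}\,\Sob_{d_2}(f)^2\ \ll\ n^{-\alpha(\tempered-1)}\,\Sob_{d_2}(f)^2 .
\end{align*}

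Next I would pass from one $f$ to all of them and sum over the scales $n$. By~\ref{item:sobtrace} (and since $d_2\geq\kappaSobineq$) we may fix $d_1>d_2$ so that $\Tr(\Sob_d\mid\Sob_{d_2})<\infty$ for every $d\geq d_1$. Fix such a $d$, choose an orthonormal basis $(\phi_j)_j\subset\compactsmooth(X)$ of the Hilbert space underlying $\Sob_d$, and put $Q_n(x):=\sup\{|D_n(f)(x)|:f\in\compactsmooth(X),\ \Sob_d(f)\leq1\}$. Using $\|f\|_\infty\ll\Sob_d(f)$ from~\ref{item:sobineq} one sees that $f\mapsto D_n(f)(x)$ is $\Sob_d$-bounded for every $x$, whence $Q_n(x)^2=\sum_j|D_n(\phi_j)(x)|^2$; integrating over $x$, invoking the bound from the previous step, and using $\sum_j\Sob_{d_2}(\phi_j)^2=\Tr(\Sob_d\mid\Sob_{d_2})<\infty$ gives
\begin{align*}
\int_X Q_n(x)^2\,\de\mu(x)=\sum_j\|D_n(\phi_j)\|_{L^2(\mu)}^2\ \ll_d\ n^{-\alpha(\tempered-1)} .
\end{align*}
A point $x$ fails to be $k_0$-generic with respect to $\Sob_d$ precisely when $Q_n(x)>1/n$ for some integer $n\geq k_0$, so by Chebyshev's inequality
\begin{align*}
\mu\big(\{x:x\text{ is not }k_0\text{-generic}\}\big)\ \leq\ \sum_{n\geq k_0}n^{2}\int_X Q_n(x)^2\,\de\mu\ \ll_d\ \sum_{n\geq k_0}n^{\,2-\alpha(\tempered-1)} .
\end{align*}
As $\tempered=20(p_G+1)$ while $\alpha$ depends only on $p_G$, the product $\alpha(\tempered-1)$ is at least $4$ --- this is exactly what the generous choice of $\tempered$ secures --- so the last sum is $\ll k_0^{\,3-\alpha(\tempered-1)}\leq k_0^{-1}$, as asserted.

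The substantive point, and the one I expect to be the main obstacle, is the very first input: the effective, Sobolev-explicit decay of matrix coefficients of $L^2_0(\mu)$ along $U$, holding \emph{uniformly} over all periodic $H$-orbits. This is where Clozel's property $(\tau)$ together with quantitative $\SL_2(\R)$-representation theory does the real work. Everything afterwards is the routine variance computation packaged with the trace-class estimate; the only genuine bookkeeping is to keep the chain of Sobolev degrees $\kappaSobineq\leq d_2<d_1\leq d$ consistent and to make sure the decay exponent $\alpha(\tempered-1)$ dominates the factor $n^{2}$ lost in the union bound over all scales $n\geq k_0$ --- which is precisely why $\tempered$ must be taken as large as $20(p_G+1)$.
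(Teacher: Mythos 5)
Your proposal is correct and reproduces the argument of \cite[Prop.~9.1]{emv}, which is exactly what the paper cites for this statement without reproducing the details: effective decay of matrix coefficients for the $U$-action (from uniform temperedness) gives the second-moment bound $\|D_n(f)\|_{L^2(\mu)}^2 \ll n^{-\alpha(\tempered-1)}\Sob_{d_2}(f)^2$, the trace-class estimate~\ref{item:sobtrace} upgrades this to a uniform bound over the $\Sob_d$-unit ball, and the generous choice $\tempered = 20(p_G+1)$ makes the Chebyshev-plus-union-bound sum converge at rate $k_0^{-1}$. This is the same route as the paper's source; the bookkeeping of Sobolev degrees and the monotonicity of the relative trace in $d$ are handled correctly.
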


We remark that the proof of Proposition~\ref{prop:eff erg} does \emph{not} use the centralizer assumption while its refinement for almost invariant measures \cite[Prop.~9.2]{emv} does.
Indeed, this extension requires an effective generation result for large balls in intermediate Lie subgroups $H < S < G$ (see also \cite[Lemma~8.2]{emv}).
In the context of \cite{emv} these intermediate groups are automatically semisimple as opposed to the current article.

Here, we use an adapted version which does not rely on the centralizer assumption, but follows by the same methods.

\begin{proposition}\label{prop:ptsalongU-orbitswith many close gen pts}
Let $d \geq \kappaSobineq + 1$.
There exist $\beta=\beta(d)>0$, $d' > d$ and $\varepsilon_0>0$ with the following property.
Suppose that $\mu$ is $\varepsilon$-almost invariant under a Lie subalgebra $\Fs \supset \Fh$ with respect to $\Sob_d$ for some $\varepsilon \leq \varepsilon_0$.

Let $\Omega \subset \Fs$ be a compact neighborhood of $0$ contained in the unit ball.
For any $t_0 >0$ and $k_0>0$, the fraction of points
\begin{align*}
(x,t,Z) \in X \times [0,t_0] \times \Omega 
\end{align*}
for which $xu_t\exp(Z)$ is not $[k_0,\varepsilon^{-\beta}]$-generic with respect to $\Sob_{d'}$ is $\ll_d k_0^{-1}$.
\end{proposition}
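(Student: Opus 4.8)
The plan is to deduce this from the unconditional ergodic theorem in Proposition~\ref{prop:eff erg} by a change-of-variables argument that exploits the $\varepsilon$-almost invariance under $\Fs$ to transfer genericity from a point to the nearby points $xu_t\exp(Z)$. First I would observe that, since $\Omega \subset \Fs$ lies in the unit ball, the map $(x,Z)\mapsto x\exp(Z)$ on $X\times\Omega$ can be controlled quantitatively: by iterating the definition of almost invariance (using \ref{item:alminv1} to pass between group elements and Lie algebra elements, and $\Sob_d$-continuity \ref{item:sobcont}), the pushforward under $\exp$ of (Lebesgue measure on $\Omega$ times $\mu$) is within $\ll_d \varepsilon$ of $\mu$ when tested against functions whose Sobolev norm is controlled. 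Concretely, for a fixed $f\in\compactsmooth(X)$ and a fixed $n$, the quantity $D_n(f)(x\exp(Z))$ is a function of $(x,Z)$ whose integral against $\de\mu\,\de Z$ differs from $\int D_n(f)\,\de\mu$ by $O_d(\varepsilon\,\Sob_{d''}(f))$ for a suitable $d'' > d$; and $\int D_n(f)\,\de\mu = 0$ essentially by definition of the discrepancy and $U$-invariance of $\mu$ (more precisely one uses that $\mu$ is $U$-invariant so the time-average has mean zero). The $U$-flow direction $[0,t_0]$ is harmless: $\mu$ is genuinely $U$-invariant, so averaging over $t\in[0,t_0]$ does not change mean-zero statements, it only rescales the measure of the product space.

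The second step is to upgrade this "$L^1$-small on average" statement to a "small-measure exceptional set" statement via Markov/Chebyshev, but uniformly over all $f\in\compactsmooth(X)$ and all $n\in[k_0,\varepsilon^{-\beta}]$. This is exactly the place where the relative trace estimate \ref{item:sobtrace} enters, just as in \cite[\S9]{emv}: one expands $f$ in an orthonormal basis adapted to a pair of Sobolev norms $\Sob_{d'} \ll \Sob_{d''}$ with $\Tr(\Sob_{d''}\mid\Sob_{d'})<\infty$, estimates the second moment $\int |D_n(f)(x\exp(Z))|^2\,\de\mu(x)\,\de Z$ for each basis vector using decay of matrix coefficients for the $\tempered$-tempered $H$-representation on $L^2_0(\mu)$ (the exponent $\tempered=20(p_G+1)$ is chosen exactly so these sums converge), and sums up. The almost-invariance error contributes an extra additive term of size $O_d(\varepsilon^{\text{something}})$ coming from the transfer in the first step, which is why one must cap $n$ at $\varepsilon^{-\beta}$ for a small enough $\beta=\beta(d)$: for $n$ beyond that threshold, the $\frac1n\Sob_{d'}(f)$ tolerance in the definition of $[k_0,\varepsilon^{-\beta}]$-genericity becomes smaller than the almost-invariance error and the argument breaks. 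A union bound over the (effectively finitely many, after discretizing) values of $n$ then gives that the set of $(x,t,Z)$ for which $xu_t\exp(Z)$ fails to be $[k_0,\varepsilon^{-\beta}]$-generic with respect to $\Sob_{d'}$ has measure $\ll_d k_0^{-1}$.

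The main obstacle I anticipate is making the "transfer of genericity across an $\exp(Z)$-translate" step quantitatively honest while keeping the Sobolev degree loss bounded: one is effectively comparing $D_n(f)(x\exp(Z))$ with $D_n(f)(x)$, and naively this costs a factor of $\metric(e,\exp(Z))\Sob_{d+1}(f)$ per unit of displacement via \ref{item:soblip}, which is $O(1)$ and thus useless on its own. The resolution — and this is the crux — is not to compare pointwise but to compare the \emph{measures}: one uses almost invariance of $\mu$ under $\Fs$ to say that $\int g(x\exp(Z))\,\de\mu(x)\,\de Z$ is close to $\int g\,\de\mu$ for $g = |D_n(f)|^2$ or $g=D_n(f)$, which only costs $\varepsilon\,\Sob_{d''}(g)$, and then \ref{item:sobprod} controls $\Sob_{d''}(|D_n(f)|^2)$ in terms of $\Sob_{d''+\kappaSobineq}(D_n(f))$, which in turn is controlled by $\Sob_{d''+\kappaSobineq}(f)$ up to a polynomial factor in $n^\tempered$ from the integration in the definition of $D_n$ (here one needs $g\in G\mapsto\Sob$ continuity \ref{item:sobcont} and that $u_t$ for $t\le (n+1)^\tempered$ has $|u_t|$ polynomial in $n$). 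Tracking all these degree increments $d \rightsquigarrow d' \rightsquigarrow d''$ and verifying they can be chosen consistently with \ref{item:sobtrace} is bookkeeping that parallels \cite[Prop.~9.2]{emv} closely; the one genuinely new point relative to \cite{emv} is that $\Fs$ is allowed to be an arbitrary Lie subalgebra (possibly with center, not semisimple), but since we never try to generate large balls inside an intermediate subgroup here — we only average over the \emph{fixed} small neighborhood $\Omega\subset\Fs$ using almost invariance of $\mu$ — the centralizer-free argument goes through verbatim.
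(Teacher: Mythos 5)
Your proposal is correct and follows essentially the same strategy as the paper: reduce the $[0,t_0]$-integral by genuine $U$-invariance, use $\varepsilon$-almost invariance of $\mu$ under $\Fs$ (applied directly to $g=|D_n(f)|^2$) together with \ref{item:sobprod} and \ref{item:sobcont} to bound $\Sob_d(|D_n(f)|^2) \ll_d n^\star \Sob_{d+\kappaSobineq}(f)^2$, cap $n \leq \varepsilon^{-\beta}$ so the error is $\ll n^{-4}\Sob_{d+\kappaSobineq}(f)^2$, and finish via the trace/second-moment argument of \cite[Prop.~9.1]{emv}. One small overcomplication: you invoke \ref{item:alminv1} to "iterate" almost invariance, but this is unnecessary — since $\Omega$ lies in the unit ball, $\varepsilon$-almost invariance under $\Fs$ already gives $|\mu^{\exp(Z)}(F)-\mu(F)|\leq\varepsilon\Sob_d(F)$ for all $Z\in\Omega$ directly from Definition~\ref{def:almost invariant}.
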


Here, we take the Lebesgue measure on $\Omega$.

\begin{proof}[Proof of Proposition~\ref{prop:ptsalongU-orbitswith many close gen pts}]
The proof is largely analogous to the proof of \cite[Prop.~9.2]{emv}, we give it for completeness.
Invariance of $\mu$ under $U$ implies for $n \in \N$ 
\begin{align*}
\frac{1}{t_0\vol(\Omega)} &\int_{X\times [0,t_0] \times \Omega} |D_n(f)(xu_t\exp(Z))|^2 \de \mu(x) \de t\de Z \\
&= \frac{1}{\vol(\Omega)} \int_{X \times \Omega} |D_n(f)(x\exp(Z))|^2 \de \mu(x) \de Z\\
&=  \frac{1}{\vol(\Omega)} \int_{X \times \Omega} |D_n(f)(x)|^2 \de \mu^{\exp(Z)}(x) \de Z.
\end{align*}
By $\varepsilon$-almost invariance $\mu$ under $\Fs$ (w.r.t.~$\Sob_d$) we have that $|\mu^{\exp(Z)}(F)-\mu(F)|\leq \varepsilon \Sob_d(F)$ for all $F \in C_c^\infty(X)$ and $Z \in \Omega$.
This implies for any $Z \in \Omega$
\begin{align*}
\int_{X} |D_n(f)(x)|^2 \de \mu^{\exp(Z)}(x)
&\leq \int_{X} |D_n(f)(x)|^2 \de \mu(x) + \varepsilon \Sob_d\big(|D_n(f)|^2\big).
\end{align*}
By \ref{item:sobprod} and \ref{item:sobcont}
\begin{align*}
\Sob_d\big(|D_n(f)|^2\big)
\ll_d  \Sob_{d+\kappaSobineq}(D_n(f))^2
\ll_d n^\star \Sob_{d+\kappaSobineq}(f)^2.
\end{align*}
We obtain  $\varepsilon \Sob_d\big(|D_n(f)|^2\big) \ll n^{-4}\Sob_{d+\kappaSobineq}(f)^2$ for an appropriate choice of $\beta$ and $n \leq \varepsilon^{-\beta}$.
From here, one can conclude the proof as in \cite[Prop.~9.1]{emv} (increasing $d$).
\end{proof}
\section{Effective avoidance results}\label{sec:diopoints}

\subsection{Diophantine points}
We recall the notion of Diophantine points introduced by Lindenstrauss, Margulis, Mohammadi, and Shah in \cite{effectiveavoidance} in slightly adapted notation.
Fix a decreasing function
\begin{align*}
\dioeps: (0,\infty)\to (0,1).
\end{align*}

\begin{definition}[Diophantine points]\label{def:diophantine}
A point $x = \Gamma g\in X$ is $(\dioeps,T)$-Diophantine if for any proper non-trivial subgroup $\BM < \G$ of class $\classH$ we have
\begin{align*}
\norm{\ugen \wedge \wedgeorbit{\BM}(g)}\geq \dioeps(\norm{\wedgeorbit{\BM}(g)})
\end{align*}
whenever $\norm{\wedgeorbit{\BM}(g)} < T$.
\end{definition}

For convenience of the reader and to introduce constants we recall here results proven in \cite{effectiveavoidance}.
We include the clarifiying dependencies of the constants while noting that they carry little importance within this article.

\begin{theorem}[{\cite[Thm.~3.2]{effectiveavoidance}}]\label{thm:effectiveavoidance}
There exist constants $\AeffavT\geq 1$ and $\kappaeffav\in (0,1)$ depending on $\dimemb$, $\Ceffav>0$ depending on $\dimemb$ and $\height(\G)$, and $\Ceffavgeom>0$ depending on $\dimemb$, $\height(\G)$ and $\Gamma$ with the following property.
Let $g \in G$, let $T\geq 0$, let $\eta \in (0,\frac{1}{2})$, and let $t_1 < t_2$. 
Suppose that for all $s>0$
\begin{align}\label{eq:dioepsassum}
\dioeps(s) \leq \tfrac{1}{\Ceffavgeom}s^{-\AeffavT}\eta^{\AeffavT}.
\end{align}
Then at least one of the following options hold:
\begin{enumerate}[(1)]
\item\label{item:effav1}
We have
\begin{align*}
\big|\{t\in [t_1,t_2]: \Gamma g u_t \not\in X_\eta \text{ or } \Gamma g u_t \text{ not } (\dioeps,T)\text{-Diophantine} \}  \big|
< \Ceffavgeom \eta^{\kappaeffav} |t_2-t_1|.
\end{align*}
\item\label{item:effav2}
There exists a nontrivial proper subgroup $\BM < \G$ of class $\classH$ so that for all $t \in [t_1,t_2]$
\begin{align*}
\norm{\wedgeorbit{\BM}(gu_t)} &\leq (\Ceffav \abs{g}^{\AeffavT}+ \Ceffavgeom T^{\AeffavT}) \eta^{-\AeffavT},\\
\norm{\ugen \wedge \wedgeorbit{\BM}(gu_t)}
&\leq |t_2-t_1|^{-\kappaeffav} (\Ceffav \abs{g}^{\AeffavT}+ \Ceffavgeom T^{\AeffavT}) \eta^{-\AeffavT}.
\end{align*}
\item\label{item:effav3} 
There exists a nontrivial proper normal subgroup $\BM < \G$ with
\begin{align*}
\norm{\ugen \wedge \wedgevec{\BM}}
\leq \dioeps\Big(\height(\BM)^{1/\AeffavT}\eta/\Ceffavgeom\Big)^{1/\AeffavT}.
\end{align*}
\end{enumerate}
\end{theorem}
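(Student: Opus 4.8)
The statement is the effective avoidance theorem of Lindenstrauss, Margulis, Mohammadi and Shah, and the plan is to deduce it by marrying the Kleinbock-Margulis theory of $(C,\alpha)$-good functions on the line with an \emph{effective} version of the Dani-Margulis linearization technique \cite{DaniMargulis93}. The first observation is that for each proper $\Q$-subgroup $\BM<\G$ of class $\classH$ the curve $t\mapsto\wedgeorbit{\BM}(gu_t)=\wedgerep{\BM}(u_{-t})\wedgeorbit{\BM}(g)$ is a polynomial curve in the fixed finite-dimensional space $\wedgespace{\BM}$ whose degree is bounded in terms of $\dimemb$ alone (since $u_t$ is unipotent and $\wedgerep{\BM}$ is algebraic); hence each coordinate, and then each of $t\mapsto\norm{\wedgeorbit{\BM}(gu_t)}$ and $t\mapsto\norm{\ugen\wedge\wedgeorbit{\BM}(gu_t)}$, is $(C,\alpha)$-good on every subinterval with $C,\alpha$ depending only on $\dimemb$. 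The cusp excursions fit the same picture: $\Gamma gu_t\notin X_\eta$ precisely when $\Ad((gu_t)^{-1})$ sends some nonzero lattice vector in $\Fg(\Z)$ to norm $<\eta$, again a condition governed by $(C,\alpha)$-good functions, and a $\Q$-span of such small vectors that persists over a long time interval is quantitatively a $\Q$-subalgebra, hence attached to an obstruction subgroup of class $\classH$.

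The engine is the one-dimensional dichotomy for a single candidate $\BM$. Applying the basic Kleinbock-Margulis lemma to $\phi_\BM(t)=\norm{\wedgeorbit{\BM}(gu_t)}$ with threshold $\rho_\BM\approx(\Ceffav\abs{g}^{\AeffavT}+\Ceffavgeom T^{\AeffavT})\eta^{-\AeffavT}$ gives: either $\phi_\BM<\rho_\BM$ on all of $[t_1,t_2]$ --- the ``trapped'' case, which is the first displayed bound of alternative \eqref{item:effav2} --- or $\sup_{[t_1,t_2]}\phi_\BM\geq\rho_\BM$, in which case the sublevel set of $\phi_\BM$ at the smaller scale has measure $\leq C\eta^{\kappaeffav}\abs{t_2-t_1}$. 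The same run on $\psi_\BM(t)=\norm{\ugen\wedge\wedgeorbit{\BM}(gu_t)}$ supplies, in the trapped case, the second displayed bound of \eqref{item:effav2} with its decay factor $\abs{t_2-t_1}^{-\kappaeffav}$, while otherwise $\psi_\BM$ is large somewhere and its small sublevel set feeds the ``not Diophantine'' part of \eqref{item:effav1}; here hypothesis \eqref{eq:dioepsassum} is exactly what forces the Diophantine threshold $\dioeps(\norm{\wedgeorbit{\BM}(gu_t)})$ below the $(C,\alpha)$-good scale, so the set of $t$ where Diophantine-ness fails through $\BM$ is genuinely of measure $\ll\eta^{\kappaeffav}\abs{t_2-t_1}$. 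Summing the ``good'' contributions of the finitely many $\BM$ that need testing, together with the cusp estimate, yields \eqref{item:effav1}; if instead \emph{some} $\BM$ is trapped we are in \eqref{item:effav2}, or --- when that $\BM$ may be taken normal --- in \eqref{item:effav3}: for a normal $\BM$ the line $\bigwedge^{\dim\BM}\Fh$ is $\Ad(\G)$-invariant, so $\norm{\wedgeorbit{\BM}(gu_t)}$ and $\norm{\ugen\wedge\wedgeorbit{\BM}(gu_t)}$ are independent of $t$ and the trapped condition collapses to the displayed bound on $\norm{\ugen\wedge\wedgevec{\BM}}$, the argument of $\dioeps$ being read off from the thresholds above.

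The delicate point --- where the theorem goes beyond assembling standard ingredients --- is upgrading ``each individual $\BM$ contributes a small bad set'' to a bound \emph{uniform} over the a priori infinitely many subgroups of class $\classH$, and I would handle this in two stages. First, an effective finiteness input: only subgroups of bounded height can be trapped at the relevant scale, and since $\wedgevec{\BM}$ is a primitive integral vector and $\wedgeorbit{\BM}$ depends only on the $\Gamma$-orbit, reduction theory for $\Gamma$ shows that only polynomially-in-scale many $\Gamma$-conjugacy classes need be tested --- this is what makes $\Ceffav$ depend on $\dimemb$ and $\height(\G)$ and $\Ceffavgeom$ in addition on $\Gamma$. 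Second, a poset reduction: the intersection of two class-$\classH$ subgroups is again of class $\classH$, and a Pl\"ucker-coordinate estimate shows that if the orbit vectors of $\BM_1$ and $\BM_2$ both stay small over $[t_1,t_2]$ then so does that of $\BM_1\cap\BM_2$ (and dually of the subgroup they generate), up to a bounded multiplicative loss; stratifying by $\dim\BM$ one passes to a \emph{minimal} trapped subgroup and inducts on dimension, keeping the combinatorics finite. I expect this effectivization of linearization --- replacing the compactness arguments of Dani-Margulis by explicit scale-by-scale bookkeeping of the lattice of obstruction subgroups --- to be the main obstacle and the technical heart of \cite{effectiveavoidance}; by contrast the pure non-divergence statement (the restriction of \eqref{item:effav1} to $\Gamma gu_t\in X_\eta$) is a fairly direct application of Kleinbock-Margulis non-divergence, and the polynomial-curve and $(C,\alpha)$-good inputs are routine.
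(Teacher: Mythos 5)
This statement is not proved in the paper at all: it is imported verbatim as \cite[Thm.~3.2]{effectiveavoidance}, and the text around it says explicitly that the result is ``recalled'' from that reference merely to introduce notation and constants. So there is no internal proof to compare your attempt against, and a full proof is out of scope for this paper.

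As a reconstruction of the argument in \cite{effectiveavoidance}, your outline has the right skeleton (polynomial orbit curves in $\wedgespace{\BM}$, $(C,\alpha)$-good functions, Kleinbock--Margulis non-divergence, effective linearization, reduction-theoretic finiteness and the lattice of class-$\classH$ obstruction subgroups), and you correctly identify the uniformity over the infinite family of subgroups as the technical heart. Two points, however, are genuinely glossed over. First, the decay factor $|t_2-t_1|^{-\kappaeffav}$ in the second inequality of alternative \eqref{item:effav2} is not what ``the same run'' of the Kleinbock--Margulis lemma on $\psi_\BM(t)=\norm{\ugen\wedge\wedgeorbit{\BM}(gu_t)}$ produces: that lemma gives an either/or of the form ``bounded throughout'' versus ``sublevel set small,'' neither of which yields a uniform pointwise bound on $\psi_\BM$ that improves as $|t_2-t_1|$ grows. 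The decay comes from the fact that $t\mapsto\wedgeorbit{\BM}(gu_t)$ is a nonconstant polynomial of bounded degree; trapping $\norm{\wedgeorbit{\BM}(gu_t)}$ below a fixed threshold on a long interval forces the non-$U$-fixed components (precisely what $\ugen\wedge(\cdot)$ detects) to be polynomially small in the interval length, and this needs its own elementary-but-quantitative argument about polynomials of bounded degree, not a second application of the sublevel-set estimate. Second, the ``poset reduction'' step (passing to a minimal trapped subgroup via intersections and generated subgroups with controlled height loss) is asserted rather than substantiated, and keeping both the degree bounds and the $(C,\alpha)$-good constants uniform under this reduction is delicate; in the actual reference this is where most of the work lives. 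For the purposes of this paper, citing \cite[Thm.~3.2]{effectiveavoidance} as done is the appropriate move.
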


We also record the following simple corollary to Theorem~\ref{thm:effectiveavoidance}.
As in the introduction, let $\minht(xH)$ be the smallest height in the cusp attained on a periodic $H$-orbit $xH$.

\begin{corollary}[Diophantine points on $H$-orbits]\label{cor:dio on H-orbit}
There exist $\AeffavcorH\geq 1$ depending on $\dimemb$ and $\CeffavcorH >1$ depending on $\dimemb$, $\height(\G)$, $\Gamma$ and $U$ with the following property.
Let $\mu=\mu_{xH}$ be the Haar probability measure on a periodic orbit $xH= \Gamma g H$.
Let $T \geq 1$, $\eta \in (0,\frac{1}{2})$ and suppose that
\begin{align}\label{eq:dioepsest Hcase}
\dioeps(s)\leq \tfrac{1}{\CeffavcorH} s^{-\AeffavcorH} \eta^{\AeffavcorH}
\end{align}
Then at least one of the following two options holds.
\begin{enumerate}
\item We have
\begin{align*}
\mu\big( \{y\in X: y \text{ is not } (\dioeps,T)\text{-Diophantine or } y \not\in X_{\eta}\}\big)
\leq  \Ceffavgeom \eta^{\kappaeffav}.
\end{align*}
\item There exists a proper subgroup $\mathbf{M} \in \classH$ of $\G$ such that the orbit $\Gamma \BM(\R) g$ contains $xH$ and satisfies
\begin{align*}
\disc(\Gamma \BM(\R) g) \leq \CeffavcorH\minht(xH)^\Adiocond T^\Adiocond\eta^{-\Adiocond}.
\end{align*}
\end{enumerate}
\end{corollary}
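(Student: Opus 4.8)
The plan is to apply Theorem~\ref{thm:effectiveavoidance} to the representative $g$ of $xH = \Gamma g H$ that has controlled size, to integrate the pointwise dichotomy over the periodic orbit, and to translate the three conclusions of the theorem into the two alternatives of the corollary. First I would fix a good representative: by the height estimate \eqref{eq:diameter} we may choose $g \in G$ with $\abs{g} \leq \Cdiam \height(x)^{\Adiam}$, and since $\minht(xH)$ is the infimum of the height over $xH$, after replacing $g$ by $gh$ for a suitable $h \in H$ (which changes neither the orbit $\Gamma g H$ nor the orbits $\Gamma \BM(\R)g$ in a way that affects discriminants beyond a bounded factor, because $U \subset H$ and the relevant quantities transform by $\wedgeorbit{\BM}(gh)$ with $h$ ranging in a set we will integrate over) we get $\abs{g} \ll \minht(xH)^{\Adiam}$ with an implicit constant depending on $\dimemb$, $\height(\G)$ and $\Gamma$. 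I would then set $\Adiocond$ large enough (in terms of $\AeffavT$ and $\Adiam$) and $\CeffavcorH$ large enough (absorbing $\Cdiam$, $\Ceffav$, $\Ceffavgeom$, and constants coming from $U$) so that the hypothesis \eqref{eq:dioepsest Hcase} implies \eqref{eq:dioepsassum} for this $g$ and any $\eta$ in the allowed range.

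Next I would use the fact that the one-parameter unipotent group $U \subset H$ acts ergodically on $xH$ and that $\mu = \mu_{xH}$ is $U$-invariant: the $\mu$-measure of the bad set $B = \{y : y \text{ not } (\dioeps,T)\text{-Diophantine or } y \notin X_\eta\}$ equals the asymptotic fraction of time that the $U$-orbit of a generic point $y = \Gamma g$ spends in $B$. Concretely, by Birkhoff (or simply by Fubini against the $U$-invariant $\mu$ and unfolding the $H$-orbit as $\Gamma g H$ with $U \subset H$), $\mu(B) = \lim_{t_2 \to \infty} \frac{1}{t_2} |\{t \in [0,t_2] : \Gamma g u_t \in B\}|$ for $\mu$-a.e.\ representative, and in fact one can work with a fixed suitable $g$. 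Applying Theorem~\ref{thm:effectiveavoidance} with $t_1 = 0$, $t_2 \to \infty$, and this $g$: if option~\eqref{item:effav1} holds for all large $t_2$, then the time-fraction bound $\Ceffavgeom \eta^{\kappaeffav}$ passes to the limit and gives $\mu(B) \leq \Ceffavgeom \eta^{\kappaeffav}$, which is alternative~(1).

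If instead option~\eqref{item:effav2} holds for some arbitrarily large $t_2$ (or for a cofinal sequence), then for each such $t_2$ we obtain a nontrivial proper $\BM \in \classH$ with $\norm{\ugen \wedge \wedgeorbit{\BM}(gu_t)} \leq |t_2|^{-\kappaeffav}(\ldots)$ for all $t \in [0,t_2]$; letting $t_2 \to \infty$ and using that there are only finitely many $\BM$ of bounded height (the bound $\norm{\wedgeorbit{\BM}(gu_t)} \leq (\Ceffav\abs{g}^{\AeffavT} + \Ceffavgeom T^{\AeffavT})\eta^{-\AeffavT}$ forces $\norm{\wedgevec{\BM}} = \height(\BM)$ to be bounded) we extract a single $\BM$ with $\norm{\ugen \wedge \wedgeorbit{\BM}(g)} = 0$, i.e.\ $U \subset g^{-1}\BM(\R) g$, together with $\disc(\Gamma\BM(\R)g) = \norm{\wedgeorbit{\BM}(g)} \leq \CeffavcorH \minht(xH)^{\Adiocond} T^{\Adiocond} \eta^{-\Adiocond}$ after inserting $\abs{g} \ll \minht(xH)^{\Adiam}$. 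Since $U$ is not contained in any proper normal $\Q$-subgroup of $\G$, $\BM$ is automatically proper and of class $\classH$; and because $U \subset H$ is Zariski-dense-enough in $H$ (the homomorphism $\SL_2 \to H$ projects nontrivially to each simple factor, so the Zariski closure of $U$ generates $H$ up to the center, and $xH$ periodic forces $gHg^{-1}$ to be algebraic), one checks that $U \subset g^{-1}\BM(\R)g$ together with $H$ semisimple and periodicity implies $gHg^{-1} \subset \BM(\R)$, hence $\Gamma\BM(\R)g \supset \Gamma g H = xH$. That gives alternative~(2). Finally, option~\eqref{item:effav3} of Theorem~\ref{thm:effectiveavoidance} is excluded outright by our standing assumption that no proper normal $\Q$-subgroup of $\G$ contains $U$ (equivalently $\ugen$), provided $\dioeps$ is small enough on the finite set of values $\height(\BM)^{1/\AeffavT}\eta/\Ceffavgeom$ — which again is guaranteed by \eqref{eq:dioepsest Hcase} after enlarging $\CeffavcorH$.

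\textbf{Main obstacle.} The delicate point is the last step: upgrading "$U$ stays, for all $t$, within a shrinking neighborhood of a periodic $\BM$-orbit" (option~\eqref{item:effav2} with $t_2 \to \infty$) to the clean algebraic statement "$\Gamma\BM(\R)g \supset xH$". This requires (a) a compactness/finiteness argument to pin down a single $\BM$ independent of $t_2$, using that the height of $\BM$ is bounded, and (b) deducing from $U \subset g^{-1}\BM(\R)g$ and the structure of $H$ that all of $gHg^{-1}$ lies in $\BM(\R)$, for which one invokes that $U$ generates $H$ as an algebraic group (via the chosen $\SL_2 \to H$) modulo the fact that $xH$ periodic makes $gHg^{-1}$ defined over $\Q$. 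Handling the base-point change (from an arbitrary representative to one realizing $\minht(xH)$ up to bounded error, while tracking its effect on $\abs{g}$ and on the discriminants through $h \in H$) is the other bookkeeping nuisance, but it is routine given \eqref{eq:diameter} and the remark that $\height(xg)\ll \abs{g}\height(x)$.
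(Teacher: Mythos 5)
Your overall strategy matches the paper's: choose a representative $g$ of small height via \eqref{eq:diameter}, use $U$-ergodicity and Birkhoff to turn the time-fraction bound of Theorem~\ref{thm:effectiveavoidance}\eqref{item:effav1} into the measure bound of option~(1), and rule out option~\eqref{item:effav3} from the standing assumption that $U$ is not in any proper normal $\Q$-subgroup. The finiteness-of-bounded-height argument to extract a single $\BM$ along a cofinal sequence of $t_2$ is also the paper's. Your bookkeeping remarks about the representative and about the effect of $g \mapsto gh$ are valid concerns and can be discharged as you indicate.

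The one genuine gap is in your step~(b) of the self-identified obstacle. You try to upgrade $U \subset g^{-1}\BM(\R)g$ to $gHg^{-1}\subset\BM(\R)$ by claiming ``the Zariski closure of $U$ generates $H$ up to the center.'' This is not correct as stated: $U$ is a one-parameter unipotent subgroup, hence already Zariski closed, and its Zariski closure is just $U$. What is true is that the \emph{normal} subgroup of $H$ generated by $U$ is all of $H$ (since $U$ projects nontrivially to each simple factor). But to use that, you would need all $H$-conjugates of $U$ to already lie in $g^{-1}\BM(\R)g$, which presupposes the containment $H\subset g^{-1}\BM(\R)g$ you are trying to prove. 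The paper's argument at this point is dynamical, not algebraic: from $U\subset g^{-1}\BM(\R)g$ one gets $xU\subset\Gamma\BM(\R)g$; the orbit $\Gamma\BM(\R)g$ is closed (Borel--Harish-Chandra, since $\BM\in\classH$); and $\overline{xU}=xH$ because $U$ acts ergodically on $xH$. Taking closures gives $xH\subset\Gamma\BM(\R)g$, hence also $H\subset g^{-1}\BM(\R)g$. This is where the ergodicity of $U$ on $xH$ is used a second time (beyond the Birkhoff averaging step for option~(1)). Once you replace the Zariski-density claim with this closure argument, the rest of your proposal goes through.
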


\begin{proof}
We assume that $\dioeps(\cdot)$ satisfies \eqref{eq:dioepsest Hcase} for some sufficiently large $\CeffavcorH\geq \Ceffavgeom$ to be determined in the proof.
We wish to use \cite[Thm.~3.2]{effectiveavoidance} for a generic point on the $H$-orbit $xH$; we may assume without loss of generality that $x = \Gamma g$ itself is Birkhoff generic with respect to $U$.
Similarly, we may suppose that $|g| \leq 2 \Cdiam \minht(xH)^{\Adiam}$ by \eqref{eq:diameter}.

Consider Theorem~\ref{thm:effectiveavoidance} for intervals $[0,t']$, $t' \in \N$, and the point $x$. Then one of the three options \ref{item:effav1}--\ref{item:effav3} occurs infinitely often and by changing to a subsequence $t_\ell$ of the integers we assume that it always occurs.
If \ref{item:effav1} holds for all intervals $[0,t_\ell]$, we may pass to the limit 
as $\ell \to \infty$ to obtain $(1)$ (recall that $x$ is Birkhoff generic).

Suppose now that for any $\ell$ there exists a nontrivial nonnormal subgroup $\mathbf{M}_\ell \in \classH$ as in \ref{item:effav2}.
As the height is bounded independently of $\ell$ and for any $m>0$ there are finitely many $\Q$-groups of height at most $m$, we may pass to a subsequence and assume $\BM = \BM_\ell$ for all $\ell$.
We have for all $t \in [0,t_\ell]$
\begin{align}
\norm{\wedgeorbit{\BM}(gu_t)} &\leq (\Ceffav \abs{g}^{\AeffavT}+ \Ceffavgeom T^{\AeffavT}) \eta^{-\AeffavT}\label{eq:rulingoutnonnormal1}\\
\norm{\ugen \wedge \wedgeorbit{\BM}(gu_t)}
&\leq t_\ell^{-\kappaeffav} (\Ceffav \abs{g}^{\AeffavT}+ \Ceffavgeom T^{\AeffavT}) \eta^{-\AeffavT}\label{eq:rulingoutnonnormal2}
\end{align}
for all $t \in [0,t_\ell]$.
Letting $\ell \to \infty$ we obtain $\norm{\ugen\wedge \eta_{\BM}(g)}=0$ from \eqref{eq:rulingoutnonnormal2} or equivalently $U \subset g^{-1}\BM(\R) g$.
This shows that $xU \subset \Gamma \BM(\R) g$ and hence $xH  \subset \Gamma \BM(\R) g$ by taking the closure.
In particular, $H \subset g^{-1}\BM(\R) g$ and the second case of the proposition holds with the group $\BM$ by \eqref{eq:rulingoutnonnormal1}.

Suppose Option \ref{item:effav3} in Theorem~\ref{thm:effectiveavoidance} holds. In particular, there is a proper normal subgroup $\mathbf{M} = \mathbf{M}_\ell$ with
\begin{align}\label{eq:rulingoutnormal}
\norm{\ugen\wedge \wedgevec{\BM}} \leq \dioeps\big(\height(\mathbf{M})^{\frac{1}{\AeffavT}}\eta/\Ceffavgeom\big)^{\frac{1}{\AeffavT}}.
\end{align}
Notice that $\ugen\wedge \wedgevec{\BM'} \neq 0$ for any proper normal subgroup $\BM' \lhd \G$.
Indeed, if  $\ugen\wedge \wedgevec{\BM'} = 0$ then $U$ and all its conjugates by elements of $H$ are contained in $\BM'(\R)$. But the conjugates of $U$ generate $H$ and so $H \subset \BM'(\R)$ contradicting our assumption in \S\ref{sec:basicsetup}.
Thus, choosing $\CeffavcorH$ large enough with $C_4^{-1}\ll \min_{\G \neq \BM'\lhd \G}\norm{\ugen\wedge \wedgevec{\BM}}^\star$ we contradict \eqref{eq:rulingoutnormal} and hence conclude.
\end{proof}

In the remainder of this article, we will consider functions $\dioeps(\cdot)=\dioeps_\eta(\cdot)$ of the form
\begin{align}\label{eq:dioepschoice}
\dioeps_\eta(s) =  \tfrac{1}{\CeffavcorH} s^{-\AeffavcorH} \eta^{\AeffavcorH}
\end{align}
for $\eta \in (0,\frac{1}{2})$.
For later convenience, we also define
\begin{align}\label{eq:ethdef}
\minobs(xH) = \minobs_{\classH}(xH) = \inf\{\disc(\Gamma \BM(\R) g): \G \neq \mathbf{M} \in \classH \text{ with } xH \subset \Gamma \BM(\R) g\}
\end{align}
to abbreviate the second case in the above corollary.
Thus, if
\begin{align*}
T < \CeffavcorH^{-\frac{1}{\Adiocond}}\frac{\minobs(xH)^{\frac{1}{\Adiocond}}\eta}{\minht(xH)}
\end{align*}
the existence of many $(\psi_\eta,T)$-Diophantine points on the $H$-orbit $xH$ is guaranteed by Corollary~\ref{cor:dio on H-orbit}.

\subsection{Effective closing lemma}

This paper relies heavily on the following effective closing lemma due to Lindenstrauss, Margulis, Mohammadi, Shah, and the author from \cite{EffClosing}.
For a subalgebra $\Fs < \Fg$ we write $\hat{v}_{\Fs}$ for the point $\Fs$ defines on the projective space $\mathbb{P}^{\dim(\Fs)}(\Fg)$.
We equip $\mathbb{P}^{\dim(\Fs)}(\Fg)$ with the metric $\metric(\cdot,\cdot)$ (the Fubini-Study metric) given by $\metric(\hat{v},\hat{w}) = \min\{\norm{v+w},\norm{v-w}\}$ for unit vectors $v \in \hat{v},w\in \hat{w}$.

\begin{theorem}\label{thm:effectiveclosing}
There exist constants $\Aeffclos,\Aeffclosres>1$ depending only on $N$, and $E>0$ depending on $N,G,\Gamma$ with the following property.
Let $\Fs < \Fg$ be a non-normal subalgebra. 
Assume $\tau>0$, $T>R>E\tau^{-\Aeffclos}$.
Let $x = \Gamma g \in X_{\tau}$ be a point.

Suppose that there exists a measurable subset $\mathcal{E} \subset [-T,T]$ with the following properties:
\begin{enumerate}[(a)]
\item $|\mathcal{E}|>TR^{-1/\Aeffclos}$.
\item For any $s,t\in \mathcal{E}$ there exists $\gamma_{st} \in \Gamma$ with 
\begin{gather*}
\norm{u_{-s}g^{-1}\gamma_{st}gu_t}\leq R^{1/\Aeffclos}\\
\metric(u_{-s}g^{-1}\gamma_{st}gu_t. \hat{v}_{\Fs}, \hat{v}_{\Fs})\leq R^{-1}.
\end{gather*}
\end{enumerate}
Then one of the following is true:
\begin{enumerate}
\item There exist a nontrivial proper subgroup $\BM\in\classH$ 
so that the following hold for all $t \in [-T,T]$:
\begin{align*}
\norm{\eta_{\BM}(gu_t)}&\leq R^{\Aeffclosres}\\
\norm{\ugen\wedge{\eta_{\BM}(gu_t)}}&\leq T^{-1/\Aeffclosres}R^{\Aeffclosres}
\end{align*}
\item There exist a nontrivial proper normal subgroup $\BM\in\classH$ with
\begin{align*}
\norm{\ugen\wedge \wedgevec{\BM}}&\leq R^{-1/\Aeffclosres}.
\end{align*}
\end{enumerate}    
\end{theorem}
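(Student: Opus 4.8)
The plan is to treat this as a quantitative form of the linearization principle for unipotent orbits. Hypothesis (b) encodes that along a set $\mathcal{E}$ of times of definite proportion in $[-T,T]$ the points $\Gamma g u_t$ all cluster — up to $\Gamma$ and up to the alignment recorded by $\hat v_{\Fs}$ — into a single small box, and the assertion is that such recurrence must be forced by a genuine proper $\Q$-subgroup $\BM\in\classH$ whose orbit $\Gamma\BM(\R)g$ has discriminant bounded by a power of $R$. I would run the argument in three stages: \textbf{(i)} convert (a)+(b) into the statement that a bounded-degree polynomial curve in an exterior power of the adjoint representation stays $R^{-\star}$-close to a fixed line on a subset of $[-T,T]$ of measure $\gg TR^{-\star}$; \textbf{(ii)} upgrade this to closeness on \emph{all} of $[-T,T]$ via the polynomial non-divergence ($(C,\alpha)$-good) estimates of Dani--Margulis; \textbf{(iii)} carry out an arithmeticity step that produces from the resulting nearly-$U$-invariant line an honest $\Q$-subgroup, landing in conclusion (1), or, in the degenerate case where only a normal subgroup can be produced, in conclusion (2).

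For stage (i): since $\mathrm{diam}(\mathcal{E})\ge|\mathcal{E}|\ge TR^{-1/\Aeffclos}$, I would fix one $s_\ast\in\mathcal{E}$ and set $\gamma_t:=\gamma_{s_\ast t}$ for $t\in\mathcal{E}$. Writing $h_t:=u_{-s_\ast}g^{-1}\gamma_t g u_t$, condition (b) says $\norm{h_t}\le R^{1/\Aeffclos}$ and $h_t$ moves $\hat v_{\Fs}$ by at most $R^{-1}$, so that $\wedgerep{}(u_t).\hat v_{\Fs}$ lies within $R^{-\star}$ of the fixed line $\wedgerep{}(g^{-1}\gamma_t g)^{-1}\wedgerep{}(u_{s_\ast}).\hat v_{\Fs}$, where $\wedgerep{}$ denotes the exterior power of the adjoint representation on $\bigwedge^{\dim\Fs}\Fg$. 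The elements $g^{-1}\gamma_t\gamma_{t'}^{-1}g$ have norm $\ll R^{\star}$, and since $x\in X_{\tau}$ has injectivity radius $\gg\tau^{\kappainjradius}$ while $T>R>E\tau^{-\Aeffclos}$, a lattice-point count in $g^{-1}\Gamma g$ bounds the number of distinct values of $t\mapsto\gamma_t$ on $\mathcal{E}$ by $R^{\star}$. Pigeonholing produces $\mathcal{E}'\subset\mathcal{E}$ with $|\mathcal{E}'|\gg TR^{-\star}$ on which $\gamma_t$ is constant; then the curve $c(t):=\wedgerep{}(u_t).\hat v_{\Fs}$ — polynomial in $t$ of degree $\ll\dim(\G)^{\star}$ — stays $R^{-\star}$-close to one fixed line $\hat w$ for every $t\in\mathcal{E}'$.

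For stage (ii): the squared Plücker distance $t\mapsto\metric(c(t),\hat w)^2$ is, in homogeneous coordinates, a ratio of bounded-degree polynomials, hence $(C,\alpha)$-good on $[-T,T]$; since it is $\le R^{-\star}$ on a set of measure $\ge TR^{-\star}$ and is a priori bounded, the $(C,\alpha)$-good inequality forces $\sup_{|t|\le T}\metric(c(t),\hat w)\le R^{-\star}$. In particular $\hat w$ is $R^{-\star}$-close to $\hat v_{\Fs}$ and is nearly fixed by the whole unipotent orbit: $\wedgerep{}(u_t).\hat w$ stays $R^{-\star}$-close to $\hat w$ for all $|t|\le T$. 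For stage (iii), I would now feed this back into the arithmetic: a line that is equivalent under $g^{-1}\Gamma g$ to a line near $\hat v_{\Fs}$ and is nearly fixed by a long unipotent orbit cannot be in general position, because the reduction theory of $\G$ — separation of proper $\Q$-subgroups of bounded height in projective space, together with a gap estimate for $\Gamma$-orbits of integral wedge-vectors — produces a genuine proper $\Q$-subgroup $\BM$ with $\wedgeorbit{\BM}(g)$ projectively $R^{-\star}$-close to $\hat w$ and $\norm{\wedgeorbit{\BM}(g)}\le R^{\star}$. Passing, if necessary, to the smallest $\Q$-subgroup of class $\classH$ through which this occurs (using that $U$ is unipotent), one may take $\BM\in\classH$; near-invariance of $\wedgeorbit{\BM}(gu_t)$ along $|t|\le T$ then yields the two displayed bounds of conclusion~(1) after tracking constants. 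If the only $\Q$-structure explaining $\hat w$ is normal — i.e.\ $U$-invariant from the outset — one instead lands in conclusion~(2) with $\norm{\ugen\wedge\wedgevec{\BM}}\le R^{-1/\Aeffclosres}$.

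The main obstacle is stage (iii): converting the analytically-produced approximate $U$-invariant line into an honest $\Q$-subgroup with effectively controlled height, with exponents depending only on $N$ rather than on $\tau$ or $\Gamma$. This needs a genuinely Diophantine input — separation of rational subgroups and the absence of spuriously small vectors in $\Gamma$-orbits — and one must simultaneously ensure that the unipotent orbit does not wander into the cusp during $[-T,T]$ (handled by non-divergence together with the hypothesis $x\in X_{\tau}$, at the cost of the constraint $R>E\tau^{-\Aeffclos}$). Stages (i)--(ii) are comparatively routine, the only real work being to keep every polynomial loss explicit.
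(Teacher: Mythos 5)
First, a structural point: Theorem~\ref{thm:effectiveclosing} is not proved in this paper at all. It is quoted verbatim from the companion work \cite{EffClosing} (Lindenstrauss--Margulis--Mohammadi--Shah--Wieser), and the only thing this paper does with it is derive Corollary~\ref{cor:effective closing}. So there is no internal proof for your argument to be compared against; I can only assess your sketch on its own merits.

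On that assessment: stage (i) of your proposal has a genuine gap, and it is fatal to the whole strategy. You fix $s_\ast\in\mathcal{E}$, set $\gamma_t:=\gamma_{s_\ast t}$ and $h_t:=u_{-s_\ast}g^{-1}\gamma_t g u_t$, and then claim that $g^{-1}\gamma_t\gamma_{t'}^{-1}g$ has norm $\ll R^\star$, from which a lattice-point count plus pigeonhole should produce a subset $\mathcal{E}'\subset\mathcal{E}$ of measure $\gg TR^{-\star}$ on which $\gamma_t$ is \emph{constant}. Neither half of this works. For the norm, one has
\begin{align*}
g^{-1}\gamma_t\gamma_{t'}^{-1}g = u_{s_\ast}\,h_t\,u_{t'-t}\,h_{t'}^{-1}\,u_{-s_\ast},
\end{align*}
so $\norm{g^{-1}\gamma_t\gamma_{t'}^{-1}g}\ll \abs{u_{s_\ast}}^2\,R^{2/\Aeffclos}\,\abs{u_{t'-t}} \ll T^{\star}R^{\star}$, since $s_\ast, t, t'$ range over $[-T,T]$ and $U$ is unipotent. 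The hypotheses only give $T>R$, with no upper bound on $T$ in terms of $R$, so this is \emph{not} $\ll R^\star$. Worse, pigeonholing to a constant $\gamma_t$ is impossible in principle: if $\gamma_t=\gamma_{t'}$ then $h_t u_{t'-t} = h_{t'}$ forces $\norm{u_{t'-t}}\ll R^\star$, hence $|t-t'|\ll R^\star$. Thus each level set of $t\mapsto\gamma_t$ on $\mathcal{E}$ has measure $\ll R^\star$, while $|\mathcal{E}|\geq TR^{-1/\Aeffclos}$ with $T$ allowed to be arbitrarily larger than $R^\star$. There is therefore no subset of measure $\gg TR^{-\star}$ where $\gamma_t$ is constant.

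This is not a loose constant but a conceptual misreading of what an effective closing lemma is. The whole difficulty is precisely that the returning lattice elements $\gamma_{st}$ \emph{cannot} be reduced to a single element: as $t$ varies they form a long chain $\gamma_{t_0 t_1},\gamma_{t_1 t_2},\dots$ of distinct elements, and the substance of the statement is that the group generated by these elements (or its Zariski closure, after conjugation by $g$) is forced by the alignment condition on $\hat{v}_{\Fs}$ to be a proper $\Q$-subgroup of controlled height. Once you collapse to a single $\gamma$, all you are analyzing is polynomial divergence around one fixed conjugate, which does not produce a $\Q$-subgroup at all. Your stage (iii), which is the arithmeticity step that would produce $\BM$, is therefore resting on a stage (i) that never starts; and the $(C,\alpha)$-good extrapolation in stage (ii) is being applied to the wrong object. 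The correct argument (in \cite{EffClosing}) instead extracts from the family $\{\gamma_{st}\}$ a finitely generated subgroup of $\Gamma$ whose Zariski closure is proper and effectively bounded, then passes to its maximal normal unipotent-radical quotient to land in class $\classH$; the alternative (2) of the theorem is exactly the case where this Zariski closure degenerates to a normal subgroup of $\G$.
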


We will use the effective closing lemma only in the weaker form of the following corollary.

\begin{corollary}\label{cor:effective closing}
There exists $\Aeffcloscor>0$ depending only on $\dimemb$ and $\Ceffcloscor>0$ depending on $N,G,\Gamma$ with the following property.
Let $\Fs< \Fg$ be a subalgebra.
Let $\eta\in (0,1/2)$ and $T \geq \Ceffcloscor\eta^{-\Aeffcloscor}$. 
Let $x \in X_{\eta}$ be $(\psi_\eta,T)$-Diophantine.
Suppose that $\mathcal{E}\subset [0,T]$ is a measurable subset satisfying the following:
\begin{enumerate}[(a)]
\item $|\mathcal{E}| > T^{1-1/\Aeffcloscor}$
\item For any $s,t \in \mathcal{E}$ there exist $g_{st} \in G$ with $|g_{st}| \leq 2$,
\begin{align}\label{eq:closing-displ}
xu_{s} = xu_{t}g_{st},
\end{align}
and
\begin{align}\label{eq:closing-bounddispl}
\metric(g_{st}.\hat{v}_{\Fs}, \hat{v}_{\Fs}) \leq T^{-1}.
\end{align}
\end{enumerate}
Then $\Fs$ is a Lie ideal.
\end{corollary}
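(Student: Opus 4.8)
The plan is to dispose of the case where $\Fs$ is already a Lie ideal and, otherwise, to feed the data of the corollary into the effective closing lemma (Theorem~\ref{thm:effectiveclosing}) and contradict both of its conclusions --- using the $(\psi_\eta,T)$-Diophantine hypothesis on $x$ together with the observation (already made in the proof of Corollary~\ref{cor:dio on H-orbit}) that $\ugen\wedge\wedgevec{\BM}\neq 0$ for every proper normal $\Q$-subgroup $\BM\lhd\G$. So assume from now on that $\Fs$ is not a Lie ideal, fix a representative $x=\Gamma g$ (Diophantine-ness and membership in $X_\eta$ being properties of the point), and set $\tau:=\eta$.

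I would first fix the constants: put $\Aeffcloscor:=2\Aeffclos\Aeffclosres^2(1+\AeffavcorH)$, which depends only on $\dimemb$ since $\Aeffclos,\Aeffclosres,\AeffavcorH$ do, and set $R:=T^{\Aeffclos/\Aeffcloscor}$; then $\Aeffclos\Aeffclosres/\Aeffcloscor<1$, so $R<T$ and $R^{\Aeffclosres}<T$. With $\Ceffcloscor$ chosen large (in terms of $\dimemb$, $G$, $\Gamma$ and the fixed $U$), the hypothesis $T\geq\Ceffcloscor\eta^{-\Aeffcloscor}$ yields $R\geq\Ceffcloscor^{\Aeffclos/\Aeffcloscor}\eta^{-\Aeffclos}>E\eta^{-\Aeffclos}$, and hypothesis (a) of Theorem~\ref{thm:effectiveclosing} is exactly the bound $|\mathcal{E}|>T^{1-1/\Aeffcloscor}=TR^{-1/\Aeffclos}$. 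For hypothesis (b), given $s,t\in\mathcal{E}$ I would apply the corollary's hypothesis (b) to the pair $(t,s)$, obtaining $g_{ts}\in G$ with $\abs{g_{ts}}\leq 2$, $xu_t=xu_sg_{ts}$, and $\metric(g_{ts}.\hat{v}_{\Fs},\hat{v}_{\Fs})\leq T^{-1}$. The relation $xu_t=xu_sg_{ts}$ means $gu_t=\gamma gu_sg_{ts}$ for some $\gamma\in\Gamma$, hence $u_{-s}g^{-1}\gamma^{-1}gu_t=g_{ts}$; taking $\gamma_{st}:=\gamma^{-1}\in\Gamma$, the bound $\abs{g_{ts}}\leq 2$ forces $\norm{g_{ts}}\ll 1$ (if $\norm{g_{ts}^{-1}}_\infty\leq 2$ the entries of $g_{ts}$ are $(\dimemb-1)\times(\dimemb-1)$ minors of $g_{ts}^{-1}$, hence bounded in terms of $\dimemb$), which is $\leq R^{1/\Aeffclos}$ once $\Ceffcloscor$ is large, while $\metric(g_{ts}.\hat{v}_{\Fs},\hat{v}_{\Fs})\leq T^{-1}\leq R^{-1}$. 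Hence Theorem~\ref{thm:effectiveclosing} applies.

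Next I would contradict each of its two conclusions. If the first holds, there is a nontrivial proper $\BM\in\classH$ with $\norm{\wedgeorbit{\BM}(g)}\leq R^{\Aeffclosres}$ and $\norm{\ugen\wedge\wedgeorbit{\BM}(g)}\leq T^{-1/\Aeffclosres}R^{\Aeffclosres}$, taking $t=0$ in the conclusion. Since $\norm{\wedgeorbit{\BM}(g)}\leq R^{\Aeffclosres}<T$, the $(\psi_\eta,T)$-Diophantine property of $x$, together with $\psi_\eta$ being decreasing and of the form \eqref{eq:dioepschoice}, gives $\norm{\ugen\wedge\wedgeorbit{\BM}(g)}\geq\psi_\eta(\norm{\wedgeorbit{\BM}(g)})\geq\psi_\eta(R^{\Aeffclosres})=\tfrac{1}{\CeffavcorH}R^{-\Aeffclosres\AeffavcorH}\eta^{\AeffavcorH}$. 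Combining with the upper bound, substituting $R=T^{\Aeffclos/\Aeffcloscor}$ and rearranging, the choice $\Aeffcloscor=2\Aeffclos\Aeffclosres^2(1+\AeffavcorH)$ is made precisely so that this reduces to $T^{1/(2\Aeffclosres)}\leq\CeffavcorH\,\eta^{-\AeffavcorH}$; as $\Aeffcloscor/(2\Aeffclosres)=\Aeffclos\Aeffclosres(1+\AeffavcorH)>\AeffavcorH$ and $\eta<1$, this contradicts $T\geq\Ceffcloscor\eta^{-\Aeffcloscor}$ once $\Ceffcloscor>\CeffavcorH^{2\Aeffclosres}$. If instead the second conclusion holds, there is a nontrivial proper normal $\BM\in\classH$, in particular a $\Q$-subgroup, with $\norm{\ugen\wedge\wedgevec{\BM}}\leq R^{-1/\Aeffclosres}$; but there are only finitely many proper normal $\Q$-subgroups of $\G$ and $\ugen\wedge\wedgevec{\BM}\neq 0$ for each (the $H$-conjugates of $U$ generate $H$, which by \S\ref{sec:basicsetup} lies in no proper normal $\Q$-subgroup), so $\norm{\ugen\wedge\wedgevec{\BM}}\geq c_0$ for a constant $c_0>0$ depending on $\G$ and $U$, and since $R\geq\Ceffcloscor^{\Aeffclos/\Aeffcloscor}$ this fails for $\Ceffcloscor$ large. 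Both conclusions of Theorem~\ref{thm:effectiveclosing} being impossible, $\Fs$ must be a Lie ideal.

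The main obstacle is the bookkeeping of constants: one has to pick the single pair $(\Aeffcloscor,\Ceffcloscor)$ so that everything above holds simultaneously --- the standing hypotheses $T>R>E\eta^{-\Aeffclos}$ and (a) of Theorem~\ref{thm:effectiveclosing}, the estimate $R^{1/\Aeffclos}\gg 1$, the inclusion $R^{\Aeffclosres}<T$ that triggers the Diophantine bound, and the two final contradictions. The one genuinely forced relation is $\Aeffcloscor=2\Aeffclos\Aeffclosres^2(1+\AeffavcorH)$, which turns the Diophantine inequality into $T^{1/(2\Aeffclosres)}\leq\CeffavcorH\eta^{-\AeffavcorH}$; every remaining requirement is met by taking $\Aeffcloscor$ at least this large and $\Ceffcloscor$ larger than a finite list of constants assembled from $E$, $\CeffavcorH$, $c_0$, and the $\dimemb$-dependent constant implicit in $R^{1/\Aeffclos}\gg 1$.
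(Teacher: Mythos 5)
Your proof is correct and follows essentially the same route as the paper: apply Theorem~\ref{thm:effectiveclosing} with $\tau=\eta$ and $R=T^\theta$ (you take $\theta=\Aeffclos/\Aeffcloscor$), verify conditions (a) and (b) by rewriting the returning-orbit relation in terms of a lattice element, and rule out both conclusions using the $(\psi_\eta,T)$-Diophantine hypothesis (case 1) and the lower bound $\norm{\ugen\wedge\wedgevec{\BM}}\geq c_0>0$ over the finitely many proper normal $\Q$-subgroups (case 2). The only difference from the paper's proof is cosmetic: the paper leaves $\theta$ as ``sufficiently small'' and uses $\star$-notation throughout, whereas you solve for an explicit $\Aeffcloscor=2\Aeffclos\Aeffclosres^2(1+\AeffavcorH)$ and track the resulting exponent relations; you also spell out the short step from $\abs{g_{ts}}\leq 2$ (which is a $\min$ of two $\norm{\cdot}_\infty$ quantities) to a bound on $\norm{g_{ts}}$ via cofactors, which the paper leaves implicit.
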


\begin{proof}[Proof of Corollary~\ref{cor:effective closing}]
We apply Theorem~\ref{thm:effectiveclosing} with $\tau = \eta$ and $R = T^\theta$ for some small $\theta$ to be determined. In view of our assumptions, we may assume $R$ is bigger than any given fixed constant below.
We assert that (a) in Theorem~\ref{thm:effectiveclosing} is satisfied by choosing $\Aeffcloscor> \Aeffclos/ \theta$.
Moreover, writing $x = \Gamma g$ and using the assumptions there is for any $s,t \in \mathcal{E}$ a lattice element $\gamma_{st} \in \Gamma$ with $\gamma_{st}gu_s = gu_t g_{st}$.
In particular, \eqref{eq:closing-displ} and \eqref{eq:closing-bounddispl} yield (b) in Theorem~\ref{thm:effectiveclosing}.
By contradiction, assume that $\Fs$ is not an ideal.

Suppose conclusion (1) in Theorem~\ref{thm:effectiveclosing} holds for some $\BM \in \mathcal{H}$.
In particular,
\begin{align*}
T^{-\star}R^{\star} \geq \norm{\ugen \wedge \eta_{\BM}(g)}
\geq \psi_\eta(\norm{\eta_{\BM}(g)}) \gg \eta^\star R^{-\star}
\end{align*}
and so $T \ll \eta^{\star}R^{\star}$ which is a contradiction in view of our assumptions and for $\theta$ sufficiently small.

Suppose conclusion (2) in Theorem~\ref{thm:effectiveclosing} holds for some $\BM \lhd \G$.
Similarly to the previous case
\begin{align*}
R^{-\star} \gg \norm{\ugen \wedge \wedgevec{\BM}} \geq \psi_\eta(\height(\BM)) \gg \eta^\star \big(\max_{\BM \lhd \G} \height(\BM) \big)^{-\star} \gg \eta^{\star}
\end{align*}
where we used that $\G$ has finitely many normal subgroups (see Lemma~\ref{lem:normalsubgroups} for a more explicit estimate).
This proves the corollary.
\end{proof}

%
%

\section{Additional almost invariance}\label{sec:addinv}

In the following, we use polynomial divergence of $U$-orbits to show that transversal generic points yield additional almost invariance (see \S\ref{sec:intro-addinv} for an introductory discussion).
The following proposition is a refinement of \cite[Prop.~10.1]{emv} to include almost centralized displacements.

\begin{proposition}\label{prop:add inv}
Let $k_0,d \geq 1$ and $\varepsilon>0$.
Let $\mu$ be the Haar probability measure on a periodic $H$-orbit and assume that $\mu$ is $\varepsilon$-almost invariant under an intermediate Lie algebra $\Fh \subset \Fs \subset \Fg$ (with respect to $\Sob_d$).
Let $\Fr \subset \Fg$ be an undistorted $H$-invariant complement to $\Fs$.
Suppose that there exist two points $x_1,x_2\in X$ satisfying that
\begin{itemize}
\item $x_2 = x_1 \exp(r)$ for some $r \in \Fr$ with
\begin{align}\label{eq:addinv-displcond}
\varepsilon^{1/4} \leq \norm{r} \leq 1,
\end{align}
\item and $x_1,x_2$ are $[k_0,\varepsilon^{-1}]$-generic (with respect to $\Sob_d$).
\end{itemize}
Then 
$\mu$ is $\ll_d k_0^\star \norm{r}^{\star}$-almost invariant (with respect to $\Sob_d$) under some $Z \in \Fr$ with $\norm{Z} = 1$.
\end{proposition}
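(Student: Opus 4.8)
The plan is to run the standard polynomial-divergence dichotomy from \cite[\S10]{emv}, being careful to track the fact that the displacement $r$ lives in an $H$-invariant complement $\Fr$ which is \emph{not} $\Fs$-invariant (so we cannot decompose $\Ad_{u_{-t}}(r)$ into pieces adapted to $\Fs$ and $\Fr$ separately). First I would set up the divergence: for $x_2 = x_1\exp(r)$ we have $x_2 u_t = x_1 u_t \exp(\Ad_{u_{-t}}(r))$, and the map $p(t) := \Ad_{u_{-t}}(r)$ is a polynomial in $t$ of degree bounded in terms of $N$ with $p(0)=r$. Let $t_\ast \in [0,\varepsilon^{-1/\star}]$ be (roughly) the first time at which $\norm{p(t)-r}$ reaches a fixed small threshold $\rho_0$ (if it never does on this interval, we are in the ``slow'' case). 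The point is that on $[0,t_\ast]$ the two orbit pieces $x_1 u_t$ and $x_2 u_t$ stay within a controlled small distance, so I can compare their Birkhoff averages.

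The core estimate is as follows. Pick $n$ with $n^{\tempered}\approx t_\ast$ (in the fast case) or $n$ near the top of a polynomially shorter window (in the slow case), apply the $[k_0,\varepsilon^{-1}]$-genericity of $x_1$ and of $x_2$ to a test function $f\in\compactsmooth(X)$, and subtract:
\begin{align*}
\Big|\int f\de\mu^{g}-\int f\de\mu\Big|
\ll \tfrac{1}{n}\Sob_d(f) + \frac{1}{(n+1)^{\tempered}-n^{\tempered}}\int_{n^{\tempered}}^{(n+1)^{\tempered}} \big|f(x_1 u_t g) - f(x_2 u_t)\big|\de t,
\end{align*}
where $g\in G$ is (up to a bounded correction) the ``parallel'' displacement between the two orbits near time $t_\ast$, i.e.\ $g \approx \exp(p(t_\ast))$ after realigning. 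In the fast case one uses that $\Ad_{u_{-t}}(r)$ is, on the relevant subinterval, within $\ll \norm{r}^\star k_0^\star$ of a \emph{fixed} vector $w := p(t_\ast)-r$ of size $\approx \rho_0$ (this is the polynomial-almost-constancy lemma: a bounded-degree polynomial curve that travels distance $\rho_0$ is nearly constant on a definite fraction of the traversal time), so the integrand is $\ll \metric(e,g\exp(-w))\Sob_d(f) \ll (\cdots)\Sob_d(f)$ by \ref{item:soblip}; this gives $\ll_d k_0^\star\norm{r}^\star$-almost invariance under $\exp(w)$, and then \ref{item:alminv1} upgrades $w$ to a unit vector $Z = w/\norm{w}\in\Fr$ at the cost of another power of the error. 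In the slow case, $\norm{p(t)-r}\le\rho_0$ throughout a window of length $\varepsilon^{-1/\star}$, so directly $x_2 u_t \approx x_1 u_t \exp(r + O(\rho_0))$ and comparing averages over that window gives $\ll_d k_0^\star\norm{r}^\star$-almost invariance under $\exp(r)$, again upgraded to a unit vector in $\Fr$ via \ref{item:alminv1}; here the lower bound $\norm{r}\ge\varepsilon^{1/4}$ in \eqref{eq:addinv-displcond} is exactly what keeps the conclusion non-vacuous.

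There are two technical points I would spell out. (i) The displacement $g$ between $x_1 u_{t_\ast}$ and $x_2 u_{t_\ast}$ is only approximately $\exp(p(t_\ast))$, and more importantly it may have a component along $\Fs$; but $\mu$ is already $\varepsilon$-almost invariant under $\Fs$, and $\varepsilon$ is much smaller than the target error $k_0^\star\norm{r}^\star$ (since $\norm{r}\ge\varepsilon^{1/4}$), so we may project $g$ onto $\Fr$ using the undistorted complement and absorb the $\Fs$-part into the error — this is precisely where undistortedness of $\Fr$ is used, to control the projection uniformly. (ii) To even apply genericity of $x_2$ we must know the \emph{shifted} times $t$ still lie in $[k_0,\varepsilon^{-1}]$ after the realignment, which forces the time horizon $t_\ast$ to be at most a fixed small power of $\varepsilon^{-1}$; this is why the threshold $\rho_0$ and the exponents are chosen as absolute powers.

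The main obstacle, and the place requiring the most care, is case (1) — the ``fast divergence'' regime — where one must show that the polynomial curve $t\mapsto \Ad_{u_{-t}}(r)$ is genuinely close to a fixed vector on a \emph{definite proportion} of the traversal interval $[0,t_\ast]$, with quantitative dependence only on $\norm{r}$ and $k_0$. This is a statement about bounded-degree polynomial maps (an effective version of the fact that such a curve cannot change direction too often), and one must combine it with the comparison of Birkhoff averages in such a way that the error contributed by the endpoints of the interval, by the approximation $g\approx\exp(p(t_\ast))$, and by the $\Fs$-component all remain $\ll_d k_0^\star\norm{r}^\star$. Everything else is bookkeeping of exponents of the form $A^\star$; the substantive input is this polynomial-divergence lemma together with \ref{item:soblip} and \ref{item:alminv1}.
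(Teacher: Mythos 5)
Your overall strategy matches the paper's proof: set up the polynomial divergence $p(t)=\Ad_{u_{-t}}(r)$, run the dichotomy on whether $\norm{p(t)-r}$ stays small over a window of length $\approx\varepsilon^{-\tempered}$, and compare Birkhoff averages via genericity. The slow case is handled correctly, and you have correctly identified the role of the lower bound $\norm{r}\geq\varepsilon^{1/4}$ in keeping the conclusion non-vacuous after \ref{item:alminv1}. However, the fast case as you describe it has a genuine gap. You arrive at $\ll_d k_0^\star\norm{r}^\star$-almost invariance under $\exp(w)$ where $w=p(t_\ast)-r$ has size $\approx\rho_0$, a \emph{fixed} constant, and then claim \ref{item:alminv1} ``upgrades $w$ to a unit vector $Z=w/\norm{w}$ at the cost of another power of the error.'' But \ref{item:alminv1} gives error $\ll(c\cdot\text{err}+\norm{Z})$ upon passing to $cZ$; if $\norm{w}\approx\rho_0\approx 1$ then $c\approx 1$ and the resulting bound is $\ll 1$ --- vacuous. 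That lemma only produces useful information when the starting vector is \emph{small}, roughly of order the square root of the error.

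The paper circumvents this by first establishing almost invariance under $\exp(q(s))$ for an entire interval $s\in[\tfrac12,2]$, where $q(s)=\Ad_{u_{-sT}}(r)-r$ and $T$ is chosen so that $\sup_{s\in[0,2]}\norm{q(s)}=\kappacomm$ (a fixed threshold) while $\sup\norm{q'(s)}\ll 1$. It then takes the \emph{small difference} $\exp(q(s+s'))\exp(-q(s))$ for $s'\approx\norm{r}^\star$ with $\norm{q(s+s')-q(s)}\gg s'$, decomposes it via a Baker--Campbell--Hausdorff estimate as $\exp(Z_\Fr)\exp(Z_\Fs)$ with $\norm{Z_\Fr}\asymp\norm{r}^\star$ (small!) and $\norm{Z_\Fs}\leq\norm{q(s+s')-q(s)}$, absorbs the $\Fs$-part using the given $\varepsilon$-almost invariance under $\Fs$, and only then applies \ref{item:alminv1} --- now to a vector of the right small size. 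This ``difference'' step is the missing ingredient in your fast case.

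A secondary correction to your technical point (i): the displacement from $x_1u_t$ to $x_2u_t$ is exactly $\exp(\Ad_{u_{-t}}(r))$, and since $\Fr$ is $H$-invariant and $U\subset H$, this lies in $\exp(\Fr)$ with \emph{no} $\Fs$-component, contrary to your worry. The place where an $\Fs$-component genuinely appears (and where undistortedness of $\Fr$ is used) is the BCH decomposition of the product $\exp(q(s+s'))\exp(-q(s))$, because $[\Fr,\Fr]\not\subset\Fr$ in general --- a step your argument does not include.
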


\begin{proof}
In view of the conclusion, we may and will assume $k_0 \ll \varepsilon^A$ for some absolute large constant $A>0$.
Indeed, $\mu$ is trivially $\ll 1$-almost invariant under any vector in $\Fg$ of norm $1$ by \ref{item:sobineq}.
To simplify notation, we set $t_0 = k_0^{\tempered}$ (following the definition of the discrepancy operator in Definition~\ref{def:discrepancy}).
We prove the proposition by case distinction according to whether or not
\begin{align}\label{eq:casedistinction addinvproof}
\sup_{t \in [t_0,\varepsilon^{-\tempered}]} \norm{\Ad_{u_{-t}}(r)-r} \leq 1.
\end{align}
Note that (in any orthonormal basis of $\mathfrak{r}$ any component of) $p(t) = \Ad_{u_{-t}}(r)-r$ is a polynomial in $t$ with $p(0) = 0$; there is no control on the speed of growth of $p(\cdot)$ (from below) e.g.~since $r$ could be $U$-invariant.

Suppose first that \eqref{eq:casedistinction addinvproof} holds.
By a fundamental property of polynomials, we have (using $p(0) = 0$) for any $t_1 \leq \varepsilon^{-\tempered}$
\begin{align}\label{eq:poly on shorter interval}
\sup_{t \in [t_0,t_1]} \norm{p(t)} 
\ll t_1 \varepsilon^{\tempered}.
\end{align}

By the genericity assumption, we have for any $n \in [k_0,\varepsilon^{-1}]$, $f \in \compactsmooth(X)$ and $i=1,2$
\begin{align*}
\left|\mu(f) -  \frac{1}{(n+1)^{\tempered}-n^{\tempered}} \int_{n^{\tempered}}^{(n+1)^{\tempered}} f(x_iu_t)\de t\right|
\leq \frac{1}{n}\Sob_d(f).
\end{align*}
Also, note that
\begin{align*}
f(x_2u_t) &= f(x_1\exp(r)u_t) = f\big(x_1u_t\exp(\Ad_{u_{-t}}(r)\big) \\
&=f\big(x_1u_t\exp(r + p(t))\big).
\end{align*}
Notice that
\begin{align*}
\exp(r+p(t)) = \exp(r)(I+O(\norm{p(t)}))
\end{align*}
so that for all $t \in [0,2\varepsilon^{-\tempered/2}]$ using \ref{item:soblip}
\begin{align*}
f(x_2u_t) &= f(x_1u_t \exp(r+p(t)) = f(x_1u_t\exp(r)) + O(\varepsilon^{\tempered/2} \Sob_d(f)).
\end{align*}
Thus, we have for $n \in [k_0,\varepsilon^{-1/2}]$
\begin{align*}
\mu(f) &= \frac{1}{(n+1)^{\tempered}-n^{\tempered}} \int_{n^{\tempered}}^{(n+1)^{\tempered}} f(x_2u_t)\de t + O(\tfrac{1}{n}\Sob_d(f)) \\
&=\frac{1}{(n+1)^{\tempered}-n^{\tempered}} \int_{n^{\tempered}}^{(n+1)^{\tempered}} f(x_1u_t\exp(r))\de t + O(\tfrac{1}{n}\Sob_d(f) + \varepsilon^{\tempered/2}\Sob_d(f))\\
&=\mu(\exp(r).f) + O(\tfrac{1}{n}\Sob_d(f) + \varepsilon^{\tempered/2}\Sob_d(f))
\end{align*}
where we used $\Sob_d(\exp(r).f) \ll \Sob_d(f)$ (cf.~\ref{item:sobcont}) in the last step.
We now choose $n = \lfloor \varepsilon^{-1/2} \rfloor$ 
and obtain that $\mu$ is $\ll \varepsilon^{1/2}$-almost invariant under $\exp(r)$.
Letting $Z = \frac{r}{\norm{r}}$ we deduce from \ref{item:alminv1} that $\mu$ is $\ll(\frac{1}{\norm{r}}\varepsilon^{1/2} + \norm{r})$-almost invariant under $Z$. 
If \eqref{eq:addinv-displcond} holds, $\mu$ is $\ll \norm{r}$-almost invariant under $Z$.
This proves the proposition assuming that \eqref{eq:casedistinction addinvproof} holds.

We suppose now that \eqref{eq:casedistinction addinvproof} fails. 
In this case, one can proceed exactly as in \cite[Proposition~10.1]{emv}. Define
\begin{align*}
T = \tfrac{1}{2}\inf\big\{t \in [t_0,\varepsilon^{-\tempered}]: \norm{p(t)} \geq \kappacomm\big\}
\end{align*}
where $\kappacomm>0$ is as in \eqref{eq:emv eq 10.1} below.
Since the coefficients of $p$ are bounded in terms of $\norm{r}$, we have $T \gg \norm{r}^{-\star}$.
The polynomial map $q(s) = p(sT)$  satisfies $\sup_{s\in [0,2]}\norm{q(s)} = \kappacomm$ and $\sup_{s\in [0,2]}\norm{q'(s)} \ll 1$.
Now let $n \in \N$ be such that
\begin{align*}
n \in [T^{\frac{1}{\tempered}}, (2T)^{\frac{1}{\tempered}}-1].
\end{align*}
In particular, $n \geq T^{\frac{1}{\tempered}} \gg \norm{r}^{-\star}$.
Also, $n \leq (2 T)^{\frac{1}{\tempered}} \leq \varepsilon^{-1}$.
For any $t,t_0\in [n^{\tempered},(n+1)^{\tempered}]$ and $f \in \compactsmooth(X)$ we have
\begin{align*}
f(x_2u_t) 
&= f(x_1 u_t \exp(r + p(t))\\
&= f(x_1 u_t \exp(p(t))) + O(\norm{r}\Sob_d(f))\\
&= f(x_1u_t \exp(p(t_0))) + O((\norm{r}+ T^{-\frac{1}{\tempered}})\Sob_d(f))
\end{align*}
where we applied \eqref{eq:emv eq 10.1} in the second step and \ref{item:soblip} together with $|t-t_0|\ll n^{\tempered-1} \ll T^{1-\frac{1}{\tempered}}$ in the third.
Overall, we obtain by definition of genericity
\begin{align*}
|\mu^{\exp(q(s_0))}(f)-\mu(f)| \ll (\norm{r}+ T^{-\frac{1}{\tempered}})\Sob_d(f) \ll \norm{r}^{\kappa}\Sob_d(f)
\end{align*}
for $s_0 = t_0/T$ and some absolute $\kappa>0$.
This proves that $\mu$ is $\ll_d \norm{r}^{\kappa}$-almost invariant under $\exp(q(s))$ for $s \in [\frac{1}{2},2]$.

We shall need at this point a replacement for \cite[Eq.~(10.1)]{emv}. We claim that (because $\Fr$ is undistorted) there is $\kappacomm>0$ (not depending on $\Fs$ or $\Fr$) so that the following is true:
for any $r_1,r_2 \in \mathfrak{r}$ with $\norm{r_1},\norm{r_2} \leq \kappacomm$ one can write
\begin{align}\label{eq:emv eq 10.1}
\exp(r_1)\exp(r_2)^{-1} = \exp(Z_{\Fr})\exp(Z_{\Fs})
\end{align}
for some $Z_{\Fr} \in \mathfrak{r}$ and $Z_{\Fs} \in \Fs$ so that 
\begin{align*}
\norm{Z_{\Fs}} \leq \norm{r_1-r_2},\
\kappacomm \norm{r_1-r_2} \leq \norm{Z_{\Fr}} \leq \tfrac{1}{\kappacomm}\norm{r_1-r_2}.
\end{align*}
Indeed, the derivative of the map $\Phi:(r,s)\in \Fr\times \Fs \mapsto \log(\exp(r)\exp(s))$ at the origin is the identity and, 
as can be seen from the Baker-Campbell-Hausdorff formula,
$\norm{\mathrm{D}_u \Phi -\mathrm{id}} \ll \norm{u}$ using that $\Fr$ is undistorted.
This implies \eqref{eq:emv eq 10.1}.

Set $s'= \norm{r}^{\kappa}$ and suppose that $s,s+s' \in [\frac{1}{2},2]$ are such that $\norm{q(s)-q(s+s')} \gg s'$.
As $\mu$ is $\ll_d \norm{r}^{\star}$-almost invariant under $\exp(q(s))$ and $\exp(q(s+s'))$, it is also $\norm{r}^{\star}$-almost invariant under $\exp(q(s+s'))\exp(-q(s))$.
Applying \eqref{eq:emv eq 10.1} for $v_1 = q(s+s')$ and $v_2 = q(s)$ and using $\varepsilon$-almost invariance under $\Fs$ we obtain that 
$\mu$ is $\ll \norm{r}^\kappa$-almost invariant under $\exp(Z)$ for some $w \in \Fr$ with $\norm{Z} \asymp \norm{r}^{\kappa/2}$.
The proposition now follows from \ref{item:alminv1} rescaling $Z$ to norm $1$.
\end{proof}
\section{Existence of transversal generic points}\label{sec:existence transverse}

The purpose of this section is to establish the existence of two generic points with displacement transversal to an arbitrary intermediate Lie algebra $\Fh \subset \Fs \subsetneq \Fg$. 
Such a pair of points gives rise to additional almost invariance via Proposition~\ref{prop:add inv} (see also \S\ref{sec:intro-addinv}).

\begin{proposition}[Pairs of generic points with transversal displacement]\label{thm:trans2}
Let $d \geq 1$.
There exist constants $\Atransres>1$ (depending on $\dimemb$), $k_0=k_0(d)\geq 1$, $d'>d$, and $\Ctranspts>1$ (depending on $G,\Gamma,U$) with the following property.

Let $\mu$ be the Haar probability measure on a periodic orbit $xH$.
Let $T \geq \Ctranspts$ with
\begin{align}\label{eq:diocondonT}
T \leq \frac{\minobs(xH)^{\frac{1}{\Adiocond}}}{\Ctranspts\minht(xH)}.
\end{align}
Suppose that $\mu$ is $\varepsilon$-almost invariant under an intermediate Lie algebra $\Fg \supsetneq \Fs \supset \Fh$ with respect to $\Sob_{d}$ and let $\Fr$ be an undistorted $H$-invariant complement to $\Fs$.
Then there exist two points $x_1,x_2 \in X$ with the following properties:
\begin{itemize}
\item $x_1,x_2$ are $[k_0,\varepsilon^{-\beta(d)}]$-generic with respect to $\Sob_{d'}$.
\item There exists $r \in \Fr$ so that $x_2= x_1 \exp(r)$ and
\begin{align*}
T^{-\Atransres} \leq \norm{r} \leq T^{-1/\Atransres}
\end{align*}
\end{itemize}
\end{proposition}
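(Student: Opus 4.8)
The plan is to produce the pair $x_1,x_2$ by a pigeonhole/covering argument along a long $U$-orbit inside $xH$, combined with the effective ergodic theorem of \S\ref{sec:efferg} and the effective closing lemma of \S\ref{sec:diopoints}. Fix $T$ as in the statement, and set a small resolution parameter $\delta_2 = T^{-a}$ transversal to $\Fs$ and a larger $\delta_1 = T^{-b}$ along $\Fs$, with $b$ much smaller than $a$ and both to be pinned down in terms of $\Aeffcloscor$ and $\Atransres$ at the end. First I would fix $\eta = T^{-c}$ with $c$ chosen so that $T \geq \Ceffcloscor \eta^{-\Aeffcloscor}$ (using \eqref{eq:diocondonT}, which guarantees $T$ is polynomially comparable to the relevant discriminant/height quantities and in particular large). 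The assumption \eqref{eq:diocondonT} together with Corollary~\ref{cor:dio on H-orbit} (with the choice $\dioeps_\eta$ from \eqref{eq:dioepschoice}) then furnishes, after ruling out case (2) of that corollary by \eqref{eq:diocondonT}, that the set of $y \in xH$ which fail to be $(\dioeps_\eta,T)$-Diophantine or lie outside $X_\eta$ has $\mu$-measure $\ll \eta^{\kappaeffav}$, hence small.

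\textbf{Key steps.} Next I would invoke Proposition~\ref{prop:ptsalongU-orbitswith many close gen pts} with the compact neighborhood $\Omega \subset \Fs$ taken to be a ball of radius $\asymp \delta_1$ inside the unit ball: for $k_0 = k_0(d)$ a large enough constant, the fraction of $(x,t,Z) \in X \times [0,t_0] \times \Omega$ (with $t_0 \asymp T$) for which $xu_t\exp(Z)$ fails to be $[k_0,\varepsilon^{-\beta}]$-generic with respect to $\Sob_{d'}$ is $\ll_d k_0^{-1}$, hence $< \tfrac1{10}$. Combining this with the Diophantine-avoidance bound and a Fubini argument, I would find a single base point $x_0 \in xH$ — which is itself $(\dioeps_\eta,T)$-Diophantine and in $X_\eta$, since these properties hold for $\mu$-a.e.\ point — such that for at least, say, $90\%$ of $(t,Z) \in [0,t_0]\times \Omega$ the point $x_0 u_t \exp(Z)$ is generic in the above sense. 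Now tessellate $X$ by the boxes $y\exp(B^{\Fs}_{\delta_1})\exp(B^{\Fr}_{\delta_2})$; the $U$-orbit segment $\{x_0 u_t : t \in [0,T]\}$ together with the $\exp(\Omega)$-thickening sweeps out a set of measure $\asymp T\,\mathrm{vol}(\Omega)\,(\text{one }\Fr\text{-slice})$, and the key dichotomy is: either (i) this orbit segment spends a disproportionate fraction of time in one single box, or (ii) it does not.

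\textbf{The two cases.} In case (ii), a counting/pigeonhole argument inside the boxes traversed produces two times $s,t \in [0,T]$ with $x_0 u_s$ and $x_0 u_t$ landing (after generic $\Fs$-perturbations within $\Omega$, which we are free to choose generic by the $90\%$ bound) in the same box but in \emph{different} $\Fr$-slices; realigning along $\Fs$ using the generic perturbations gives two points $x_1 = x_0 u_s \exp(Z_1)$, $x_2 = x_0 u_t \exp(Z_2)$ that are both $[k_0,\varepsilon^{-\beta}]$-generic and satisfy $x_2 = x_1 \exp(r)$ with $r \in \Fr$ of size $T^{-\Atransres} \leq \norm{r} \leq \delta_2 = T^{-1/\Atransres}$ (the lower bound because distinct slices differ at a scale governed by the box count, which is polynomial in $T$); this is exactly the desired conclusion. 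In case (i), I would feed the orbit segment into Corollary~\ref{cor:effective closing}: the set $\mathcal E \subset [0,T]$ of return times to the heavy box has $|\mathcal E| > T^{1-1/\Aeffcloscor}$, and for $s,t \in \mathcal E$ the displacement $g_{st}$ with $x_0 u_s = x_0 u_t g_{st}$ satisfies $|g_{st}|\leq 2$ and $\metric(g_{st}.\hat v_\Fs, \hat v_\Fs) \leq T^{-1}$ (both because $x_0 u_s, x_0 u_t$ lie in the same small box, whose $\Fs$-direction has radius $\delta_1$ and which we arrange so that $\delta_1 \leq T^{-1}$ up to the metric on the projective space) — applied to a non-normal $\Fs$ (permitted by Remark~\ref{rem:notnormal}, since the case $\Fs$ a Lie ideal need not be considered) this forces $\Fs$ to be a Lie ideal, a contradiction. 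Hence case (i) cannot occur and we are always in case (ii).

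\textbf{Main obstacle.} The delicate point is bookkeeping the exponents: one must choose $\delta_1 = T^{-b}$ small enough that the box $\Fs$-radius beats the $T^{-1}$ threshold required by Corollary~\ref{cor:effective closing}, yet large enough — compared to $\delta_2 = T^{-a}$ and the total number of boxes visited — that the lower bound $\norm{r} \geq T^{-\Atransres}$ in case (ii) survives, and simultaneously $\eta = T^{-c}$ and all the avoidance/ergodic error terms ($\eta^{\kappaeffav}$, $k_0^{-1}$) must remain below the combinatorial threshold ($\tfrac1{10}$, say) used in the pigeonhole. All of these are polynomial constraints in $T$, so a valid choice exists once $\Atransres$ is taken large enough in terms of $\Aeffcloscor$, $\kappaeffav$, and $\dim(\G)$; carrying the perturbation vectors $Z_1, Z_2 \in \Omega$ through the realignment while keeping genericity intact (using that they were chosen in the $90\%$-good set) is the other place where care is needed, but it is routine given Proposition~\ref{prop:ptsalongU-orbitswith many close gen pts}. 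I expect the exponent bookkeeping to be the genuine technical heart; the geometric picture — heavy box forces closing lemma, no heavy box forces a transversal return — is robust.
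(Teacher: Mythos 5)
Your overall strategy --- Fubini with the effective ergodic theorem and Diophantine avoidance to find a good base point, tessellation into thin boxes, and the dichotomy between a ``heavy box'' (ruled out by the effective closing lemma since $\Fs$ is not an ideal) and a pigeonhole producing two transversally displaced generic points --- is exactly the paper's argument. However, there is one genuine error in your treatment of the closing-lemma hypothesis, and it is not cosmetic.

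You claim that the condition $\metric(g_{st}.\hat v_{\Fs}, \hat v_{\Fs}) \leq T^{-1}$ from Corollary~\ref{cor:effective closing} is arranged ``because $x_0u_s, x_0u_t$ lie in the same small box, whose $\Fs$-direction has radius $\delta_1$ and which we arrange so that $\delta_1 \leq T^{-1}$.'' This attributes the bound to the wrong direction. Since $\Fs$ is a subalgebra, $\exp(\Fs)$ stabilizes the line $\bigwedge^{\dim\Fs}\Fs$, so an $\Fs$-displacement moves $\hat v_{\Fs}$ not at all; it is the $\Fr$-component of $g_{st}$ (controlled by $\delta_2$) that perturbs $\hat v_{\Fs}$, and it is $\delta_2 \leq T^{-1}$ (up to constants) that one needs. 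This is not merely a bookkeeping slip: demanding $\delta_1 \leq T^{-1}$ is incompatible with the rest of your argument and with your own announced choice $b \ll a$ (so that $\delta_1 = T^{-b}$ with $b$ small, i.e.~$\delta_1 \gg T^{-1}$). Concretely, the pigeonhole step in case (ii) requires the number of visited $\Fr$-slices within a single $\Fs$-column to exceed $1$; if you tessellate $\Fs$ at scale $T^{-1}$ you pay a factor $\asymp T^{\dim\Fs}$ in the number of $\Fs$-columns, which swamps the $T^{1/(2\Aeffcloscor)}$ gain from the non-concentration claim and the count becomes vacuous. The paper resolves this by taking the $\Fs$-resolution \emph{coarse} --- on the order of $T^{-\kappa}$ with $\kappa \asymp (\Aeffcloscor \dim\Fg)^{-1}$ --- and the $\Fr$-resolution fine ($T^{-2}$), using the fact that only the fine $\Fr$-direction enters the projective-distance hypothesis.

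Two smaller points. First, your upper bound $\norm{r}\le\delta_2$ is not what the argument yields: the upper bound comes from the spread of the $\gg T^{1/(2\Aeffcloscor)-\kappa\dim\Fs}$ visited $\Fr$-slices inside a bounded region, which by a packing/pigeonhole argument gives $\norm{r}\ll T^{-c}$ for some small $c>0$ depending on $\Aeffcloscor,\kappa,\dim\Fr$; both bounds are absorbed into $\Atransres$. Second, the paper uses a fixed $\eta_0$ (depending on $\Gamma,U$) rather than a shrinking $\eta = T^{-c}$; your choice works too provided $c < 1/\Aeffcloscor$ so that $T \geq \Ceffcloscor\eta^{-\Aeffcloscor}$ holds, but the fixed $\eta_0$ is cleaner.
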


We refer to \S\ref{sec:intro-transv} for an outline of the proof which, in particular, relies on the effective closing lemma in the work \cite{EffClosing} with Lindenstrauss, Margulis, Mohammadi and Shah (see also Theorem~\ref{thm:effectiveclosing}) and the effective ergodic theorem in Proposition~\ref{prop:ptsalongU-orbitswith many close gen pts}.
The rest of the section is dedicated to the proof of Proposition~\ref{thm:trans2}.

We fix a small neighborhood $\Omega \subset \Fs$ of zero to be specified later.
Let $d \geq 1$ be arbitrary and let $d'>d$, $\beta = \beta(d)>0$ be as in Proposition~\ref{prop:ptsalongU-orbitswith many close gen pts}.
The following technical lemma constructs a set of points in $X$ with good properties. 

\begin{lemma}[Existence of a set of 'good' points]\label{lemma:Fubini}
There exist $k_0 \geq 1$ depending on $d$ as well as $\eta_0 >0$ depending on $\Gamma$, $U$ with the following property. 

Let $\dioeps = \dioeps_{\eta_0}$ be as defined in \eqref{eq:dioepschoice} and let $T>0$ with
\begin{align}\label{eq:diocondonT2}
\Ceffcloscor\eta_0^{-\Aeffcloscor} \leq T \leq \CeffavcorH^{-1/\Adiocond}\eta_0 \frac{\minobs(xH)^{\frac{1}{\Adiocond}}}{\minht(xH)}.
\end{align}
Then there exists a non-empty subset $X_{\mathrm{good}} \subset X$ with the following properties:
\begin{enumerate}[(i)]
\item  For any $x \in X_{\mathrm{good}}$ the measure of the set of $t \in [0,T]$ such that
\begin{align*}
\vol\big(\big\{Z \in \Omega: xu_t\exp(Z) \text{ is } [k_0,\varepsilon^{-\beta}]\text{-generic w.r.t. } \Sob_{d'}\big\}\big)
< \frac{9}{10} \vol(\Omega)
\end{align*}
is at most $10^{-10}T$.
\item Any $x \in X_{\mathrm{good}}$ is $(\dioeps,T)$-Diophantine.
\item For any $x \in X_{\mathrm{good}}$ 
\begin{align*}
|\{t \in [0,T]: xu_t \not\in X_{\eta_0}\}| \leq 10^{-10}T.
\end{align*}
\end{enumerate}
\end{lemma}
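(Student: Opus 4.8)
The plan is to produce $X_{\mathrm{good}}$ by intersecting three "bad sets have small measure" statements on the $H$-orbit $xH$ and checking the intersection is nonempty (in fact of measure close to $1$). First I would fix $\eta_0 \in (0,1/2)$ small enough (depending on $\Gamma, U$) so that Corollary~\ref{cor:dio on H-orbit} and Proposition~\ref{prop:ptsalongU-orbitswith many close gen pts} apply, and choose $\dioeps = \dioeps_{\eta_0}$ as in \eqref{eq:dioepschoice}; note \eqref{eq:dioepschoice} is precisely \eqref{eq:dioepsest Hcase}, so the hypothesis of Corollary~\ref{cor:dio on H-orbit} holds. The upper bound in \eqref{eq:diocondonT2} is exactly the quantitative condition ruling out case (2) of Corollary~\ref{cor:dio on H-orbit}: if case (2) held, there would be $\BM \in \classH$ with $xH \subset \Gamma\BM(\R)g$ and $\disc(\Gamma\BM(\R)g) \leq \CeffavcorH \minht(xH)^\Adiocond T^\Adiocond \eta_0^{-\Adiocond} < \minobs(xH)$, contradicting the definition \eqref{eq:ethdef} of $\minobs(xH)$. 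Hence case (1) of Corollary~\ref{cor:dio on H-orbit} holds, giving
\[
\mu\big(\{y : y \text{ not } (\dioeps,T)\text{-Diophantine or } y \notin X_{\eta_0}\}\big) \leq \Ceffavgeom \eta_0^{\kappaeffav}.
\]
Shrinking $\eta_0$ further we may assume $\Ceffavgeom \eta_0^{\kappaeffav} < 10^{-20}$, which will control the bad sets for (ii) and (iii) simultaneously.

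Next I would treat (i) via a Fubini/Markov argument applied to Proposition~\ref{prop:ptsalongU-orbitswith many close gen pts}. Apply that proposition with $t_0 = T$ and $\Omega \subset \Fs$ the chosen neighborhood of $0$ in the unit ball: it gives $\beta = \beta(d)$, $d' > d$, and (after choosing $k_0 = k_0(d)$ large enough) a bound $\ll_d k_0^{-1}$ on the fraction of triples $(x,t,Z) \in X \times [0,T] \times \Omega$ for which $xu_t\exp(Z)$ is not $[k_0,\varepsilon^{-\beta}]$-generic w.r.t.\ $\Sob_{d'}$. By Fubini (integrating first over $Z$, then $t$, then $\mu$), the set of $x \in X$ for which the inner average over $(t,Z)$ of the indicator of "bad" exceeds $10^{-10}$ has $\mu$-measure $\ll_d k_0^{-1}$; applying Markov once more in the $Z$-variable, for such "not-too-bad" $x$ the set of $t \in [0,T]$ on which the $Z$-fraction of bad points exceeds $\tfrac{1}{10}$ has measure at most $10^{-10}T$. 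So the set $A_1$ of $x$ violating (i) has $\mu(A_1) \ll_d k_0^{-1}$; choose $k_0$ large enough (depending on $d$, hence legitimately) that $\mu(A_1) < 10^{-10}$.

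Finally I would assemble: let $A_2 = \{x : x$ not $(\dioeps,T)$-Diophantine$\}$ and $A_3 = \{x : |\{t \in [0,T] : xu_t \notin X_{\eta_0}\}| > 10^{-10}T\}$. For $A_2$, $\mu(A_2) \leq \Ceffavgeom\eta_0^{\kappaeffav} < 10^{-20}$ directly from case (1) of Corollary~\ref{cor:dio on H-orbit} applied with $x$ replaced by a Birkhoff-generic representative and then by $U$-invariance of $\mu$ extended to all of $xH$. For $A_3$, Markov's inequality applied to $\int \frac{1}{T}|\{t \in [0,T]: xu_t \notin X_{\eta_0}\}| \, d\mu(x) = \mu(X \setminus X_{\eta_0}) \leq \Ceffavgeom\eta_0^{\kappaeffav}$ (using $U$-invariance of $\mu$) gives $\mu(A_3) \leq 10^{10}\Ceffavgeom\eta_0^{\kappaeffav} < 10^{-5}$ after shrinking $\eta_0$. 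Set $X_{\mathrm{good}} = X \setminus (A_1 \cup A_2 \cup A_3)$; then $\mu(X_{\mathrm{good}}) \geq 1 - 10^{-10} - 10^{-20} - 10^{-5} > 0$, so $X_{\mathrm{good}}$ is nonempty and by construction satisfies (i), (ii), (iii). The main obstacle is bookkeeping the order of quantifiers in the Fubini-Markov step for (i): one must fix $k_0$ depending only on $d$ (allowed) while keeping the $t$-measure threshold and the $Z$-fraction threshold as fixed absolute constants, and verify that Proposition~\ref{prop:ptsalongU-orbitswith many close gen pts}'s hypothesis "$\varepsilon \leq \varepsilon_0$" is harmless here — it will be, since in the proof of Proposition~\ref{thm:trans2} one only invokes this lemma in the regime where $\mu$ is $\varepsilon$-almost invariant with $\varepsilon$ small, and one may assume $\varepsilon \leq \varepsilon_0$ outright (otherwise the target almost-invariance conclusion is vacuous). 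A secondary point is ensuring the lower bound $T \geq \Ceffcloscor\eta_0^{-\Aeffcloscor}$ in \eqref{eq:diocondonT2} is consistent with the upper bound, i.e.\ that $\minobs(xH)$ is large enough; but this is exactly what a hypothesis like \eqref{eq:diocondonT} in Proposition~\ref{thm:trans2} will guarantee, so within Lemma~\ref{lemma:Fubini} it is simply assumed.
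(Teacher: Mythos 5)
Your proposal is correct and follows essentially the same route as the paper's proof: a Fubini–Chebyshev argument on Proposition~\ref{prop:ptsalongU-orbitswith many close gen pts} for (i), Corollary~\ref{cor:dio on H-orbit} with the upper bound in \eqref{eq:diocondonT2} ruling out its alternative (2) for (ii), and a Markov bound on $\mu(X\setminus X_{\eta_0})$ via $U$-invariance for (iii), then intersecting the three complements. The only differences are cosmetic choices of numerical thresholds.
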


\begin{proof}
The lemma merely combines already established results.
We begin by finding a `large' set of points as in (i).
For any $x \in X$ and $t \in [0,T]$ set
\begin{align*}
f(x,t) = \frac{\vol\big( \{Z \in \Omega: xu_t \exp(Z) \text{ is not } [k_0,\varepsilon^{-\beta}]\text{-generic}\}\big)}{\vol(\Omega)}.
\end{align*}
By Proposition~\ref{prop:ptsalongU-orbitswith many close gen pts} we have
\begin{align*}
\frac{1}{T}\int_{0}^T\int_X f(x,t) \de \mu(x) \de t \ll k_0^{-1}.
\end{align*}
By Chebyshev's inequality, this shows that
\begin{align*}
\frac{1}{T}\int_0^T f(x,t) \de t \leq 10^{-11}
\end{align*}
for $x \in X$ outside of a set of $\mu$-measure $\ll k_0^{-1}$.
For any such $x$, we have after applying Chebyshev's inequality again
\begin{align*}
|\{t \in [0,T]: f(x,t) \geq \frac{1}{10}\}| \leq 10^{-10}T.
\end{align*}
Choosing $k_0$ large enough, we can thus ensure the set of points $x \in X$ satisfying (ii) has measure at least $\frac{9}{10}$.

We turn to the set of points as in (ii).
Choosing $\eta_0$ small enough (depending on $\Gamma$ and $G$) to ensure that $\Ceffavgeom \eta_0^{\kappaeffav} \leq 10^{-11}$ we obtain that the set of points as in (iii) has measure least $1-10^{-11}$ by Corollary~\ref{cor:dio on H-orbit} in view of our restrictions on $T$.

For the set of points in (iii) we also apply Corollary~\ref{cor:dio on H-orbit} to obtain that $\mu(X \setminus X_{\eta_0}) \leq 10^{-11}$.
This implies by $U$-invariance of $\mu$
\begin{align*}
\int_X \frac{|\{t \in [0,T]: xu_t \not \in X_{\eta_0}\}|}{T} \de \mu(x) \leq 10^{-11}.
\end{align*}
In particular, the set of points $x \in X$ with $|\{t \in [0,T]: xu_t \not \in X_{\eta_0}\}| \geq 10^{-10}$ has measure at most $\frac{1}{10}$.

We have shown that the set of points as in (i), (ii), and (iii) has $\mu$-measure at least $\frac{7}{10}$ which proves the lemma.
\end{proof}

\begin{proof}[Proof of Proposition~\ref{thm:trans2}]
Throughout the proof of Proposition~\ref{thm:trans2}, we shall take $k_0$, $\eta_0$, $\dioeps$, and $X_{\mathrm{good}}$ to be as in Lemma~\ref{lemma:Fubini}. 
Moreover, $T>0$ is assumed to be larger than a fixed large constant and is taken to satisfy \eqref{eq:diocondonT2}.

Fix a point $x \in X_{\mathrm{good}}$ and write $\mathcal{T}\subset [0,T]$ for the set of times $t \in [0,T]$ with the following properties:
\begin{itemize}
\item $xu_t \in X_{\eta_0}$.
\item The set of $Z \in \Omega$ such that $xu_t\exp(Z)$ is $[k_0,\varepsilon^{-\beta}]$-generic has measure at least $\frac{9}{10}\vol(\Omega)$.
\end{itemize}
By Lemma~\ref{lemma:Fubini}, $|\mathcal{T}| \geq (1-10^{-9})T$.

Fix small neighborhoods $\Omega_\Fs\subset \Omega,\Omega_\Fr$ of zero in $\Fs$ resp.~$\Fr$.
Each of these neighborhoods is assumed to be a small ball around zero with respect to $\norm{\cdot}$ where the radius is chosen independently of $\Fs$ or $\Fr$.
We may assume that $\exp(\Omega_\Fs)\exp(\Omega_\Fr)$ is an injective set on the compact set $X_{\eta_0}$.
We denote for any $y \in X_{\eta_0}$
\begin{align*}
\phi_y: (Y_1,Y_2) \in \Omega_\Fs \times \Omega_\Fr \mapsto y\exp(Y_1)\exp(Y_2).
\end{align*}
We may assume that $\phi_y$ is a diffeomorphism onto its image $\Omega(y) := y\exp(\Omega_\Fs)\exp(\Omega_\Fr)$ where the derivative is uniformly close to the identity independently of $\Fs$ and $\Fr$ (using the argument after \eqref{eq:emv eq 10.1} based on $\Fr$ being undistorted).
By the pigeonhole principle, we may fix $y\in X_{\eta_0}$ such that
\begin{align*}
|\{t \in \mathcal{T}: xu_t \in \Omega(y) \}| \gg T.
\end{align*}
Write $\phi=\phi_y$ for simplicity.

Fix $\kappa \in (0,1)$ to be determined in the course of the proof.
We set for $R >0$
\begin{align*}
\Omega_\Fs(R) &:= \{Y \in \Omega_\Fs: \norm{Y} \leq R^{-\kappa}\},\\
\Omega_\Fr(R) &:= \{Y \in \Omega_\Fr: \norm{Y} \leq R^{-2}\},\\
B_Y(R) &:= Y +\Omega_\Fs(R)+\Omega_\Fr(R).
\end{align*}
There is a constant $\mathrm{C}>1$ such that if $\Omega_\Fs,\Omega_\Fr$ are small enough, we have that for any $Y \in \Omega_\Fs+ \Omega_\Fr$
\begin{align}\label{eq:elprop thinbox1}
\phi(B_Y(R)) \subset y'\exp(\Omega_\Fs(\mathrm{C}R)) \exp(\Omega_\Fr(\mathrm{C}R)) =: \Omega(y',\mathrm{C}R)
\end{align} 
for $y' = y \exp(Y)$. Furthermore, we may assume that for any $y'' \in \Omega(y',R)$
\begin{align}\label{eq:elprop thinbox2}
\Omega(y',R) \subset \Omega(y'',\mathrm{C}R).
\end{align}

\begin{claim*}
The set of $t \in \mathcal{T}$ with $xu_t \in \phi(B_Y(T))$ has measure at most $T^{1-1/(2\Aeffcloscor)}$ for all $Y \in \Omega_\Fs+ \Omega_\Fr$.
\end{claim*}

To prove the claim, we use the effective closing lemma in the form of Corollary~\ref{cor:effective closing}.
Suppose that there exists $Y \in \Omega_\Fs+ \Omega_\Fr$ so that the set $\mathcal{E}$ of times $t \in \mathcal{T}$ with $xu_t \in \phi(B_Y(T))$ has measure at least $T^{1-1/(2\Aeffcloscor)}$.
For any $s,t \in \mathcal{E}$ we have $xu_{s},xu_{t} \in \Omega(y\exp(Y),\mathrm{C}T)$ by \eqref{eq:elprop thinbox1}. 
Hence, $xu_{s} \in \Omega(xu_{t},\mathrm{C}^2T)$ by \eqref{eq:elprop thinbox2} and there exist $X^{\Fr}_{st}\in \Fr$, $X^{\Fs}_{st}\in\Fs$ so that
\begin{align*}
xu_s = xu_t\exp(X^{\Fs}_{st})\exp(X^{\Fr}_{st})
\end{align*}
with
\begin{align*}
\norm{X^{\Fr}_{st}} \leq (\mathrm{C}^2T)^{-2},\quad
\norm{X^{\Fs}_{st}} \leq (\mathrm{C}^2T)^{-\kappa}.
\end{align*}
Setting $g_{st}= (\exp(X^{\Fs}_{st})\exp(X^{\Fr}_{st}))^{-1}$, Corollary~\ref{cor:effective closing} applies whenever $T$ is sufficiently large.
But note that $\Fs$ is not an ideal (cf.~Remark~\ref{rem:notnormal}) which yields a contradiction and proves the claim.

Take a cover of $\Omega(y)$ by 'boxes' of the form $\phi(B_Y(T))$ with $O(1)$ overlaps.
The assumption implies that $\gg T^{1/(2\Aeffcloscor)}$ of the boxes in that cover contain a point of the form $xu_t$ for $t \in \mathcal{T}$.
Note that the partitioning is chosen not to be `too fine' in the directions of $\Fs$.
Indeed, $\Omega_\Fs$ is covered by $\ll T^{\kappa\dim(\Fs)}$ translates of $\Omega_{\Fs}(T)$.
Thus, $\gg T^{1/(2\Aeffcloscor)-\kappa\dim(\Fs)}$ many boxes in the $\Fr$ direction are reached by the $U$-orbit. 
In particular, we may find two times $t_1,t_2 \in \mathcal{T}$ and $Y_1 = Y_1^\Fs + Y_1^\Fr$, $Y_2 = Y_2^\Fs+ Y_2^\Fr$ such that the points
\begin{align*}
x_1 &= xu_{t_1} = y \exp(Y_1^\Fs) \exp(Y_1^\Fr),\\
x_2 &= xu_{t_2} = y \exp(Y_2^\Fs) \exp(Y_2^\Fr)
\end{align*}
satisfy
\begin{align*}
T^{-2} \leq \norm{Y_1^\Fr-Y_2^\Fr} 
\leq T^{-\frac{1}{2\dim(\Fr)}(1/(2\Aeffcloscor)-\kappa\dim(\Fs))}.
\end{align*}
We take $\kappa = (4\Aeffcloscor\dim(\Fg))^{-1}$.

As in \cite[\S14]{emv}, we now perturb $x_1,x_2$ in the $\Fs$-direction. For $i=1,2$ we denote by $W_i$ the set of $Z \in \Omega$ such that $x_i\exp(Z)$ is $[k_0,\varepsilon^{-\beta}]$-generic.
By construction, we have $\vol(W_i) \geq \frac{9}{10} \vol(\Omega)$.
As these sets $W_i$ have sufficiently large measure, one may argue as in \cite[p.~195]{emv} to find $Z_i \in W_i$ for $i=1,2$ such that
\begin{align*}
x_2 \exp(Z_2) = x_1 \exp(Z_1) \exp(Y)
\end{align*}
for some $Y \in \Fr$ with
\begin{align*}
T^{-2} \ll \norm{Y} \ll T^{-\frac{1}{2\dim(\Fr)}(1/(2\Aeffcloscor)-\kappa\dim(\Fs))}
\end{align*}
(assuming that $\Omega_{\Fs},\Omega,\Omega_{\Fr}$ are sufficiently small). 
Thus, the points $x_1 \exp(Z_1)$ and $x_2\exp(Z_2)$ satisfy the requirements of the proposition and we conclude.
\end{proof}

\section{Proof of Theorem~\ref{thm:main}}\label{sec:proofmain}

We prove Theorem~\ref{thm:main} by a simple induction process:
Starting with $\Fs_0 = \Fh$ we can apply Proposition~\ref{thm:trans2} to obtain two generic points with displacement transversal to $\Fh$. 
Using these two points we obtain that $\mu$ is almost invariant under a larger subalgebra $\Fs_1 \supsetneq \Fh$ by Proposition~\ref{prop:add inv} and Proposition~\ref{prop:effective generation}.
We repeat this procedure using $\Fs_1$. 
After at most $\dim(G)$ steps, one obtains that $\mu$ is almost invariant under $\Fg$.
(Note that the degree of the Sobolev norm $\Sob_d$ needs to be increased at every step.)
The following proposition then implies the theorem.

\begin{proposition}[{\cite[Prop.~15.1]{emv}}]\label{prop:alminv under G}
There is $\kappaalminvG>0$ with the following property. 
Let $\mu_{xG^+}$ be the $G^+$-invariant probability measure on $xG^+ \subset X$.
Suppose that $\mu$ is a probability measure on $xG^+$ which is $\varepsilon$-almost invariant under $\Fg$ with respect to $\Sob_d$ for some $d\geq 1$. Then
\begin{align*}
|\mu_{xG^+}(f) -\mu(f)|\ll_{\Gamma,d} \varepsilon^{\kappaalminvG/d}\Sob_d(f).
\end{align*}
\end{proposition}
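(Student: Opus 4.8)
The plan is to show $\mu$ is close to $\mu_{xG^+}$ by comparing both to the average of $\mu$ over a large ball in $G^+$: almost invariance under $\Fg$ controls the cost of forming this average, while the uniform spectral gap of $G^+$ acting on $L^2_0(xG^+)$ (Clozel's property $(\tau)$) forces the average itself to be close to $\mu_{xG^+}$. We may assume $\varepsilon<\tfrac12$, the bound being trivial otherwise by \ref{item:sobineq}. First I would upgrade almost invariance under $\Fg$ to almost invariance along metric balls $B_\rho\subset G^+$. Writing $g\in B_\rho$ as $g=g_1\cdots g_n$ with $g_j=\exp(Z_j)$, $n\ll\rho$, $\|Z_j\|\le 1$, and each partial product satisfying $|g_{j}\cdots g_n|\ll e^{\rho}$, and setting $f_{n+1}=f$, $f_j=(g_j\cdots g_n).f$, the telescoping identity
\[
\mu^g(f)-\mu(f)=\sum_{j=1}^n\bigl(\mu^{g_j}(f_{j+1})-\mu(f_{j+1})\bigr)
\]
together with $\varepsilon$-almost invariance and \ref{item:sobcont} gives, for all $f\in\compactsmooth(X)$,
\[
\sup_{g\in B_\rho}\,\bigl|\mu^g(f)-\mu(f)\bigr|\ \ll_d\ \varepsilon\,e^{c_1 d\rho}\,\Sob_d(f)
\]
for an absolute constant $c_1>0$ (depending on $\ASobcont$).

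Next I would regularize. Fix once and for all a nonnegative $\theta\in\compactsmooth(G^+)$ with $\int\theta=1$ supported in the unit ball, and let $\nu=\theta\ast\mu$ be the probability measure $\nu(f)=\int_{G^+}\theta(g)\,\mu^g(f)\,\de g$; by the displayed estimate with $\rho=1$, $|\nu(f)-\mu(f)|\ll_d\varepsilon\,\Sob_d(f)$. Writing $xG^+\cong\lquot{\Lambda}{G^+}$ (with $\Lambda$ the relevant arithmetic lattice), a standard unfolding and Cauchy--Schwarz against the probability measure $\mu$ show that $\nu$ has an $L^2$-density $\phi$ with respect to $\mu_{xG^+}$ with $\|\phi\|_{L^2(\mu_{xG^+})}\ll_\Gamma 1$ (the constant involving $\|\theta\|_{L^2(G^+)}$ and $\vol(xG^+)\le\vol(X)$, using that there are only finitely many $G^+$-orbits in $X$). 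Write $\phi=\mathbf 1+\phi_0$ with $\phi_0\in L^2_0(xG^+)$, so $\|\phi_0\|_{L^2}\ll_\Gamma 1$. Let $\beta_\rho$ be normalized Haar measure on $B_\rho\subset G^+$ and $\nu_\rho=\beta_\rho\ast\nu$, with density $\mathbf 1+\beta_\rho\ast\phi_0$. By property $(\tau)$, the $G^+$-representation on $L^2_0(xG^+)$ has a uniform spectral gap, and the standard deduction of an explicit matrix-coefficient decay rate from a spectral gap yields $\|\beta_\rho\|_{L^2_0\to L^2_0}\ll e^{-\kappa\rho}$ for some $\kappa>0$. Hence, for $f\in\compactsmooth(X)$ with $c_f=\mu_{xG^+}(f)$ and $f_0=f-c_f$, one has $\nu_\rho(f)-\mu_{xG^+}(f)=\langle\beta_\rho\ast\phi_0,\,f_0\rangle$ and therefore
\[
\bigl|\nu_\rho(f)-\mu_{xG^+}(f)\bigr|\le\|\beta_\rho\ast\phi_0\|_{L^2}\,\|f_0\|_{L^2}\ \ll_\Gamma\ e^{-\kappa\rho}\,\Sob_d(f),
\]
using $\|f_0\|_{L^2(\mu_{xG^+})}\ll\|f\|_\infty\ll\Sob_d(f)$ by \ref{item:sobineq}.

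Finally I would compare $\nu$ with $\nu_\rho$ and optimize. Since $\nu(f)-\nu_\rho(f)=\int_{G^+}\theta(h)\int_{B_\rho}\bigl(\mu^h(f)-\mu^{hg}(f)\bigr)\,\de g\,\de h$ and $h\in B_1$, $g\in B_\rho$ force $hg\in B_{\rho+1}$, the ball estimate from the first paragraph gives $|\nu(f)-\nu_\rho(f)|\ll_d\varepsilon\,e^{c_1 d\rho}\,\Sob_d(f)$. Combining the three bounds,
\[
\bigl|\mu(f)-\mu_{xG^+}(f)\bigr|\ \ll_{\Gamma,d}\ \bigl(\varepsilon\,e^{c_1 d\rho}+e^{-\kappa\rho}\bigr)\,\Sob_d(f).
\]
Choosing $\rho=\tfrac{\log(1/\varepsilon)}{\kappa+c_1 d}$ balances the two terms, giving $|\mu(f)-\mu_{xG^+}(f)|\ll_{\Gamma,d}\varepsilon^{\kappa/(\kappa+c_1 d)}\Sob_d(f)$; since $\varepsilon<1$ and $d\ge 1$, the exponent is at least $\kappaalminvG/d$ with $\kappaalminvG:=\kappa/(\kappa+c_1)$, which proves the proposition.

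The main obstacle is the quantitative mixing bound $\|\beta_\rho\|_{L^2_0(xG^+)\to L^2_0(xG^+)}\ll e^{-\kappa\rho}$: it packages the uniform spectral gap of congruence quotients and the (standard but not entirely trivial) passage from a spectral gap to an explicit exponential decay rate of matrix coefficients for semisimple Lie groups. A secondary point is ensuring compact factors of $G^+$ do not interfere with this decay; this is harmless, as $\mu$ is in particular almost invariant under the Lie algebras of such factors, so the compact directions of the ball average may be discarded at essentially no cost. The bookkeeping in the first step is routine, but keeping the exponent there linear in $d$ via \ref{item:sobcont} is exactly what produces the factor $1/d$ in the final exponent.
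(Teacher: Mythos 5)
Your overall architecture — upgrade almost invariance to balls via telescoping and \ref{item:sobcont}, regularize $\mu$ to an $L^2$ density, apply the spectral gap to ball averages, then balance the two error terms — is the right one and, as far as I can tell, matches the route taken in \cite[Prop.~15.1]{emv} (which the paper simply cites). The telescoping step and the final optimization are correct and keep the exponent linear in $d$ as required.

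The genuine gap is the claim that $\|\phi\|_{L^2(\mu_{xG^+})}\ll_\Gamma 1$. Running the unfolding you describe gives, after Cauchy--Schwarz against $\mu$, a bound of the shape
\[
\|\phi\|_{L^2(\mu_{xG^+})}^2\ \ll_\Gamma\ \|\theta\|_{L^2(G^+)}^2\int_{xG^+} \#\{\gamma\in\Lambda:\ \|h^{-1}\gamma h\|\le 2\}\,\de\mu(\Lambda h),
\]
and when $X$ is noncompact the integrand is not bounded: the number of lattice elements returning a point of height $R$ to a bounded neighborhood grows polynomially in $R$ (inverse injectivity radius to a power), so the right-hand side is controlled by $\int \height^{\star}\,\de\mu$, which is \emph{not} a priori $O(1)$. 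Your parenthetical remark about $\vol(xG^+)\le\vol(X)$ and finitely many $G^+$-orbits does not touch this; the problem is concentration of $\mu$ high in the cusp, not the covolume. This matters for the final optimization: if $\|\phi_0\|_{L^2}$ is allowed to grow like a power of $\varepsilon^{-1}$, the exponent you extract can degenerate to zero or become negative, so the gap is not merely cosmetic.

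To close it one needs an extra input that your write-up omits, and this is precisely the step where the height weight $(1+\height)^d$ built into $\Sob_d$ earns its keep: one should either (a) split $f$ into a part supported on a compact piece $X_\eta$ and a cusp part, use \ref{item:sobineq} together with the $(1+\height)^d$ weight to show the cusp part of $f$ (and hence its $\mu$- and $\mu_{xG^+}$-integrals) is $\ll \eta^{\star(d-\kappaSobineq)}\Sob_d(f)$, and then run your $L^2$ argument only on $X_\eta$ where the density is controlled by $\eta^{-\star}$; or (b) first show that $\varepsilon$-almost invariance forces a quantitative non-concentration of $\mu$ in the cusp (e.g.\ via a Dani--Margulis type non-divergence estimate applied to translates $\mu^{u_t}$), yielding a bound $\|\phi\|_{L^2}\ll \varepsilon^{-\delta}$ for small $\delta$, which is then absorbed into the exponent. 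Either route requires a real argument that is missing from your proposal; without it the $L^2$-density bound, and hence the rest of the proof, does not go through on noncompact $X$.

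A secondary, smaller point: your remark about discarding compact factors of $G^+$ is plausible but too brief — the spectral-gap input is Clozel's property $(\tau)$ for the noncompact semisimple part, and one should say explicitly that the ball average may be taken there, with the compact directions handled trivially since any probability measure is almost invariant under small compact translations at cost $O(\varepsilon)$.
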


\begin{proof}[Proof of Theorem~\ref{thm:main}]
We may assume that
\begin{align*}
\mathrm{C}\,\minht(xH) \leq \minobs(xH)^{\frac{1}{2\Adiocond}}
\end{align*}
for some large constant $\mathrm{C}>0$ where the notation $\minobs(xH)$ was introduced in \eqref{eq:ethdef}.
Indeed, the theorem follows easily from \ref{item:sobineq} otherwise.

We proceed recursively as outlined above. Start with $\Fs_0 = \Fh$ so that $\mu$ is $\varepsilon$-almost invariant under $\Fs_0$ for any $\varepsilon>0$ by definition (for any choice of Sobolev norm, say $\Sob_{d_0}$).
Let $\Fr$ be an undistorted $H$-invariant complement to $\Fh$.
Apply Proposition~\ref{thm:trans2} with
\begin{align*}
T = \minobs(xH)^{\frac{1}{2\Adiocond}}.
\end{align*}
Thus, there exists $d_1 \geq d_0$ and two $[k_0(d_0),\varepsilon^{-\beta(d_0)}]$-generic points $x_1,x_2$ (with respect to $\Sob_{d_1}$) with $x_2 = x_1 \exp(r)$ for some $r \in \Fr$ such that
\begin{align*}
T^{-\Atransres} \leq \norm{r} \leq T^{-1/\Atransres}.
\end{align*}
By Proposition~\ref{prop:add inv}, $\mu$ is $\ll T^{-\star}$-almost invariant under some unit vector $Z \in \Fr$ (with respect to $\Sob_{d_1}$).
Therefore, by Proposition~\ref{prop:effective generation} the measure $\mu$ is $ \varepsilon_1$-almost invariant under a Lie subalgebra $\Fs_{1} \supset \Fh$ (with respect to $\Sob_{d_1}$) of dimension $\dim(\Fs_1) > \dim(\Fh)$ for $\varepsilon_1 \ll T^{-\star}\ll \minobs(xH)^{-\star}$.

One can proceed in this manner recursively; we limit ourselves to the second step for notational simplicity.
Let $\Fr_1$ be an undistorted $H$-invariant complement to $\Fs_1$. Let $T >0$ be sufficiently large satisfying
\begin{align*}
T \leq \minobs(xH)^{\frac{1}{2\Adiocond}}.
\end{align*}
By Proposition~\ref{thm:trans2} there exists $d_2 \geq d_1$ and two $[k_0(d_1),\varepsilon_1^{-\beta(d_1)}]$-generic points $x_1,x_2$ (with respect to $\Sob_{d_2}$) with $x_2 = x_1 \exp(r)$ for some $r \in \Fr_1$ such that
\begin{align*}
T^{-\Atransres} \leq \norm{r} \leq T^{-1/\Atransres}.
\end{align*}
We choose $T$ to be a sufficiently small power of $\varepsilon_1^{-1}$ so that \eqref{eq:addinv-displcond} is satisfied.
By Proposition~\ref{prop:add inv}, $\mu$ is $\ll \varepsilon_1^{\star}$-almost invariant under some unit vector $Z\in\Fr_1$.
By Proposition~\ref{prop:effective generation} the measure $\mu$ is $ \varepsilon_2$-almost invariant under a Lie subalgebra $\Fs_{2} \supset \Fh$ of dimension $\dim(\Fs_2) > \dim(\Fs_1)$ for $\varepsilon_2 \ll \varepsilon_1^{\star} \ll \minobs(xH)^{-\star }$.
Continuing like this proves that $\mu$ is $\ll \minobs(xH)^{-\star}$-almost invariant under $\Fg$ and the theorem follows from Proposition~\ref{prop:alminv under G}.
\end{proof}
\section{Extensions}\label{sec:exhaustive}

A priori, the function $\minobs(xH)$ measuring the minimal complexity of all intermediate orbits for a periodic orbit $xH$ is hard to estimate (note that Theorem~\ref{thm:main} is equivalently phrased with a polynomial rate in $\minobs(xH)$). 
Indeed, it appears to require a classification of all subgroups of class $\classH$ containing $gHg^{-1}$ for $g \in G$ with $\Gamma gH$ periodic.
In the following, we aim to reduce the set of intermediate groups.

\begin{definition}\label{def:exhaustive}
Let $\BM$ be a $\Q$-subgroup of $\G$ and let $\mathrm{A}>1$, $\mathrm{C}>1$.
We say that a subcollection $\classH' \subset \classH$ is $(\mathrm{A},\mathrm{C})$-exhaustive for $\BM$ if the following property holds: 
whenever $\BM$ is contained in a proper $\Q$-subgroup $\BL \in \classH$ of $\G$ with $\height(\BL) \leq T$ then $\BM$ is also contained in a proper $\Q$-subgroup $\BL' \in \classH'$ with $\height(\BL') \leq \mathrm{C}T^{\mathrm{A}}$.
A subcollection $\classH' \subset \classH$ is exhaustive for $\BM$ if it is $(\mathrm{A},\mathrm{C})$-exhaustive for some $\mathrm{A}>1$, $\mathrm{C}>1$.
\end{definition}

We shall give concrete examples of exhaustive subcollections in \S\ref{sec:exhaustive} below.
If $xH$ is a periodic orbit in $X$, the Borel-Wang Theorem determines a $\Gamma$-conjugacy class of semisimple $\Q$-groups: the $\Gamma$-conjugacy class of the Zariski-closure of $gHg^{-1}$ when $x = \Gamma g$.
We say that a subcollection  $\mathcal{H}'\subset \mathcal{H}$ is $(\mathrm{A},\mathrm{C})$-exhaustive for $xH$ if it is $(\mathrm{A},\mathrm{C})$-exhaustive for any element of that conjugacy class.
Lastly, we define in analogy to \eqref{eq:ethdef}
\begin{align*}
\minobs_{\classH'}(xH) = \inf\{\disc(\Gamma \BM(\R) g): \G \neq \mathbf{M} \in \classH' \text{ with } xH \subset \Gamma \BM(\R) g\}.
\end{align*}

The following theorem extends Theorem~\ref{thm:main}.

\begin{theorem}\label{thm:exhaustive}
There exists $d \geq 1$ depending on $G$ and $H$ with the following properties.
Let $xH = \Gamma g H$ be a periodic orbit and let $\classH'\subset \classH$ be an $(\mathrm{A},\mathrm{C})$-exhaustive $\Gamma$-invariant subcollection for $xH$.
Then there exists $\delta>0$ depending only on $\mathrm{A},G,H$ such that
\begin{align*}
\Big| \int_{xH} f \de \mu_{xH} - \int_{xG^+} f \de \mu_{xG^+}\Big| 
\ll \minobs_{\classH'}(xH)^{-\delta}\minht(xH) \Sob_d(f)
\end{align*}
for all $f \in C_c^\infty(X)$.
Here, the implicit constant depends on $G$, $H$, $\Gamma$, and $\mathrm{C}$.
\end{theorem}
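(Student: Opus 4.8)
The plan is to deduce Theorem~\ref{thm:exhaustive} from Theorem~\ref{thm:main} by showing that the hypothesis of Theorem~\ref{thm:main}, namely a lower bound on $\disc(\Gamma\BM(\R)g)$ for \emph{all} proper intermediate $\BM\in\classH$, can be recovered from a lower bound on the discriminants only over the exhaustive subcollection $\classH'$, at the cost of a polynomial loss in the exponent. Concretely, write $D' = \minobs_{\classH'}(xH)$. First I would observe that for any proper $\Q$-subgroup $\BM\in\classH$ with $xH\subset\Gamma\BM(\R)g$, the discriminant $\disc(\Gamma\BM(\R)g) = \norm{\wedgeorbit{\BM}(g)}$ controls the height $\height(\BM) = \norm{\wedgevec{\BM}}$ up to a polynomial factor in $\abs{g}$: since $\wedgeorbit{\BM}(g) = \wedgerep{\BM}(g^{-1}).\wedgevec{\BM}$ and $\wedgerep{\BM}$ is a fixed algebraic (exterior-power) representation, one has $\height(\BM) = \norm{\wedgerep{\BM}(g).\wedgeorbit{\BM}(g)} \ll \abs{g}^{\star}\disc(\Gamma\BM(\R)g)$, and conversely $\disc(\Gamma\BM(\R)g)\ll\abs{g}^{\star}\height(\BM)$. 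Using \eqref{eq:diameter} we may choose $g$ with $\abs{g}\ll\minht(xH)^{\Adiam}$, so both comparisons carry only a polynomial factor in $\minht(xH)$.

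Next, suppose towards applying Theorem~\ref{thm:main} that some proper $\BM\in\classH$ with $xH\subset\Gamma\BM(\R)g$ has small discriminant, say $\disc(\Gamma\BM(\R)g) < D$ for a parameter $D$ to be chosen. Then $\height(\BM)\ll\minht(xH)^{\star}D =: T$. Applying the Borel--Wang Theorem as in Remark~\ref{rem:exptheta}, the Zariski closure $\mathbf{H}_0$ of $gHg^{-1}$ is a $\Q$-group in the distinguished $\Gamma$-conjugacy class, and $g^{-1}\mathbf{H}_0 g$ fixes the same vectors that $H$ does, so $\mathbf{H}_0\subset\BM'$ where $\BM' = g\BM g^{-1}$ viewed appropriately; more carefully, $xH\subset\Gamma\BM(\R)g$ forces $g\mathbf{H}_0g^{-1}\subset\BM(\R)$ up to the relevant identifications, i.e.\ $\mathbf{H}_0$ is contained in the proper $\Q$-subgroup $\BM$ (after conjugating by the $\Gamma$-element realizing periodicity). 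Since $\height(\BM)\leq T$ and $\classH'$ is $(\mathrm{A},\mathrm{C})$-exhaustive for $xH$ (hence for $\mathbf{H}_0$), Definition~\ref{def:exhaustive} gives a proper $\BL'\in\classH'$ with $\mathbf{H}_0\subset\BL'$ and $\height(\BL')\leq\mathrm{C}T^{\mathrm{A}}$. Because $\mathbf{H}_0\subset\BL'$, the orbit $\Gamma\BL'(\R)g$ contains $xH$, and by the comparison above $\disc(\Gamma\BL'(\R)g)\ll\minht(xH)^{\star}\height(\BL')\ll\mathrm{C}\,\minht(xH)^{\star}T^{\mathrm{A}}\ll\mathrm{C}\,\minht(xH)^{\star}D^{\mathrm{A}}$. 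Hence $D'\ll\mathrm{C}\,\minht(xH)^{\star}D^{\mathrm{A}}$, which rearranged gives $D\gg(D'/(\mathrm{C}\minht(xH)^{\star}))^{1/\mathrm{A}}$.

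Putting this together: given the actual value $D' = \minobs_{\classH'}(xH)$, set $D = c\,(D'/\minht(xH)^{\star})^{1/\mathrm{A}}$ for a suitable small constant $c$ depending on $\mathrm{C}$ (and $G,H$). By the contrapositive of the previous paragraph, every proper $\BM\in\classH$ with $xH\subset\Gamma\BM(\R)g$ satisfies $\disc(\Gamma\BM(\R)g)\geq D$, so \eqref{eq:diophantineassump thm} holds with this $D$. Theorem~\ref{thm:main} then yields the bound $\ll_{G,H,\Gamma} D^{-\delta_0}\minht(xH)\Sob_d(f)$ for the $\delta_0>0$ of Theorem~\ref{thm:main}; substituting $D$ gives $\ll D'^{-\delta_0/\mathrm{A}}\minht(xH)^{1+\star\delta_0/\mathrm{A}}\Sob_d(f)$, and absorbing the extra power of $\minht(xH)$ into $\minht(xH)$ by further shrinking the exponent (using that $\minht(xH)\ll D'^{\star}$ after the reduction $\mathrm{C}\minht(xH)\leq\minobs(xH)^{1/(2\Adiocond)}\leq\minobs_{\classH'}(xH)^{1/(2\Adiocond)}$ exactly as at the start of the proof of Theorem~\ref{thm:main}, the complementary case being trivial by \ref{item:sobineq}) we obtain the claimed bound with $\delta = \delta_0/(2\mathrm{A})$, say, depending only on $\mathrm{A},G,H$, and implicit constant depending on $G,H,\Gamma,\mathrm{C}$.

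The main obstacle I anticipate is the bookkeeping around the Borel--Wang step: one must be careful that "$xH\subset\Gamma\BM(\R)g$'' for $\BM\in\classH$ genuinely translates into "the distinguished semisimple $\Q$-group in the Borel--Wang conjugacy class of $xH$ is contained in $\BM$ (up to $\Gamma$-conjugacy)'', so that the exhaustiveness hypothesis, which is phrased for $\Q$-subgroups containing that distinguished group, actually applies. This is where the $\Gamma$-invariance of $\classH'$ and the definition of exhaustiveness "for $xH$'' (for every element of the conjugacy class) are used. The remaining ingredients — the two-sided polynomial comparison between discriminant and height, the choice of a bounded-size representative via \eqref{eq:diameter}, and the final exponent arithmetic — are routine.
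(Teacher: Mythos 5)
Your proposal is correct and follows essentially the same route as the paper: both reduce to Theorem~\ref{thm:main} by using the discriminant--height comparison (via \eqref{eq:diameter}) together with the exhaustiveness definition to show $\minobs_{\classH'}(xH) \ll_{\mathrm{C}} \minht(xH)^\star\,\minobs(xH)^\star$, then absorb the extra powers of $\minht(xH)$ after the trivial reduction to the case $\minht(xH)\leq\minobs_{\classH'}(xH)^{\delta'}$. The paper's proof is terser and skips the Borel--Wang bookkeeping you spell out, but that care is exactly what justifies invoking exhaustiveness, so your extra detail is warranted rather than a deviation.
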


\begin{proof}
By Definition~\ref{def:exhaustive}, we have
\begin{align*}
\minobs(xH) \leq \minobs_{\classH'}(xH) \ll_{\mathrm{C}} \minht(xH)^\star \minobs(xH)^\star.
\end{align*}
We may assume that $\minht(xH) \leq \theta_{\mathcal{H}'}(xH)^{\delta'}$ for a fixed $\delta'>0$ (else the theorem follows from \ref{item:sobineq}).
Thus, 
\begin{align*}
\Big| \int_{xH} f \de \mu_{xH} - \int_{xG^+} f \de \mu_{xG^+}\Big| 
&\ll \minobs_{\classH}(xH)^{-\delta}\minht(xH) \Sob_d(f)\\
&\ll \minobs_{\classH'}(xH)^{-\star\delta}\minht(xH)^{\star}\Sob_d(f)
\end{align*}
implies Theorem~\ref{thm:exhaustive} by choosing $\delta'>0$ sufficiently small.
%
\end{proof}

\subsection{Examples of exhaustive subcollections}\label{sec:examples-exhaustive}
In this subsection we shall give a few examples of exhaustive subcollections with a focus on the setup in Theorem~\ref{thm:effequi orth}.

\subsubsection{General examples}
Given any $\Q$-subgroup $\BM < \G$ we write $\BN_\BM$ for the normalizer subgroup and $\BM^{\mathcal{H}}$ for the largest subgroup of $\BM$ which has class $\classH$.
Then
\begin{align*}
\height(\BN_\BM),\height(\BM^{\classH}) \ll \height(\BM)^\star;
\end{align*}
see \cite[\S4]{effectiveavoidance}.
The following lemma is elementary.

\begin{lemma}\label{lem:exhaustiveexp}
For any nonnormal subgroup $\BH \in \classH$ of $\G$ there exists a connected nonnormal $\Q$-subgroup $\BM \subset \G$ containing $\BH$ with $\height(\BH) \ll \height(\BM)^\star$ and $\BN_\BM^ \circ = \BM$ so that one of the following is true:
\begin{enumerate}[(i)]
\item $\BM$ is a parabolic $\Q$-group.
\item $\BM$ is the centralizer of a $\Q$-torus.
\item $\BM$ is semisimple with finite centralizer.
\end{enumerate}
In particular, the collection of subgroups $\BM^{\classH}$ for $\BM$ as in (i), (ii), or (iii) together with all normal subgroups of $\G$ is an exhaustive subcollection for $\BH$.
\end{lemma}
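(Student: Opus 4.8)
The plan is to produce the subgroup $\BM$ by a structural dichotomy on the normalizer of $\BH$, and then verify that the resulting family is exhaustive almost by definition. First I would replace $\BH$ by the connected component $\BM_0 = \BN_\BH^\circ$ of its normalizer; by the cited height bounds from \cite[\S4]{effectiveavoidance} we have $\height(\BM_0) \ll \height(\BH)^\star$, and clearly $\BM_0 \supset \BH$ and $\BM_0$ is nonnormal (its normalizer contains that of $\BH$, and if $\BM_0$ were normal so would be the characteristic subgroup $\BH$ of it — wait, $\BH$ need not be characteristic, so one instead argues: the Zariski closure of the normal closure of $\BH$ is a proper normal subgroup containing $\BH$, contradicting our standing assumption). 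Iterating $\BM \mapsto \BN_\BM^\circ$ stabilizes after boundedly many steps (dimension strictly increases or we stop), so we may and do assume $\BN_\BM^\circ = \BM$, i.e.\ $\BM$ is self-normalizing up to components, at the cost of only a $\star$-power loss in height.

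Next I would run the standard structure theory for a connected $\Q$-subgroup $\BM < \G$ with $\BN_\BM^\circ = \BM$. Write $\BM = \BL \ltimes \BR_u$ with $\BR_u$ the unipotent radical and $\BL$ a Levi $\Q$-subgroup. If $\BR_u \neq 1$, then $\BM$ is contained in the parabolic $\Q$-subgroup $\BP = \BN_\G(\BR_u)^\circ$ associated to $\BR_u$ (this is a parabolic because $\BR_u$ is a unipotent subgroup normalized by $\BM$, and its normalizer in $\G$ is parabolic; alternatively the parabolic is obtained as $\mathrm{Stab}_\G$ of the appropriate flag). Since $\BM$ self-normalizing forces $\BM = \BP$ in the degenerate case, and in general $\height(\BP) \ll \height(\BM)^\star$ by height bounds for normalizers, we land in case (i) after replacing $\BM$ by $\BP$. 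If $\BR_u = 1$, then $\BM = \BL$ is reductive; decompose $\BL = \BZ^\circ \cdot \BL_{\mathrm{der}}$ with $\BZ$ the central torus. If $\BZ$ is noncentral in $\G$ (i.e.\ $\dim \BZ > 0$ with $\BZ$ not contained in the center of $\G$), then the centralizer $\BC_\G(\BZ)^\circ$ is a connected reductive nonnormal $\Q$-subgroup containing $\BM$, with $\height(\BC_\G(\BZ)^\circ) \ll \height(\BM)^\star$, and it equals its own normalizer-component — this is case (ii). Otherwise $\BZ$ is central in $\G$, so $\BM = \BZ^\circ \cdot \BL_{\mathrm{der}}$ with $\BL_{\mathrm{der}}$ semisimple; then $\BC_\G(\BL_{\mathrm{der}})^\circ$ contains only the central torus plus possible other semisimple factors, and the self-normalizing hypothesis on $\BM$ forces $\BM$ to be the almost-direct product of $\BL_{\mathrm{der}}$ with $\BZ^\circ(\G)$, i.e.\ $\BM$ is semisimple modulo center with finite centralizer in $\G/\BZ(\G)$ — this is case (iii) (after absorbing the central torus, which does not affect being of class $\classH$ after passing to $\BM^{\classH}$).

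Finally, for the exhaustiveness conclusion: let $\BL' \in \classH$ be a proper $\Q$-subgroup of $\G$ with $\BH \subset \BL'$ and $\height(\BL') \leq T$. Apply the first two paragraphs to $\BL'$ in place of $\BH$ (note $\BL'$ nonnormal unless it is one of the finitely many normal subgroups of $\G$, which are included in the stated collection): we obtain $\BM$ of type (i), (ii), or (iii) with $\BL' \subset \BM$, hence $\BH \subset \BM$, and $\height(\BM) \ll \height(\BL')^\star \leq \mathrm{C} T^{\mathrm{A}}$ for suitable absolute $\mathrm{A}, \mathrm{C}$. Since $\BH \in \classH$ and $\BH \subset \BM$, in fact $\BH \subset \BM^{\classH}$, the largest class-$\classH$ subgroup of $\BM$, and $\height(\BM^{\classH}) \ll \height(\BM)^\star$ by the cited bound. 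Thus $\BM^{\classH}$ is a member of the stated collection, properly contained in $\G$ (as $\BM \subsetneq \G$), containing $\BH$, of height $\ll T^\star$; this is exactly the exhaustiveness property.

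\medskip

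The main obstacle I anticipate is making the self-normalizing reduction and the case split genuinely \emph{effective} in height — i.e.\ ensuring each passage ($\BH \rightsquigarrow \BN_\BH^\circ$, Levi decomposition, passage to $\BC_\G(\BZ)^\circ$ or to the associated parabolic) only costs a bounded power of the height, uniformly in $\BH$. Each individual step is covered by the height estimates for normalizers, centralizers of tori, and $\BM^{\classH}$ recorded from \cite[\S4]{effectiveavoidance}, but chaining boundedly many of them and checking that the intermediate groups one writes down (the parabolic attached to a unipotent radical, the Levi, the central torus) have heights polynomially controlled by $\height(\BM)$ requires some care; the number of iterations is bounded by $\dim \G$, so the total exponent remains absolute. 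The purely group-theoretic content — that a connected self-normalizing $\Q$-subgroup is parabolic, or the centralizer of a central torus, or reductive-with-finite-centralizer — is classical (Borel--Tits) and not the difficulty.
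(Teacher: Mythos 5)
Your proposal follows essentially the same structural strategy as the paper: iterate normalizer-type operations to reach a stable (self-normalizing) group, then split according to whether the unipotent radical is nontrivial (parabolic case), whether the central torus is nontrivial (centralizer-of-a-torus case), or neither (semisimple with finite centralizer). The reductive branch is handled the same way in both arguments.

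However, there is a genuine gap in your treatment of case (i). You claim that if $\BM$ is self-normalizing with $\BR_u(\BM) \neq 1$, then $\BP = \BN_\G(\BR_u(\BM))^\circ$ is parabolic, because ``the normalizer of a unipotent subgroup in $\G$ is parabolic.'' That is not what Borel--Tits gives: the theorem says $\BN_\G(\BU)$ is \emph{contained in} a canonical parabolic $\BP(\BU)$ with $\BU \subset \BR_u(\BP(\BU))$, and in general $\BN_\G(\BU)^\circ \subsetneq \BP(\BU)$. Concretely, take $\BU$ to be a regular unipotent one-parameter subgroup in $\SL_3$; then $\BN_{\SL_3}(\BU)^\circ$ is a three-dimensional solvable group strictly inside the Borel, not parabolic. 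Worse, applying $\BN_\G(\BR_u(\cdot))^\circ$ once more does \emph{not} immediately land you in a parabolic: in this example the second iterate is still a proper (four-dimensional) subgroup of the Borel. The paper avoids this by iterating $\BM_j = \BN_{\BU_{\BM_{j-1}}}^\circ$ until stagnation and \emph{only then} invoking Borel--Tits to conclude the limit is parabolic; the height loss per step is polynomial and the number of steps is bounded by $\dim\G$, so the total exponent stays absolute. Your plan can be repaired by running the same iteration, and you do gesture at this in your closing caveat, but as written the one-step parabolicity claim is false.

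A second, smaller issue: to justify that $\BN_\BH^\circ$ is nonnormal you invoke the paper's standing assumption that $H$ lies in no proper normal $\Q$-subgroup. That assumption is about the fixed acting group $H$, not about an arbitrary $\BH \in \classH$ as in the lemma statement; the lemma must also hold, say, for an $\BH$ contained in a proper normal factor. The paper instead argues purely from nonnormality of $\BH$: if some $\BM_j$ in the iteration were normal, a backwards induction (using that the complementary product of simple factors would then normalize $\BM_{j-1}$ and hence lie in $\BM_j$) forces each $\BM_i$ down to $\BM_0 = \BH$ to be a product of simple factors, hence normal, contradicting the hypothesis. You should replace your normality argument with something along these lines.
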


We remark that a centralizer of a $\Q$-torus $\BT$ is always a Levi subgroup of a parabolic subgroup defined over $\overline{\Q}$ but in general such a parabolic subgroup cannot be chosen to be defined over $\Q$.

\begin{proof}
Suppose first that the unipotent radical $\BU_\BH$ of $\BH$ is non-trivial. We define $\BM_0 = \BH$ and inductively $\BM_{j} = \BN_{\BU_{\BM_{j-1}}}^\circ$ for $j \geq 1$. We obtain an increasing sequence of connected $\Q$-subgroups
\begin{align*}
\BH = \BM_0 \subset \BM_1 \subset \BM_2 \subset \ldots
\end{align*}
The sequence needs to stagnate after at most $\dim(\G)$ steps and hence we obtain a connected $\Q$-subgroup $\BM$ containing $\BH$ with  $\height(\BH) \ll \height(\BM)^\star$ and $\BM = \BN_{\BU_{\BM}}^\circ$.
Such a subgroup must be parabolic \cite{BorelTits-unipotents}.

Suppose now that $\BU_\BH$ is trivial and consider the nested sequence defined by $\BM_0 = \BH$ and $\BM_j = \BN_{\BM_{j-1}}^\circ$. If $\BM_j$ has non-trivial unipotent radical for some $j$, one can apply the previous step\footnote{Alternatively, one can use the fact that the centralizer of a reductive $\Q$-group is always reductive.
}.
Also, note that $\BM_j$ is never a normal subgroup of $\G$. Indeed, otherwise $\BM_i$ is a product of some of the simple factors of $\G$ for every $i \leq j$ by backwards induction contradicting the non-normality assumption on $ \BH$.
If $\BM_j$ has trivial unipotent radical for every $j$, there is a connected reductive $\Q$-group $\BM$ containing $\BH$ with $\height(\BM) \ll \height(\BH)^\star$ and with $\BM = \wholenorm{\BM}^\circ$.
If the center $\BZ$ of $\BM$ is finite, we are done.
Otherwise, the centralizer\footnote{Note that $\BM'$ is not necessarily equal to $\BM$. For example, take $\BM$ to be the product of the block-diagonally embedded copy of $\SL_2$ in $\SL_4$ with its centralizer.} $\BM'$ of $\BZ$ contains $\BM$ and satisfies $\height(\BM') \ll \height(\BH)^\star$.
This concludes the lemma.
\end{proof}

%
%

When $\G$ is given as product of $\Q$-almost simple groups, one could wish for an exhaustive subcollection of subgroups in product form. Theorem~\ref{thm:exhaustive} in this case is particularly useful when trying to establish disjointness. For simplicity, we only state the following lemma for two factors; as it is not used in the remainder of the article, we omit the proof.

\begin{lemma}
Suppose that $\G = \G_1 \times \G_2$ where $\G_1, \G_2$ are $\Q$-almost simple simply connected $\Q$-groups. Consider the subcollection $\classH' \subset \classH$ of $\Q$-groups $\BM< \G$ of one of the following two forms:
\begin{itemize}
\item $\BM = \BM_1 \times \G_2$ for $\BM_1 < \G_1$ or $\BM = \G_1 \times \BM_2$ for $\BM_2 < \G_2$.
\item $\BM$ is the graph of an isomorphism $\G_1 \to \G_2$.
\end{itemize}
Then $\classH'$ is exhaustive for any $\BH < \G$ of class $\classH$.
\end{lemma}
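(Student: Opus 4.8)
The plan is to verify the defining condition of Definition~\ref{def:exhaustive} directly, with exponent $\mathrm{A}=2$ (say) and constant $\mathrm{C}$ depending only on $\G$ together with the fixed integral structure. So fix $\BH\in\classH$ and suppose $\BH\subseteq\BL\subsetneq\G$ for some $\BL\in\classH$; I must produce a proper $\BL'\in\classH'$ with $\BH\subseteq\BL'$ and $\height(\BL')\ll_\G\height(\BL)$. Write $\pi_i\colon\G\to\G_i$ ($i=1,2$) for the coordinate projections, which are defined over $\Q$, and set $\mathfrak{l}=\mathrm{Lie}(\BL)$ and $\mathfrak{g}_i=\mathrm{Lie}(\G_i)$. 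First I record that each $\pi_i(\BL)$ is again a connected $\Q$-subgroup of class $\classH$: it is connected as a continuous image of a connected group, and its radical, being the image of the (unipotent) radical of $\BL$, is unipotent. I then split according to whether some projection of $\BL$ is proper.

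\textbf{Case 1: $\pi_1(\BL)\subsetneq\G_1$, or symmetrically $\pi_2(\BL)\subsetneq\G_2$.} Here I take $\BL':=\pi_1(\BL)\times\G_2$. This is a proper $\Q$-subgroup of $\G$, it has class $\classH$ (a product of class-$\classH$ groups), it is of the first form in the statement, and it contains $\BL\supseteq\BH$ since $\BL\subseteq\pi_1(\BL)\times\pi_2(\BL)$. For the height I observe that $\mathrm{Lie}(\BL')=\pi_1(\mathfrak{l})\oplus\mathfrak{g}_2=\mathfrak{l}+\mathfrak{g}_2$ as subspaces of $\Fg$, so that by the standard submultiplicativity of heights of rational subspaces under sums (of the shape used in \cite[\S4]{effectiveavoidance}) one gets $\height(\BL')=\height(\mathfrak{l}+\mathfrak{g}_2)\ll\height(\mathfrak{l})\,\height(\mathfrak{g}_2)\ll_\G\height(\BL)$, since $\height(\mathfrak{g}_2)$ is an absolute constant.

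\textbf{Case 2: $\pi_1(\BL)=\G_1$ and $\pi_2(\BL)=\G_2$.} Since $\BL\ne\G$, a Goursat-type analysis shows $\BL$ is already a graph. Concretely, the kernels $\mathbf{K}_i:=\ker(\pi_i|_{\BL})=\BL\cap\ker\pi_i$ are normal in $\BL$, and $\pi_{3-i}$ maps $\mathbf{K}_i$ isomorphically onto a subgroup of $\G_{3-i}$ normalized by $\pi_{3-i}(\BL)=\G_{3-i}$, hence onto a normal subgroup of $\G_{3-i}$. As $\G_{3-i}$ is $\Q$-almost simple, $\mathbf{K}_i$ is either finite, or it equals $\{e\}\times\G_{3-i}$ which would force $\BL=\G$; the latter is excluded, so both $\mathbf{K}_i$ are finite. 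Therefore $\pi_i|_{\BL}\colon\BL\to\G_i$ is a surjective morphism of connected $\Q$-groups with finite kernel, i.e.\ a central $\Q$-isogeny onto the simply connected group $\G_i$, and hence an isomorphism. It follows that $\BL=\{(g,\psi(g)):g\in\G_1\}$ is the graph of the $\Q$-isomorphism $\psi:=\pi_2\circ(\pi_1|_{\BL})^{-1}\colon\G_1\to\G_2$, so $\BL$ lies in $\classH'$ already and I may take $\BL'=\BL$.

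Together these two cases give the lemma. The only quantitative ingredient is the height inequality $\height(V+W)\ll\height(V)\,\height(W)$ for rational subspaces used in Case~1; everything else is soft structure theory. I expect the step requiring the most care to be the bookkeeping in Case~2: arguing that the kernels $\mathbf{K}_i$ are genuinely finite (this is exactly where $\BL\subsetneq\G$ and the $\Q$-almost simplicity of the factors are used) and that the resulting $\psi$ is an honest isomorphism rather than merely an isogeny (which is where simple connectedness of $\G_2$ enters, and is also why the `graph' form in $\classH'$ cannot be dispensed with). A minor additional check, not part of the statement but relevant for applying Theorem~\ref{thm:exhaustive}, is that the collection $\classH'$ is stable under $\G(\Q)$-conjugation and hence in particular $\Gamma$-invariant.
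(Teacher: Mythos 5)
Your proof is correct. Note that the paper explicitly states ``as it is not used in the remainder of the article, we omit the proof,'' so there is no published argument to compare against; you are supplying a proof the authors chose not to write out.

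That said, your two-case structure is the natural one and everything checks out. A few points worth highlighting for the record. In Case~1, the identity $\mathrm{Lie}(\pi_1(\BL)\times\G_2)=\pi_1(\mathfrak{l})\oplus\mathfrak{g}_2=\mathfrak{l}+\mathfrak{g}_2$ is correct, and the bound $\height(\mathfrak{l}+\mathfrak{g}_2)\ll\height(\mathfrak{l})\height(\mathfrak{g}_2)$ is an instance of Schmidt's inequality $H(V+W)H(V\cap W)\leq H(V)H(W)$ (the paper's height is comparable to the Schmidt height up to a constant depending only on $\dim\G$). One also needs, as you note, that $\pi_1(\BL)$ has class $\classH$; this follows since the image of the radical of $\BL$ under the surjection $\pi_1|_{\BL}$ is the radical of $\pi_1(\BL)$, and images of unipotent groups are unipotent. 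In Case~2, the Goursat analysis is the key: the kernels $\mathbf{K}_i$ project to normal $\Q$-subgroups of the opposite factor, and $\Q$-almost simplicity forces them to be finite (the alternative $\mathbf{K}_1=\{e\}\times\G_2$ combined with $\pi_1(\BL)=\G_1$ yields $\BL=\G$, as you verify). Being finite normal in the connected group $\BL$, each $\mathbf{K}_i$ is central, so each $\pi_i|_{\BL}$ is a central isogeny onto the simply connected $\G_i$ and hence an isomorphism; this is precisely where simple connectedness is essential, since otherwise one would only get an isogeny and $\BL$ might be the graph of a proper isogeny rather than of an isomorphism. Your closing remark that $\classH'$ is $\G(\Q)$-conjugation stable (so a fortiori $\Gamma$-invariant) is also correct and indeed necessary for Theorem~\ref{thm:exhaustive} to apply.
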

%
%

\subsubsection{Special orthogonal groups}

The aim of this section is to determine an exhaustive subcollection used in the proof of Theorem~\ref{thm:effequi orth}. 
For that purpose, let $Q$ be a nondegenerate\footnote{Here, $Q$ is non-degenerate if there is no non-zero $v \in \Q^n$ with $(v,w)_Q = 0$ for all $w \in \Q^n$.} rational quadratic form on $\Q^n$ for $n \geq 4$ and let $\G = \SO_Q$.
For any subset $\mathcal{B}\subset \Q^n$ we set 
\begin{align*}
\BM_{\mathcal{B}} = \{g\in \SO_Q: g.w = w \text{ for all }w \in \mathcal{B}\}.
\end{align*}
Whenever $L \subsetneq \Q^n$ is a subspace with $Q|_L$ non-degenerate, the group $\BM_L\simeq \SO_{Q|_L}$ is semisimple if $\dim(L)\leq n-3$ and is a torus if $\dim(L) = n-2$.
It is a maximal (connected) subgroup if and only if $\dim(L)=1$ (this goes back to Dynkin \cite{Dynkin1,Dynkin2}).

\begin{proposition}[Pointwise stabilizer subcollections]\label{prop:exhaustive orth}
Let $L \subset \Q^n$ be a subspace of dimension at most $n-3$. Assume that $Q|_L$ is positive definite over $\R$.
The subcollection
\begin{align*}
\mathcal{H}_L' = \{\BM_v: 0 \neq v \in L\}
\end{align*}
is $(\mathrm{A},\mathrm{C})$-exhaustive for $\BM_L$ for some $(\mathrm{A},\mathrm{C})$ depending only on $n$.
\end{proposition}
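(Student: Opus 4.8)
The plan is to verify the defining property of exhaustiveness directly. First dispose of the degenerate cases: if $L=0$ then $\BM_L=\G$ and there is nothing to prove, and if $\dim L=1$ then all the $\BM_v$ with $0\neq v\in L$ coincide with $\BM_L$, so one may take $\BL'=\BM_L$. Assume therefore $2\le\dim L\le n-3$, fix a proper $\BL\in\classH$ with $\BM_L\subseteq\BL$ and $\height(\BL)\le T$, and write $L^{\perp}$ for the $Q$-orthogonal complement of $L$; then $\dim L^{\perp}\ge 3$, so $\BM_L\cong\SO_{Q|_{L^{\perp}}}$ is semisimple and acts absolutely irreducibly on $L^{\perp}$ with fixed space exactly $L$. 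Since for any $0\neq v\in L$ we automatically have $\BM_L\subseteq\BM_v$, the subgroup $\BM_v$ is proper, $Q(v)>0$ (so $\BM_v\cong\SO_{Q|_{v^{\perp}}}$ is semisimple, hence lies in $\classH$ and in $\mathcal H_L'$), and $\Lie(\BM_v)=\{X\in\mathfrak{so}_Q:Xv=0\}$ has height $\ll\|v\|^{\star}$, it suffices to produce some $0\neq v\in L$ with $\|v\|\ll T^{\star}$.

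The structural step is to show that $\BL$ must preserve a nonzero $\Q$-subspace contained in $L$. Any $\BL$-invariant subspace of $\Q^n$ is in particular $\BM_L$-invariant, and since $\Q^n=L\oplus L^{\perp}$ with $L$ the trivial and $L^{\perp}$ the (absolutely) irreducible $\BM_L$-isotypic component, every $\BM_L$-submodule of $\Q^n$ is contained in $L$ or contains $L^{\perp}$. Apply this to $M:=\langle\BL\cdot L^{\perp}\rangle$, so $M=W\oplus L^{\perp}$ with $W:=M\cap L$. Decomposing $\mathfrak{so}_Q\cong\wedge^{2}\Q^n=\wedge^{2}L\oplus(L\otimes L^{\perp})\oplus\wedge^{2}L^{\perp}$ as $\BM_L$-modules — with $\wedge^{2}L$ the full trivial isotypic part since $\dim L^{\perp}\ge 3$ — the subalgebra $\mathfrak l:=\Lie(\BL)\supseteq\wedge^{2}L^{\perp}$ splits compatibly, and the projection of $\mathfrak l$ to $L\otimes L^{\perp}$ equals $W\otimes L^{\perp}$ (using that the standard representation of $\SO_m$, $m\ge3$, is absolutely irreducible). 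A short bracket computation in $\wedge^{2}\Q^n$ then shows that if $W=L$ one can subtract off the $\wedge^{2}L^{\perp}$-component to get $L\otimes L^{\perp}\subseteq\mathfrak l$, after which bracket-closure forces $\wedge^{2}L\subseteq\mathfrak l$, i.e.\ $\mathfrak l=\mathfrak{so}_Q$, contradicting properness of $\BL$. Hence $W\subsetneq L$, and $L_1:=M^{\perp}\cap L$ is a nonzero $\Q$-subspace of $L$ preserved by $\BL$; the same relations show $\BL$ acts on $L_1$ through $\SO(L_1)$.

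For the quantitative extraction I would run a recursion on a $\BL$-invariant subspace $Y\subseteq L$, starting from $Y:=L_1$. Let $\mathfrak a_Y\subseteq\mathfrak{so}(Y)$ be the image of $\mathfrak l$ under restriction to $Y$. If $\mathfrak a_Y=0$ then $Y\subseteq\mathrm{Fix}(\BL)=\ker\big(\mathfrak l\curvearrowright\Q^n\big)$, which is cut out by a linear system in a basis of $\mathfrak l\cap\Fg(\Z)$ of vectors of norm $\ll\height(\BL)\le T$ (a standard consequence of Minkowski's theorem, since the rank-$\dim\BL$ lattice $\mathfrak l\cap\Fg(\Z)$ has primitive Plücker vector of norm $\height(\BL)$); hence $\mathrm{Fix}(\BL)$ contains $0\neq v$ with $\|v\|\ll T^{\star}$. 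If $\mathfrak a_Y\neq0$ but $\mathfrak a_Y\cdot Y\subsetneq Y$, replace $Y$ by $\mathfrak a_Y\cdot Y$ and recurse. Otherwise $\mathfrak a_Y\cdot Y=Y$; here $\dim Y\ge 3$ (for $\dim Y\le2$ a nonzero $\mathfrak a_Y$ would make $\BL$ surject onto a nontrivial torus, impossible since $\BL\in\classH$ has no nontrivial $\Q$-characters), $\BL$ preserves the orthogonal splitting $\Q^n=Y\oplus Y^{\perp}$, and one concludes either directly — when $\BL$ splits accordingly and effectivity of the factor decomposition of semisimple groups bounds the $Y$-factor's height by $\height(\BL)^{\star}$, so that $Y=\mathfrak a_Y\cdot\Q^n$ has a spanning set of size $\ll T^{\star}$ — or by recursion inside $\SO(Y^{\perp})$ applied to the smaller datum $\BM_L\subseteq\BL\cap\SO(Y^{\perp})$. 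In every case one obtains $0\neq v\in L$ with $\|v\|\ll T^{\star}$, hence $\BM_v\in\mathcal H_L'$ containing $\BM_L$ with $\height(\BM_v)\ll T^{\star}$, which is the desired $(\mathrm A,\mathrm C)$-exhaustiveness with exponents depending only on $n$.

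The main obstacle is precisely the height bookkeeping of this last step: a priori the height of $\BM_L$ (equivalently the arithmetic complexity of $L$) is \emph{not} controlled by $\height(\BL)$ in isolation, so one must genuinely use the structural position to see that the $\BL$-invariant subspace $Y\subseteq L$ inherits the complexity bound of $\BL$. Concretely, the delicate points are controlling the heights of the auxiliary groups appearing in the recursion — the kernel $\BL\cap\SO(Y^{\perp})$ and the $Y$-factor of $\BL$ — which requires effectivity of the decompositions of algebraic groups into ideals/factors. By contrast, the $\BM_L$-module analysis in the structural step and the final passage from $\|v\|$ to $\height(\BM_v)$ are routine.
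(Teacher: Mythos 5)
Your structural step is correct and is in fact a nice, self-contained argument that replaces the paper's appeal to the classification result from the paper's reference on integral points on quadrics (used in the paper's treatment of the semisimple, finite-centralizer, irreducible case). The key observation — that $\wedge^2 L^\perp\subseteq\mathfrak l$ forces $\mathfrak l$ to split as $(\mathfrak l\cap\wedge^2 L)\oplus(W\otimes L^\perp)\oplus\wedge^2 L^\perp$ by isotypic considerations, so that $W=L$ would give $L\otimes L^\perp\subseteq\mathfrak l$ and then $\wedge^2 L\subseteq\mathfrak l$ by the bracket $[l\otimes v,l'\otimes v]=-Q(v,v)\,l\wedge l'$ — does work, and it is more elementary than what the paper does. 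The paper instead reduces via Lemma~\ref{lem:exhaustiveexp} to the three cases (parabolic, centralizer of a $\Q$-torus, semisimple with finite centralizer), rules out the parabolic case by anisotropy of the centralizer, and in the other two cases lands immediately on a \emph{fixed-point space} (of $\BT$, or of the normal subgroup generated by conjugates of $\BM_L$); that fixed space is cut out by equations whose coefficients are visibly of size $\ll\height(\BL)^\star$, and Siegel's lemma finishes.

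This difference in structure is exactly where your proposal runs into the difficulty you yourself flag. The invariant subspace $L_1=M^\perp\cap L$ you produce is not a priori a fixed-point space, so there is no direct way to read off $\height(L_1)\ll\height(\BL)^\star$: the equations cutting out $M^\perp$ involve $L^\perp$, which is exactly the uncontrolled object. Your recursion on $Y$ is meant to fix this, but I do not see that it closes. In the case $\mathfrak a_Y\cdot Y=Y$ with $\dim Y\ge3$, you want to bound $\height(Y)$ by the height of a ``$Y$-factor'' of $\BL$. But the ideal of $\mathfrak l^{ss}$ complementary to $\ker(\mathfrak l\to\mathfrak{so}(Y))$, while of controlled height by Lemma~\ref{lem:normalsubgroups}, need not lie in $\mathfrak{so}(Y)$: it sits diagonally in $\mathfrak{so}(Y)\oplus\mathfrak{so}(Y^\perp)$, so $\mathfrak a_Y$ is its uncontrolled projection and $Y=\mathfrak a_Y\cdot\Q^n$ does not inherit a height bound. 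The fallback ``recursion inside $\SO(Y^\perp)$'' changes the ambient group to $\SO(Y^\perp)$, whose own height is governed by $\height(Y)$ — again the quantity you are trying to bound — so the recursion is circular as written. The paper's route via fixed-point spaces avoids this entirely, and I would regard the height bookkeeping as a genuine gap in the proposal rather than a routine matter to be filled in.

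Two smaller remarks. Your justification that $\dim Y\ge3$ when $\mathfrak a_Y\neq0$ should not invoke ``$\BL\in\classH$ has no nontrivial $\Q$-characters'' (an anisotropic torus like $\SO(Y)$ for $\dim Y=2$ has no $\Q$-characters either); the correct reason is that $\BL$ has unipotent radical, so both its semisimple part and its unipotent radical map trivially to a torus. Also, the passage from $\|v\|\ll T^\star$ to $\height(\BM_v)\ll T^\star$ and the verification that $\BM_v$ lies in $\classH$ (hence in $\classH'_L$) are routine, as you say, and match what the paper does.
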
 

We will use the following lemma.

\begin{lemma}[Heights of normal subgroups]\label{lem:normalsubgroups}
Let $\BM < \SL_{\dimemb}$ be a semisimple $\Q$-subgroup. Then $\height(\BM') \ll_{\dimemb} \height(\BM)^\star$ for any normal $\Q$-subgroup $\BM' \triangleleft \BM$.
\end{lemma}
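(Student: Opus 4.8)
The plan is to reduce to bounding the height of each $\Q$-almost-simple factor of $\BM$, and then to extract those factors effectively from the structure constants of $\mathfrak{m} := \mathrm{Lie}(\BM)$, which are themselves polynomially bounded in $\height(\BM)$. Since $\height(\BM')$ depends only on $\mathrm{Lie}(\BM')$ we may assume $\BM'$ connected, hence an almost-direct product $\BM'=\BM_{i_1}\cdots\BM_{i_k}$ of some of the $\Q$-almost-simple factors $\BM_1,\dots,\BM_r$ of $\BM$ ($r\le\dim(\BM)$), with $\mathrm{Lie}(\BM')=\bigoplus_j\mathrm{Lie}(\BM_{i_j})$ and each $\mathfrak{m}_i:=\mathrm{Lie}(\BM_i)$ a minimal ideal of $\mathfrak{m}$. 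Then $\wedgevec{\BM_{i_1}}\wedge\cdots\wedge\wedgevec{\BM_{i_k}}$ is a nonzero integral vector on the line $\bigwedge^{\dim(\BM')}\mathrm{Lie}(\BM')$, so Hadamard's inequality gives $\height(\BM')\le\prod_j\height(\BM_{i_j})\le(\max_i\height(\BM_i))^{\dim(\BM)}$. It therefore suffices to bound each $\height(\BM_i)$; as $\mathfrak{m}_i$ is $\Q$-rational, the lattice $\mathfrak{m}_i\cap\mathfrak{sl}_\dimemb(\Z)$ is saturated and its Plücker vector is $\pm\wedgevec{\BM_i}$, so $\height(\BM_i)$ equals its covolume.

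Next I would make the bracket on $\mathfrak{m}$ explicit. From $\wedgevec{\BM}$ one reads off linear forms of complexity $\ll\height(\BM)^\star$ cutting out $\mathfrak{m}$ in $\mathfrak{sl}_\dimemb$; since the lattice $\mathfrak{m}\cap\mathfrak{sl}_\dimemb(\Z)$ has covolume $\height(\BM)$ and every successive minimum $\gg 1$, the geometry of numbers produces a $\Z$-basis $e_1,\dots,e_m$ of it with $\norm{e_l}\ll\height(\BM)^\star$. As $[\mathfrak{sl}_\dimemb(\Z),\mathfrak{sl}_\dimemb(\Z)]\subseteq\mathfrak{sl}_\dimemb(\Z)$ the structure constants in this basis are integers, and quasi-orthogonality of a reduced basis bounds them by $\ll\norm{e_j}\norm{e_k}\ll\height(\BM)^\star$; in particular the matrices $\mathrm{ad}(e_1),\dots,\mathrm{ad}(e_m)$ have integer entries $\ll\height(\BM)^\star$ in this basis.

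The remaining point, which I expect to be the only real difficulty, is to make the decomposition $\mathfrak{m}=\bigoplus_i\mathfrak{m}_i$ effective. Here I would use that the $\mathfrak{m}_i$ are pairwise non-isomorphic $\Q$-irreducible submodules of the adjoint representation of $\mathfrak{m}$ (the kernel of the action on $\mathfrak{m}_i$ is $\bigoplus_{j\ne i}\mathfrak{m}_j$), so $\mathfrak{m}=\bigoplus_i\mathfrak{m}_i$ is the isotypic decomposition and $\mathfrak{m}_i=\epsilon_i\mathfrak{m}$ for the primitive idempotents $\epsilon_i$ of the commutant algebra $\mathcal{C}=\{A\in\mathrm{End}(\mathfrak{m}):[A,\mathrm{ad}(e_j)]=0\text{ for all }j\}$, a commutative étale $\Q$-algebra of dimension $\le m$. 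Solving the (integer, bounded-complexity) linear system for $\mathcal{C}$ via Siegel's lemma yields an integral basis of the order $\mathcal{C}\cap M_m(\Z)$ of complexity $\ll\height(\BM)^\star$; a suitable integer combination $A$ of these basis elements, with coefficients bounded in terms of $m$ alone, generates $\mathcal{C}$ over $\Q$, its minimal polynomial $\mu_A\in\Z[X]$ has coefficients $\ll\height(\BM)^\star$, and by Mignotte's bound its monic $\Q$-irreducible factors $p_i$ also have height $\ll\height(\BM)^\star$; then $\epsilon_i=q_i(A)$, where $q_i\in\Q[X]$ is the Chinese Remainder interpolant with $q_i\equiv\delta_{ij}\pmod{p_j}$, is a rational matrix of complexity $\ll\height(\BM)^\star$. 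Finally $\mathfrak{m}_i=\ker(\mathrm{id}-\epsilon_i)$, so its integral points form the kernel lattice of an integer matrix of complexity $\ll\height(\BM)^\star$ in the coordinates $e_1,\dots,e_m$; this lattice has a $\Z$-basis of coordinate-size $\ll\height(\BM)^\star$, hence of norm $\ll\max_l\norm{e_l}\cdot\height(\BM)^\star\ll\height(\BM)^\star$, so its covolume $\height(\BM_i)$ is $\ll\height(\BM)^\star$. Together with the first paragraph this proves the lemma. If one prefers to avoid spelling out this effective linear algebra, the last step could instead be deduced from the effective algebraic geometry in \cite[\S4]{effectiveavoidance}.
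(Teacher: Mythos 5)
Your argument is correct and yields the stated bound, but the technical route differs somewhat from the paper's. Both proofs isolate the $\Q$-simple ideals of $\mathfrak{m}$ via the commutant of the adjoint action. The paper realizes this commutant as a $\Q$-torus $\BT$ (the centralizer of $\Ad(\BM)$ in $\SL(\mathfrak{m})$), takes an integral basis $X_1,\dots,X_t$ of $\Lie(\BT)$ of norm $\ll\height(\BM)^\star$, and writes each $\Q$-simple ideal as the common kernel of $\prod_{\alpha'}(X_i-\alpha'(X_i))$, the product running over a Galois orbit of weights; since the $\alpha'(X_i)$ are algebraic integers of controlled size, these are integral equations of complexity $\ll\height(\BM)^\star$. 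You instead treat the commutant as a commutative \'etale $\Q$-algebra $\mathcal{C}\subset\mathrm{End}(\mathfrak{m})$, pick a small generator, factor its minimal polynomial over $\Q$ (Mignotte), and build the primitive idempotents by Chinese Remainder interpolation. These are two descriptions of the same underlying algebra ($\Lie(\BT)\subset\mathcal{C}$, and the Galois orbits of weights correspond exactly to your $\Q$-irreducible factors $p_i$), so the proofs run closely in parallel; the paper's Galois-orbit phrasing is shorter, while yours avoids Galois theory at the cost of more explicit effective linear algebra. Your opening Hadamard step makes precise what the paper compresses to the phrase ``taking direct sums.'' The one point that deserves a word is your claim that some bounded integer combination of a $\Z$-basis of $\mathcal{C}\cap\Mat_m(\Z)$ generates $\mathcal{C}$ over $\Q$: one can note that generating is equivalent to the characteristic polynomial of multiplication by the element being squarefree, whose discriminant is a nonzero polynomial of degree $O(m^2)$ in the combination's coefficients, so a choice with coefficients $O_m(1)$ lies off its zero locus.
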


\begin{proof}
We realize any $\Q$-simple Lie ideal $\Fm' \triangleleft \Fm$ by integral linear equations with controlled coefficients.
To that end, let $\BT$ be the centralizer of $\Ad(\BM)$ in $\SL(\Fm)$ where we use coordinates induced by a basis of $\Fm$ of integral vectors of norm $\ll \height(\Fm)^\star$ (which exists by Siegel's lemma).
Any Lie ideal $\Fm' \triangleleft \Fm$ (defined over $\overline{\Q}$) is $\BT$-invariant and, 
if $\Fm'$ is absolutely simple, the action of $\BT$ on $\Fm'$ is by scalars (in particular, $\BT$ is a $\Q$-torus). 
Conversely, if $\Fm_\alpha$ is a weight space for a weight $\alpha$ of $\Ft$, then $\Fm_\alpha$ is an absolutely simple ideal.

Let $\mathcal{A}$ be the set of weights of the $\Ft$-representation on $\Fm$.
The absolute Galois group $\mathcal{G} = \Gal(\overline{\Q}/\Q)$ acts on $\mathcal{A}$ and we set $\Fm_{\mathcal{G}.\alpha} = \bigoplus_{\alpha' \in \mathcal{G}.\alpha}\Fm_{\alpha'}$.
By construction, $\Fm_{\mathcal{G}.\alpha}\subset \Fm$ is a $\Q$-simple Lie ideal  and any $\Q$-simple Lie ideal is of this form.

Observe that $\height(\BT) \ll \height(\BM)^\star$ and fix an integral basis $X_1,\ldots, X_t$ of $\Lie(\BT)$ with $\norm{X_i}\ll \height(\BM)^\star$.
For any $\alpha\in \mathcal{A}$ the eigenvalue $\alpha(X_i)$ of $X_i$ is an algebraic integer and satisfies $|\alpha(X_i)|\ll \norm{X_i}^\star \ll \height(\BM)^\star$.
By construction,
\begin{align*}
\Fm_{\mathcal{G}.\alpha} = \Big\{v \in \Fm: \prod_{\alpha' \in \mathcal{G}.\alpha}(X_i-\alpha'(X_i))v = 0 \text{ for all }i\Big\}
\end{align*}
and so any $\Q$-simple ideal is defined by integral linear equations with coefficients of size $\ll \height(\BM)^\star$.
Taking direct sums the same is true of any ideal defined over $\Q$.
This implies the lemma.
\end{proof}

\begin{proof}
Let $\BM<\G$ be a $\Q$-group of class $\classH$ containing $\BM_L$. 
If $n =4$, $\BM_L$ is maximal and $\BM_L = \BM$ i.e.~there is nothing to prove. 
So we assume $n \geq 5$ throughout and, in particular, $\G = \SO_Q$ is absolutely almost simple.
By Lemma~\ref{lem:exhaustiveexp} we may assume that $\BM$ is a subgroup as in (i)--(iii) of that lemma.

We begin first by observing that (i) cannot occur. Indeed, the centralizer of $\BM_L$ is equal to $\BM_{L^\perp} \simeq \SO_{Q|_L}$ (up to finite index) and hence $\Q$-anisotropic (as $L$ is assumed positive definite).

Suppose now that $\BM$ is the centralizer of a $\Q$-torus $\BT$ as in (ii). In particular, $\BT$ commutes with $\BM_L$ which implies that $\BT$ preserves $L$ and the orthogonal complement $L^\perp$. In fact, since $\BM_L$ acts irreducibly on $L^\perp$, $\BT$ fixes $L^\perp$ pointwise. The subspace
\begin{align*}
V = \{v \in \Q^n: t.v= v\text{ for all } t \in \BT(\Q)\}
\end{align*}
thus contains $L^\perp$ and the restriction of $Q$ to $V^\perp \subset L$ is positive-definite over $\R$. 
Since $\height(\BT) \ll \height(\BM)^\star$, $V$ and $V^\perp$ are defined by integral linear equations with coefficients of size $\ll\height(\BM)^\star$. 
In particular, by Siegel's lemma there exists a nonzero integral vector $v\in V^\perp \subset L$ with $\norm{v}\ll \height(\BM)^\star$.
By construction, $\height(\BM_v)\ll \height(\BM)^\star$ and $\BM_v \supset \BM_L$ which proves the lemma in this case.

Lastly, suppose that $\BM\supset \BM_L$ is semisimple with finite centralizer as in (iii). The following argument roughly follows \cite[\S2.6]{localglobalEV}.
Let $\BM'\subset \BM$ be the subgroup generated by all $\BM$-conjugates of $\BM_L$. It is a normal subgroup defined over $\Q$ and hence $\height(\BM') \ll \height(\BM)^\star$ by Lemma~\ref{lem:normalsubgroups}.
We may assume henceforth that $\BM = \BM'$.
Assume first that $\BM$ does not act irreducibly in the standard representation. 
If $V$ is a nontrivial $\BM$-invariant subspace and $m \in \BM(\Q)$, then it is also $m\BM_L m^{-1}= \BM_{m.L}$-invariant.
In particular, either $V \subset m.L$ or $V \supset (m.L)^\perp$.
As $\BM$ is connected, one of these options occurs for all $m \in \BM(\Q)$; after taking orthogonal complements we may assume the first.
Thus, $V \subset V' = \bigcap_{m \in \BM(\Q)} m.L$. 
Hence, the right-hand side $V'$ is a nontrivial $\BM$-invariant subspace and 
\begin{align*}
V' = \{v \in \Q^n: m.v = v \text{ for all }m \in \BM(\Q)\}.
\end{align*}
Hence the argument follows as in the previous case.

Lastly, assume that $\BM$ acts irreducibly.
The argument in \cite[\S2.6]{localglobalEV} implies that $\BM = \G$ and the proposition follows.
\end{proof}

\begin{proof}[Proof of Theorem~\ref{thm:effequi orth}]
The proof follows directly from Proposition~\ref{prop:exhaustive orth}, Theorem~\ref{thm:exhaustive}, and the fact that $\minht(xH) \ll 1$.
The latter follows from compactness of the centralizer by work of Dani and Margulis \cite{DaniMargulis91} (see also \cite[Cor.~6.4]{effectiveavoidance}).
\end{proof}

\bibliographystyle{amsplain}
\bibliography{Bibliography}

\end{document}